\def\@email#1#2{%
 \endgroup
 \patchcmd{\titleblock@produce}
  {\frontmatter@RRAPformat}
  {\frontmatter@RRAPformat{\produce@RRAP{*#1\href{mailto:#2}{#2}}}\frontmatter@RRAPformat}
  {}{}
}%
\newtheorem{thm}{Theorem}[section]
\newtheorem{lemma}[thm]{Lemma}
\newtheorem{prop}[thm]{Proposition}
\def\C{{\mathbb C}}
\def\del{\delta}
\def\sign{{\rm sign}}
\def\less{\lesssim}
\def\la{\langle}
\def\ran{\rangle}
\def\ra{\rangle}
\def\Re{{\,\rm Re\,}}
\def\Im{{\,\rm Im\,}}
\def\Ai{{\rm Ai}}
\def\Bi{{\rm Bi}}
\def\eps{{\varepsilon}}
\def\les{\lesssim}
\def\calM{{\mathcal M}}
\def\Rd{{\mathbb R}}
\def\nn{\nonumber}
\def\calH{\mathcal H}
\def\del{\partial}
\def\prefac{\frac{i}{\hbar}}
\def\snabla{ {\slash\!\!\! \nabla} }
\def\calE{{\mathcal E}}
\def\Ran{\mathrm{Ran}}
\def\mc{\mathcal}
\def\Z{{\mathbb Z}}
\def\snabla{ {\slash\!\!\! \nabla} }
\def\Id{\mathrm{Id}}
\def\calB{\mathcal{B}}
\def\calM{\mathcal{M}}
\def\calA{\mathcal{A}}
\newcommand{\oast}{\circledast}
\def\calD{\mathcal{D}}
\begin{document}

\title[Dispersive decay of  waves]{On pointwise decay of waves}

\author{W. Schlag}
\affiliation{Yale University, Department of Mathematics,
10 Hillhouse Avenue, New Haven, CT 06511, U.S.A.}
\email{wilhelm.schlag@yale.edu}

\begin{abstract} 
This note introduces some of the basic mechanisms relating the behavior of the spectral measure of 
Schr\"odinger operators near zero energy 
to the long-term decay and dispersion of the associated Schr\"odinger and wave evolutions. 
These principles are illustrated by means of  the author's work  on decay of Schr\"odinger and wave equations
under various types of perturbations including those of the underlying metric. In particular, we
consider  local decay of solutions to the
linear Schr\"odinger and wave equations on curved backgrounds which exhibit trapping. A particular application
 are waves on a Schwarzschild black hole space-time. We elaborate on Price's
law of local decay which accelerates with the angular momentum, which has recently been settled by 
Hintz, also in the much more difficult Kerr black hole setting. 
While the author's work on the same topic was conducted ten years ago, the global semiclassical  representation techniques  developed there have recently been applied by Krieger, 
Miao, and the author to the nonlinear problem of stability of blowup solutions to critical wave maps under non-equivariant perturbations. 
\end{abstract}

\date{\today}

\maketitle

\section{Introduction}\label{sec:Prelim}

This note mainly serves as an introduction to  the  techniques used in the papers~\cite{DSS1}, \cite{DSS2},
 which are concerned with the local decay of waves on a Schwarzschild background. The decay estimates are obtained by separation of variables and 
 the  analysis of the flow for each angular momentum $\ell$ in~\cite{DSS1}. By means of a  semiclassical WKB  analysis in the parameter $\hbar:=\ell^{-1}$ carried out by means of a global Liouville-Green transform, as well as semiclassical Mourre theory at energies near the top of the barrier, \cite{DSS2} 
sums up  these fibered  estimates 
over all angular momenta incurring the loss of finitely many angular derivatives.  
Note that \cite{DSS1}, \cite{DSS2} are not entirely self-contained and rely in part on  \cite{SSS1}, \cite{SSS2}, 
\cite{DS}, \cite{CSST}, \cite{CDST}.  As shown in these references, the Schr\"odinger flow can be analyzed analogously. The original motivation for \cite{SSS1, SSS2}
was to study the long-term dispersive behavior of solutions to 
Schr\"odinger and wave equations on specific non-compact manifolds exhibiting closed geodesics, such as the hyperboloid of one sheet. 
In analogy with the unique periodic geodesic on such a hyperboloid, which is exponentially unstable, the surface of closed geodesics
around a Schwarzschild black hole is known as {\em photon sphere} and corresponds to the collection of all periodic light rays. The photon sphere is also unstable. 

Recently, in joint work with Krieger and Miao~\cite{KMS}, the  semiclassical techniques leading to a precise  representation of the resolvent and the spectral measure  for all energies and all small $\hbar$ developed in~\cite{CSST,CDST}, played a crucial role in a nonlinear asymptotic stability question of blowup solutions to energy critical wave maps into the $2$-sphere. In stark contrast to the linear case, modes of fixed frequencies interact through the nonlinearities. Controlling these interaction naturally leads to a paradifferential calculus involving several simultaneous semi-classical parameters. 
The nonlinear work~\cite{KMS} served as the main motivation for writing this note, which should not be mistaken for a general review. Numerous references are missing,  which touch in one way or another on the ensuing discussion. A survey of dispersive decay of Schr\"odinger, wave, and Klein-Gordon evolutions involving electric, magnetic, and metric perturbations, including the semi-classical and gravitational literature, would require many hundreds of citations.  The scope and purpose of this communication is much more limited. For example, magnetic and time-dependent potentials  are not discussed in detail. 

The author's investigations in this area where largely motivated by Bourgain's book~\cite{Bour} which states at the end of page~27:  {\it On the other hand, it would be most interesting to prove that analogue of (1.99) in low dimensions $d=1,2$. This is certainly a project of independent importance.} Here (1.99) refers to the pointwise decay of the Schr\"odinger evolution proved by Journ\'e, Soffer and Sogge~\cite{JSS}, see the following section.  

\section{Lower order perturbations}\label{sec:low}

The free Schr\"odinger evolution $\psi(t)=e^{-it\Delta}\psi_0$ in $\Rd^{d+1}_{t,x}$ satisfies the basic estimates
\begin{align}
\|\psi(t)\|_{H^s} &= \|\psi_0\|_{H^s} \label{eq:schL2}\\
\|\psi(t)\|_\infty &\le Ct^{-\frac{d}{2}} \|\psi_0\|_1 \label{eq:sch_disp}
\end{align}
as can be seen from the representation
\begin{align*}
\psi(t,x) &= (2\pi)^{-d} \int_{\Rd^d} e^{i(t |\xi|^2+x\cdot\xi)} \hat{f}(\xi)\, d\xi \\
&= c(d) t^{-\frac{d}{2}} \int_{\Rd^d} e^{i\frac{|x-y|^2}{4t}} f(y)\, dy
\end{align*}
respectively. For the wave equation $\Box u=\del_t^2 u - \Delta u=0$ in $d+1$ dimensions one has constancy of the
energy
\begin{equation}\label{eq:wave_energy}
\calE(u) = \|\nabla u\|_2^2 + \|\del_t u\|_2^2
\end{equation}
as well as the dispersive decay
\begin{equation}\label{eq:wave_disp}
\|u(t)\|_{\infty} \les t^{-\frac{d-1}{2}} (\|u(0)\|_{\dot B_{1,1}^{\frac{d+1}{2}}} + \|\partial_t u(0)\|_{\dot B_{1,1}^{\frac{d-1}{2}} })
\end{equation}
where $\dot B_{1,1}^\alpha$ stands for the usual Besov space: $\|f\|_{\dot B^\alpha_{1,1}}=\sum_{j\in\Z} 2^{\alpha j} \|P_j f\|_1$ where $P_j$ is
the Littlewood-Paley projection onto frequencies of size~$2^j$.
In odd spatial dimensions one can improve the right-hand side to
\[
\|u(0)\|_{\dot W^{\frac{d+1}{2},1}}  + \|\partial_t u(0)\|_{\dot W^{\frac{d-1}{2},1}}
\]
where $\dot W^{\alpha,p}$ stands for the homogeneous Sobolev spaces.
To obtain~\eqref{eq:wave_disp}, one considers a fixed frequency shell $\{|\xi|\sim 2^j\}$ and rescales
to~$j=0$. Then
\[
e^{it\sqrt{-\Delta}}P_0  f(x) = \int_{\Rd^{2d}} e^{i((x-y)\cdot\xi+t|\xi|)} \chi(\xi)\,d\xi \, f(y)\, dy
\]
where $\chi$ is a cut-off function corresponding to~$P_0$. Passing to polar coordinates and applying
stationary phase to integrals over spheres then yields the desired $t^{-\frac{d-1}{2}}$ decay.

While \eqref{eq:schL2} and~\eqref{eq:wave_energy} are a result of the time-translation
invariance of the underlying Lagrangians (via Noether's theorem) and therefore robust
under perturbations that preserve this symmetry, \eqref{eq:sch_disp} and~\eqref{eq:wave_disp} follow
from the form of the fundamental solutions and are therefore less stable.
In fact, much effort has been devoted to deriving similar dispersive estimates for perturbations of the free
Schr\"odinger and wave equations in the past thirty years.  The starting point in these investigations was
to consider {\em local} decay estimates which are quite different from the global ones as
in~\eqref{eq:sch_disp} and~\eqref{eq:wave_disp} (as we shall see below). {\em Local} here refers to the fact that the decay is measured only in weighted spaces rather than
in a uniform sense.  

\subsection{Local decay for $-\Delta+V$}\label{subsec:local}

\subsubsection{The Schr\"odinger evolution}
In~\cite{JK} Jensen and Kato showed that for $H=-\Delta+V$ in the three-dimensional case, with real-valued $V$ which  is bounded
and decays at a sufficient polynomial rate one has the local decay
\begin{equation}
\label{eq:JK}
\| \la x\ra^{-\sigma} e^{itH} P_c f\|_{L^2(\Rd^3)} \les \la t\ra^{-\frac32} \|\la y\ran^\sigma f\|_{L^2(\Rd^3)}
\end{equation}
for some $\sigma>0$ and with $P_c=\chi_{(0,\infty)}(H)$ the projection onto the continuous spectrum.
Moreover, one needs to assume that zero energy is neither an eigenvalue nor a resonance of~$H$ (which is also referred to as zero
energy being regular, the other case being singular).

This latter property refers to the validity of the resolvent estimate 
\begin{eqnarray}
\sup_{\Im z>0} \|\la x\ra^{-\sigma} (-\Delta +V+z)^{-1} \la x\rangle^{-\sigma} \|_{2\to2} <\infty
\end{eqnarray}
with $\sigma>0$ sufficiently large. Alternatively, it is the same as the nonexistence of $f\not\equiv0$ with
\begin{eqnarray}
\label{eq:res}
Hf=0,\quad f\in\bigcap_{\eps>0} L^{2,-\frac12-\eps}(\Rd^3)
\end{eqnarray}
It was already observed by Rauch~\cite{Rauch} for exponentially decaying potentials,  that a zero energy resonance or eigenvalue, i.e.,
in the case when~\eqref{eq:res} admits a nontrivial solution, destroys the dispersive estimate. More specifically, one loses
one power of~$t$ in the decay law in that case.

To see the relevance of zero energy resonances, we expand the resolvent for $z\to0$ in $\Im z>0$ as follows:
\begin{align}\label{eq:laurent}
R(z) &:=(-\Delta+V+z)^{-1} \\
&= z^{-1} B_{-1} + z^{-\frac12} B_{-\frac12} + B_0 + z^{\frac12} B_{\frac12} + \rho(z)\nn
\end{align}
where $B_{-1},\ldots, B_1$ are bounded in weighted $L^2(\Rd^3)$-spaces, and with \[\| \la x\rangle^{-\sigma} \rho(z) f\|_2 \les |z|\|\la x\rangle^{\sigma} f\|_2\]
for small $z$.
Clearly, $B_{-1}$ is the orthogonal projection onto the zero eigenspace, and zero energy is regular 
for $H$ iff $B_{-1}=B_{-\frac12}=0$. In general, $B_{-1}, B_{-\frac12}$ are of finite rank.
As an example, consider the case $V=0$ in three dimensions for which one has (with $z=-\zeta^2)$
\[
(-\Delta-\zeta^2)^{-1}(x,y) = \frac{e^{i\zeta|x-y|}}{4\pi |x-y|},\quad \Im \zeta>0
\]
and the Laurent expansion \eqref{eq:laurent} is now obtained by Taylor expanding the exponential on the right-hand side.
It follows that zero energy is neither an eigenvalue nor a resonance in that case. In contrast, the one-dimensional case satisfies
\[
(-\Delta-\zeta^2)^{-1}(x,y) = \frac{e^{i\zeta|x-y|}}{2i\zeta},\quad \Im \zeta>0
\]
and zero is a resonance (but not an eigenvalue). We  used here that~\eqref{eq:laurent} remains correct in all {\em odd}
dimensions, whereas in even dimensions a logarithm appears. Indeed, the free resolvent in $d$-dimensions satisfies
\begin{equation}\label{eq:resolvent}
(-\Delta-\zeta^2)^{-1}(x,y) = c_d\, \zeta^{\frac{d-2}{2}} |x-y|^{-\frac{d-2}{2}} H_{\frac{d-2}{2}}^{+}(\zeta|x-y|)
\end{equation}
and the Hankel functions of integer order exhibit a logarithmic branch point at zero.

To pass to estimates on the evolution one now uses the Laplace transform (as in the Hille-Yosida theorem) to conclude that
\begin{equation}\label{eq:laplace}
e^{itH}P_c = \frac{1}{2\pi } \int_{p_0-i\infty}^{p_0+i\infty} e^{tp} R(ip) P_c\, dp
\end{equation}
where $p_0>0$ is arbitrary. Assuming for simplicity that $V$ is compactly supported it follows from the resolvent identity
that the Green function $R(ip)(x,y)$
admits a meromorphic continuation to the left-half plane.
\begin{figure*}[ht]
\begin{center}
 \centerline{\hbox{\vbox{ \epsfxsize= 8.0 truecm \epsfysize=5.5
 truecm \epsfbox {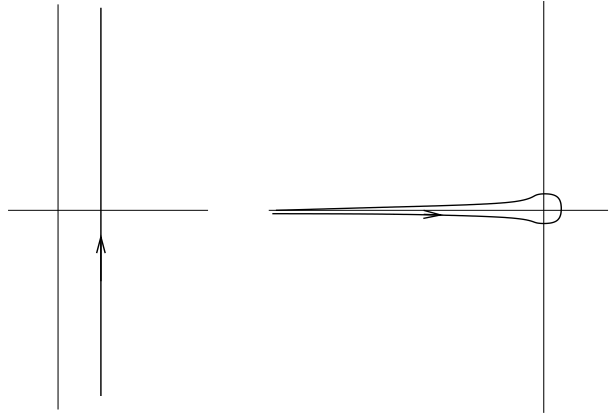}}}} \caption{Deforming the contour}
\end{center}
\end{figure*}
One now deforms the contour in~\eqref{eq:laplace} as shown
in Figure~1. The finitely many residues $\{\zeta_j\}$ of the resolvent in the left-half plane (which lie in $\C\setminus(-\infty,0]$) contribute
the exponentially decaying expression
\[
\sum_{\zeta_j} e^{\zeta_j t}  P_{\phi_j}
\]
where $P_{\phi_j}$ is the projection onto the resonant states corresponding to the complex resonance at $\zeta_j$ (the resonant states are commonly referred to as {\it meta-stable states} or {\it  quasinormal modes}). The more slowly decaying tail is a result of the branching of the resolvent at $p=0$. More specifically,
it can be read off from~\eqref{eq:laurent} via the following standard result which is known as Watson's lemma (the notation $\sim$ refers to 
asymptotic expansions in the sense of Poincar\'e).

\begin{lemma}
\label{lem:watson}
Let $f$ be a complex-valued function of a real variable $x$ such that
\begin{itemize}
\item $ f$ is continuous on $(0,\infty)$
\item  $f(x)\sim \sum_{n=0}^\infty a_n\, x^{\lambda_n-1} \qquad\text{\ \ as\ \ } x\to0+$ with $0<\lambda_0<\lambda_1<\ldots$.
\item $f(x)=O(e^{cx})$ as $x\to\infty$ for some $c>0$
\end{itemize}
This condition can be removed since Watson's lemma is really local on some interval $(0,x_0)$, 
but we choose to state it in this global form.
Then  for every small $\delta>0$ one has
\[
\int_0^\infty e^{-xp} f(x)\, dx \sim \sum_{n=0}^\infty \frac{a_n}{p^{\lambda_n}} \Gamma(\lambda_n)\]
as $|p|\to\infty$ in $|\arg(p)|\le \frac{\pi}{2}-\delta$.
\end{lemma}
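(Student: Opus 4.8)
The plan is to prove Watson's lemma by splitting the integral into a neighborhood of the origin, where the asymptotic expansion of $f$ is available, and the complementary region, where the exponential bound makes the contribution negligible. Fix $\delta>0$ and suppose $|\arg p|\le \frac{\pi}{2}-\delta$, so that $\Re p \ge |p|\sin\delta$. First I would choose $x_0>0$ small enough that the asymptotic relation $f(x)=\sum_{n=0}^{N} a_n x^{\lambda_n-1} + O(x^{\lambda_{N+1}-1})$ holds uniformly on $(0,x_0)$ for each fixed $N$; this is exactly what the Poincar\'e asymptotic hypothesis provides. Then write
\[
\int_0^\infty e^{-xp} f(x)\, dx = \int_0^{x_0} e^{-xp} f(x)\, dx + \int_{x_0}^\infty e^{-xp} f(x)\, dx .
\]

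For the tail, the hypothesis $f(x)=O(e^{cx})$ gives $\left|\int_{x_0}^\infty e^{-xp}f(x)\,dx\right| \les \int_{x_0}^\infty e^{-(\Re p - c)x}\,dx \les e^{-(|p|\sin\delta - c)x_0}$, which decays exponentially in $|p|$ and is therefore $o(p^{-\lambda_n})$ for every $n$; hence it does not affect the asymptotic expansion. For the main term, substitute the finite Taylor approximation of $f$ and handle each piece via the exact identity $\int_0^{\infty} e^{-xp} x^{\lambda_n-1}\, dx = \Gamma(\lambda_n)\, p^{-\lambda_n}$, valid for $\Re p>0$ by analytic continuation from the positive real axis. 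The difference between $\int_0^{x_0}$ and $\int_0^\infty$ of $e^{-xp}x^{\lambda_n-1}$ is again an exponentially small tail, controlled exactly as above. Finally, the remainder term contributes $\int_0^{x_0} e^{-xp}\, O(x^{\lambda_{N+1}-1})\, dx \les \int_0^\infty e^{-\Re p\, x} x^{\lambda_{N+1}-1}\,dx \les (\Re p)^{-\lambda_{N+1}} \les |p|^{-\lambda_{N+1}}$, using $\Re p \ge |p|\sin\delta$; since this is $O(p^{-\lambda_{N+1}})$ and $N$ is arbitrary, collecting the terms yields $\int_0^\infty e^{-xp} f(x)\,dx \sim \sum_{n\ge 0} a_n \Gamma(\lambda_n)\, p^{-\lambda_n}$ in the stated sector.

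The only genuinely delicate point is keeping the implied constants uniform over the sector $|\arg p|\le\frac{\pi}{2}-\delta$ rather than merely along the real axis: this is where the factor $\sin\delta$ enters, converting $\Re p$-decay into $|p|$-decay, and it is the reason the result fails on the full half-plane $\Re p\ge 0$. Everything else is routine: the integrability near $x=0$ is guaranteed by $\lambda_0>0$, and the termwise Laplace transform of the power $x^{\lambda_n-1}$ is a standard Gamma-function computation. I would also remark, as the statement already anticipates, that the global exponential bound on $f$ can be dropped — one may replace $f$ by $f\cdot\chi_{(0,x_0)}$ and absorb the rest into an exponentially small error — so the lemma is effectively a local statement about the behavior of $f$ near the origin.
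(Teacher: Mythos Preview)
Your proof is correct and is the standard textbook argument for Watson's lemma. The paper does not actually supply a proof of this lemma; it is quoted as a well-known result, so there is nothing to compare against beyond noting that your splitting-and-Gamma-function approach is precisely the classical one.
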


Therefore,  if $B_{-\frac12}\ne0$ in~\eqref{eq:laurent}, then one obtains $t^{-\frac12}$ {\em local} decay,
whereas otherwise the rate is $t^{-\frac32}$ which is the same as in~\eqref{eq:sch_disp}.  Evidently,
the global (i.e., $L^\infty$) decay can never be faster than the local one --- whence the need to exclude zero energy
resonance and eigenvalues to preserve~\eqref{eq:sch_disp}. We remark that one can have $B_{-\frac12}\ne0$ even in case
the only solutions to~\eqref{eq:res} are in~$L^2$ (in other words, if zero energy is an eigenvalue but not a resonance).
This implies that $t^{-\frac32}$ does not result from applying~$P_c$ to the evolution even when zero is not a resonance
but only an eigenvalue.

Starting from the spectral representation
\begin{equation}
\label{eq:evolution}
e^{itH}P_c =  \int_0^\infty e^{it\lambda}\, E(d\lambda)
\end{equation}
instead of \eqref{eq:laplace} with the spectral measure
\[
E(d\lambda)=\frac{1}{2\pi i}[R(\lambda+i0)-R(\lambda-i0)]P_c\,d\lambda
\]
Jensen and Kato derive local decay estimates but under much less severe restrictions on the decay of $V$
and also on the notion of {\em locality} in the decay estimate. However, it is clear from~\eqref{eq:evolution} that
the main issue here is once again the contributions from $\lambda=0$ coming from~\eqref{eq:laurent}.
Indeed, for energies $\lambda>\lambda_0>0$ where $\lambda_0>0$ is arbitrary but fixed, one has the so-called {\em limiting absorption}
resolvent bounds
\[
\sup_{\lambda>\lambda_0} \big\| \la \cdot\rangle^{-\sigma}\del_\lambda^k R(\lambda\pm i0)  \la \cdot\rangle^{-\sigma} \big\| <\infty
\]
for all $0\le k\le k_0$ and with $\sigma>0$ depending on~$k$ (the value of $k_0$ here depends on the decay of~$V$).  These bounds allow one to integrate by parts in~\eqref{eq:evolution} in the range $\lambda>\lambda_0$ which leads to arbitrary decay in time.

The most general results on local decay for the Schr\"odinger evolution
 were obtained by Murata~\cite{Mur}. He derived expansions in time for evolutions $e^{itH}$ in {\em all dimensions} and with
elliptic  $H=-p(D)+V$ where $V$ is a compact operator in suitable weighted Sobolev spaces. As a general rule, the coefficients
in these expansions corresponding to nongeneric threshold behavior (i.e., slow decay resulting from threshold eigenvalues or resonances)
are {\em finite rank} operators which can be computed in terms of the eigenfunctions and resonant states. As an example, the
one-dimensional free evolution satisfies
\begin{align*}
e^{-it\partial_x^2} f(x) &= ct^{-\frac12} \int f(y)\, dy + \rho(t) f(x), \\
\| \la \cdot\rangle^{-\sigma} \rho(t)f\|_2& \les t^{-\frac32}\|\la \cdot \rangle^\sigma f\|_2
\end{align*}
The appearance of the projection $f\mapsto \int f(y)\, dy$ onto the constant functions is natural in view of the fact that
the resonant function of $-\partial_x^2$ at zero energy is $f\equiv1$.
This also shows that one should expect $t^{-\frac32}$ local decay for one-dimensional operators without zero energy resonance (note that, however, the {\em global} decay as in~\eqref{eq:sch_disp} is never faster than~$t^{-\frac12}$ if $d=1$), at least assuming sufficient decay of~$V$.
This is indeed the case, see~\cite{Mur}. In two dimensions, Murata obtained the faster local $L^2(\Rd^2)$ decay  $t^{-1}\log^{-2} t$ for
operators without resonance. Erdogan, Green~\cite{EG2} established the more difficult sharp weighted $L^1\to L^\infty$ version of these global bounds in $\Rd^2$ assuming that $0$ energy is regular. These faster local decays (as compared to the global $L^\infty$ decay) play a crucial role in certain
applications to nonlinear stability results, see   Buslaev, Perelman~\cite{BP1}, Krieger, Schlag~\cite{KS1} for the
 one-dimensional case,  and Kirr, Zarnescu~\cite{KZ2} for examples of two-dimensional applications. Loosely speaking, the point here is that in contrast
 to the global decay rates these faster non-resonant local rates are {\em integrable in time} which allows one to close certain bootstrap
 arguments involving the Duhamel formula.

\subsubsection{The wave evolution}
Similar considerations apply to the wave equation. Indeed, let $\Box u=0$, with $(u(0),\partial_t u(0))=(0,g)$ (initial data $(f,0)$ are
then handled by differentiating in time). Then instead
of~\eqref{eq:laplace} one has
\begin{equation}
\label{eq:wave}
u(t)=\frac{\sin(t\sqrt{H})}{\sqrt{H}} P_c g = \frac{1}{2\pi i} \int_{p_0-i\infty}^{p_0+i\infty} e^{t p} R(p^2) P_c\,g dp
\end{equation}
where $p_0>0$. In contrast to the Schr\"odinger case the resolvent $R(p^2)$ in {\em odd dimensions} is now analytic around $p=0$ (assuming
that there is no zero energy resonance or eigenvalue) which results in {\em arbitrary local decay} of $u(t)$. More precisely,
if $V$ decays exponentially, thus allowing for analytic continuation of the Green function to the left-half plane, one obtains exponential
decay in time relative to weighted $L^2$ in space. 
This is of course a consequence of the {\em sharp Huyghens principle} in odd dimensions which states that the fundamental solution
of the free wave equation is localized to a sphere with radius given by the time. We see from this informal discussion that this principle
is robust under perturbations (at least in the sense that the perturbed wave $u(t)$ will decay very rapidly at distances $\ll t$ from the origin, which of course is far from being able to describe the fundamental solution).  Note the stark contrast between
the strong local decay of the wave equation as compared to the specific global decay given by~\eqref{eq:wave_disp}.

On the other hand, in even dimensions the resolvent will exhibit a $\log p$ singularity, see~\eqref{eq:resolvent}. Due to this branching
of the resolvent at $p=0$, Watson's lemma implies an explicit power law depending on the dimension governing the tail of the wave near the origin. This is in agreement with the fact that there is no sharp Huyghens principle in even dimensions.

To summarize this section, one sees that the  {\em local decay} for both the Schr\"odinger and  the wave equation is entirely determined by the singularity (often  but
not necessarily by branching) of the resolvent $(-\Delta+V+z)^{-1}$ at $p=0$ where $z=-ip$ in the former case, and $z=p^2$ in the latter case, respectively.

\subsection{Global decay for $-\Delta+V$}\label{sec:global}
\subsubsection{The Schr\"odinger evolution}
The first result which proved \eqref{eq:sch_disp} for $H=-\Delta+V$ in dimensions $d\ge3$ was obtained by Journ\'e, Soffer, and Sogge~\cite{JSS}. Following unpublished work by Ginibre, we now give a short proof of a simpler estimate, namely
\begin{equation}
\label{eq:gin}
\|e^{itH} P_c f\|_{L^\infty+L^2(\Rd^d)} \les \la t\rangle^{-\frac{d}{2}} \|f\|_{L^1\cap L^2(\Rd^d)}
\end{equation}
assuming that $V$ has sufficient decay and that $H$ has no zero energy eigenvalue or resonance. The logic here is that
the Duhamel formula allows one to upgrade local decay to global one. More precisely, if
\[
\| \la x\ra^{-\sigma} e^{itH} P_c f\|_{L^2(\Rd^d)} \les \la t\ra^{-\frac{d}{2}} \|\la y\ran^\sigma f\|_{L^2(\Rd^d)}
\]
and if $V$ decays sufficiently fast, then the same estimate holds without weights in the sense of~\eqref{eq:gin} (provided $d>2$).
More precisely, applying the Duhamel formula twice yields
\begin{align*}
e^{itH}P_c &= e^{-it\Delta} P_c + i\int_0^t e^{-i(t-s)\Delta} V e^{isH} P_c\, ds \\
&= e^{-it\Delta} P_c + i\int_0^t e^{-i(t-s)\Delta} V P_c e^{-is\Delta} \, ds\\
&\qquad  + \int_0^t \int_0^s e^{-i(t-s)\Delta} V e^{i(s-s')H} P_c V e^{-is'\Delta} \, ds'\, ds
\end{align*}
Applying the local decay for $e^{isH}$ from the previous section (with $|V|^{\frac12}$ acting as weight, say) as well as the bound
\[
\|e^{-it\Delta} f\|_{L^2+L^\infty(\Rd^d)} \les \la t\rangle^{-\frac{d}{2}} \|f\|_{L^1\cap L^2(\Rd^d)}
\]
to this expression yields for  $\|f\|_{L^1\cap L^2(\Rd^d)}=1$
\begin{align*}
& \|e^{itH}P_c f\|_{L^\infty+L^2(\Rd^d)} \les \la t\rangle^{-\frac{d}{2}}    + \int_0^t \la t-s\rangle^{-\frac{d}{2}}  \la s\rangle^{-\frac{d}{2}}\, ds\\
& + \int_0^t \int_0^s \la t-s\rangle^{-\frac{d}{2}} \la s-s'\rangle^{-\frac{d}{2}}   \la s'\rangle^{-\frac{d}{2}}  \, ds'\, ds
\les \la t\rangle^{-\frac{d}{2}}
\end{align*}
as claimed provided $d\ge3$. The main gist of~\cite{JSS} is now to remove the $L^2$-piece from this argument. This is subtle, as the free estimate involved $(t-s)^{-\frac{d}{2}}$ which is not integrable at $s=t$. To overcome this difficulty, Journ\'e,
Soffer, and Sogge used the bound
\[
\sup_{1\le p\le \infty}\| e^{-it\Delta} V e^{it\Delta} \|_{p\to p} \le \|\hat{V}\|_1
\]
The point here is that the left-hand side for $V=e^{ix\eta}$ is a translation operator composed with a unimodular factor and therefore
$L^p$ bounded. 

Rodnianski and the author~\cite{RS}  proved that for all $t>0$ 
\begin{equation}\label{eq:32tdec}
\|e^{itH}f\|_{L^\infty(\Rd^3)}\le C(V) t^{-\frac32}\|f\|_{L^1(\Rd^3)}
\end{equation} 
assuming that
\begin{equation}\label{eq:kato_norm}
\sup_{x\in\Rd^3} \int_{\Rd^3} \frac{|V(y)|}{|x-y|}\, dy < 4\pi
\end{equation}
as well as that the so-called Rollnick norm of~$V$ is less than $4\pi$. 
The left-hand side in~\eqref{eq:kato_norm} is commonly referred to as the {\em Kato norm} $\|\cdot\|_{K}$.
The Rollnick condition precludes any spectral problems such as eigenvalues and a zero energy singularity.
The approach of~\cite{RS} to the pointwise bounds is based on an expansion into an infinite Born series followed by term-wise estimation
of the resulting kernels. The smallness condition on $V$ guarantees convergence. 

Remarkably, Beceanu and Goldberg~\cite{BecGol} were able to show that the  finiteness of the Kato norm alone suffices. More precisely, they showed that~\eqref{eq:32tdec} holds for $e^{itH}P_c$ in three dimensions assuming~\eqref{eq:kato_norm} with $4\pi$ replaced by~$\infty$ and  that there are no imbedded eigenvalues and resonances in the continuous spectrum. 
They accomplished this by means of Beceanu's Wiener algebra techniques, see~\cite{Bec1}.  
Recall that Wiener's classical theorem  states that for any $f\in L^1(\Rd)$ the equation $(\delta_0+f)\ast (\delta_0+g)=\delta_0$ has a (unique) solution with $g\in L^1(\Rd)$ if and only if $1+\hat{f}\ne 0$ on $\Rd$.  The relevance of this to the decay of solutions to 
\[
(i\partial_t -\Delta +V)\psi =F,\quad \psi(0)=\psi_0
\]
 can be seen as follows: let $V_1V_2=V$, $|V_1|=|V_2|$ and set
\[
(T_{V_2, V_1} F)(t) = \int_{0}^t V_2 e^{i(t-s)H_0} V_1 F(s)\, ds
\]
with  $H_0=-\Delta$. 
Then, on the one hand, one has 
\[
V_2 \psi(t) = (\delta_0\Id -i T_{V_2, V_1})^{-1} V_2\big(e^{itH_0} \psi_0 - i\int_0^t e^{i(t-s)H_0} F(s)\, ds \big) 
\]
which is to be interpreted in the convolution algebra $\calB(L^2(\Rd^3), \calM_t L^2(\Rd^3))$  where $\calM_t$ are the complex measures on the line. 
On the other hand, 
$
\widehat{T_{V_2, V_1} }(\lambda) = iV_2 R_0^{-}(\lambda) V_1$ with $ R_0^{-}(\lambda) = (H_0 - (\lambda-i0))^{-1}$. 
Hence, the invertibility of $\delta_0\Id-i T_{V_2, V_1}$ in $\calB(L^2(\Rd^3), \calM_t L^2(\Rd^3))$ is the same as the pointwise invertibility of the Birman-Schwinger operator
$\Id +V_2 R_0^{-}(\lambda) V_1$. This equivalence is delicate, and requires $V\in L^{\frac32,1}(\Rd^3)$ the Lorentz space, whence $V_1, V_2\in L^{3,2}(\Rd^3)$,  and also the Keel-Tao Strichartz endpoint~\cite{KT}. For the abstract Wiener theorem in this context, see \cite[Theorem~1.1]{Bec1} and \cite[Theorem 3]{BecGol}.

An alternative  and very general approach to proving $L^p$ bounds on both wave and Schr\"odinger evolutions was found by Yajima~\cite{Y1}, \cite{Y2} who proved $L^p$ boundedness of the
wave operators, with the limit being taken in the strong $L^2$-sense,
\begin{equation}\label{eq:moller}
W = \lim_{t\to\infty} e^{-itH} e^{-it\Delta}
\end{equation}
for all $1\le p\le\infty$ and $d\ge3$.  The fact that
these operators exist and are isometries $L^2\to \Ran(P_c(H))$ is a classical fact, see  Kato~\cite{Kato}.
They intertwine the free evolution with that of~$H$ in the sense that (with $H_0=-\Delta$)
\[
f(H)P_c(H) = W f(H_0) W^*
\]
for any Borel function $f$ on~$\Rd$. In particular, $e^{itH}P_c(H)=  W e^{itH_0}  W^*$ and~\eqref{eq:sch_disp} therefore implies the bound
\[
\| e^{itH} P_c f\|_\infty \le Ct^{-\frac{d}{2}} \|f\|_1
\]
whenever $W:L^\infty\to L^\infty$, $W^*:L^1\to L^1$.  Yajima obtains similar results on $W^{k,p}$ assuming more regularity on~$V$
(the amount of regularity depends on~$k$).
In view of our discussions of the role of zero energy resonances for local decay, it follows
that Yajima's result~\cite{Y1} can only hold under the assumption that zero energy is neither a resonance nor an eigenvalue.
In three dimensions~\cite{Y1} requires $|V(x)|\les \la x\rangle^{-\sigma}$ with $\sigma>5$ and therefore improves on~\cite{JSS}.

Yajima derives his $L^p$ bounds by means of a finite Born series expansion with a remainder term involving the perturbed resolvent. 
In case of small potentials, one can sum up the infinite Born expansion leading to more precise results in terms of conditions on~$V$. 
In view of the preceding discussion of Wiener theorems as a means of summing divergent series,  it is natural to ask if Yajima's theorem could be approached
by means of a suitable Wiener algebra. 
Beceanu and the author~\cite{BecSch} carried this out and proved that  the wave operators given by~\eqref{eq:moller} in $\Rd^3$ are superpositions of reflections and translations. 
In fact, 
 assuming that $|V(x)|\les C\langle x\rangle^{-\frac52-\epsilon}
$, and that $0$ energy is neither an eigenvalue nor a resonance, 
they showed that there exists  $g(x, y, \omega) \in L^1_\omega \mc M_y L^\infty_x$ (with $\mc M_y$ being finite Borel measures in $y$), i.e., 
\[
  \int_{\mathbb{S}^2} \|g(x, dy, \omega)\|_{\mathcal{M}_y L^{\infty}_x} \, d \omega < \infty
\]
such that for $f \in L^2(\Rd^3)$ one has the representation formula for the wave operator
\[
(W f)(x) = f(x) + \int_{\mathbb{S}^2} \int_{\Rd^3} g(x, dy, \omega) f(S_\omega x - y)  \, d \omega.
\]
where $S_{\omega}x=x-2(x\cdot\omega)\omega$ is a reflection.  
This of course implies that $W:X\to X$ is bounded for any function space $X$ on $\Rd^3$ with a norm which is invariant under translations and reflections. 
The proof of this representation formula in~\cite{BecSch} is not entirely straightforward. On the one hand, the algebra to which the Wiener theorem is applied
is somewhat delicate and requires casting the finite order Born series terms in Yajima's work~\cite{Y1} (which involve only finitely many potentials and free resolvents) in 
some iterative algebraic framework. In other words, one needs to find the correct algebra $\calA$ and composition law $\oast$ as well as operator $T$ to write the third Born term, say, in the form $T\oast T\oast T$ in~$\calA$. Furthermore, the classical scattering theory
based on weighted $L^2$ spaces does not suffice and it is necessary to invoke the the  author's work with Ionescu~\cite{IS}, which revisits the classical Agmon-Kato-Kuroda theorem 
in the context of Fourier restriction and the Stein-Tomas theorem, as well as the
Keel-Tao endpoint~\cite{KT}. This in turn relies on the Carleman theorems and absence of imbedded eigenvalues obtained in~\cite{IJ}. 
It is not known whether a structure theorem  holds under a scaling invariant assumption on $V$,
 see however~\cite{BecSch2} for such a result, albeit   involving small scaling-invariant potentials. 
 
In higher dimensions, it turns out that one  needs to assume some regularity of~$V$ in order for the expected $L^1(\Rd^d)\to L^\infty(\Rd^d)$ bounds to hold. Indeed, Goldberg, Visan~\cite{GVis} show
that the dispersive bound can fail in dimensions $d>3$ for potentials that belong to the class $C^{\frac{d-3}{2}-}(\Rd^d)$.
The logic here is that the free resolvent takes the form (in odd dimensions)
\begin{equation}\label{eq:rlamb}
(-\Delta-\lambda^2+i0)^{-1}(r)= \frac{e^{i\lambda r}}{r^{d-2}} \sum_{j=0}^{\frac{d-3}{2}} c_j\, (\lambda r)^j
\end{equation}
and the highest power $\lambda^{\frac{d-3}{2}}$ here corresponds to a $\frac{d-3}{2}$ derivative loss on~$V$. In the positive direction,
Erdo\smash{\u{g}}an and Green~\cite{EG} prove the dispersive bound in dimensions $d=5,7$ assuming that $V\in  C^{\frac{d-3}{2}}(\Rd^d)$ (zero energy
resonances cannot arise in dimensions $d\ge5$).

The case of low dimensions $d=1$ and $d=2$ always requires a separate analysis since the free resolvent in those cases
exhibits a zero energy singularity (more precisely, there is a zero energy resonance given by the constant state $f=1$).
We refer the reader to \cite{GS1},  Weder~\cite{Weder1},  d'Ancona, Fanelli~\cite{DaFa1} for the one-dimensional case, and~\cite{Sch1}
for dispersive estimates for the two-dimensional case provided zero energy is regular. Erdogan and Green~\cite{ErdGreen} carried out a more complete analysis 
of the dispersive decay in $\Rd^2$ allowing for $s$ and $p$-wave resonances at zero energy. This classification refers to nonzero solutions $\psi$ of $H\psi=0$
which  (i) are asymptotic to a nonzero constant at spatial $\infty$ for $s$-waves (ii) are in $L^q(\Rd^2)$ for all $q>2$ for $p$-waves. 
They showed that the $s$-wave resonance, which arises in the $V=0$ case, leads to the same $t^{-1}$ decay as in the free evolution, whereas the $p$-wave destroys this rate of decay. With Goldberg these authors also obtained such a classification in~$\Rd^4$. Finally, more recently Erdo\smash{\u{g}}an, Green, and Toprak have applied spectral methods to analyze the delicate dispersive decay of the Dirac operator, see~\cite{EGT}.

\subsubsection{The wave equation}
Starting with Beals and Strauss~\cite{BS}, \cite{Bea} many authors have considered the problem of proving the dispersive estimate~\eqref{eq:wave_disp} 
for equations $(\Box + V)u=0$, $(u,\partial_t u)(0)=(f,g)$ (it will suffice to set $f=0$). In~\cite{BS} and~\cite{Bea} the potential
is assumed to be either nonnegative or small (which excludes any spectral problems),
 as well as rapidly decaying and smooth. The result is of the form~\eqref{eq:wave_disp} but with slightly more derivatives on the data.
 Georgiev, Visciglia~\cite{GV} assume that $0\le V\le \la x\rangle^{-2-\epsilon}$ in three dimensions and obtain~\eqref{eq:wave_disp} for
  energies away from zero as well as Strichartz estimates for all energies.
Cuccagna~\cite{Cuc1} proves Strichartz estimates in three dimensions assuming that $|\partial^\alpha V(x)|\les \la x\rangle^{-3-\epsilon}$
for $|\alpha|\le 2$ and that zero energy is regular.
D'Ancona and Pierfelice~\cite{DAnPier} prove global dispersive~\eqref{eq:wave_disp} for $d=3$ assuming that $\|\min(V,0)\|_{K}<2\pi$
  but with $\dot B^1_{1,1}$ on the right-hand side.
Pierfelice~\cite{Pie} obtains the same result under the smallness assumption~\eqref{eq:kato_norm} (the arguments in~\cite{KS3} yield the same but with $\|\nabla g\|_1$ instead of the Besov norm).
D'Ancona, Fanelli~\cite{DaFa2} consider the wave and Dirac equations in three dimensions
\begin{align*}
u_{tt} - (\nabla + iA)^2 u + V u &=0\\
iU_t -{\mathcal D} U + M U &= 0
\end{align*}
respectively. Assuming smallness of $A,V,M$ but allowing nearly scaling-invariant singularities
of these functions both at zero and infinity (which are $|x|^{-1}$, $|x|^{-2}$, and $|x|^{-\frac12}$, respectively) the   $t^{-1}$
global decay is obtained but for data in weighted Soboloev and Besov spaces.
By the aforementioned results of Yajima et al.\ on the $W^{k,p}$-boundedness of the wave
operators one can obtain $L^p$ decay  estimates  for the wave equation from the free estimates~\eqref{eq:wave_disp}. 
Note that the Besov spaces are then defined relative to $H$ rather than the free Laplacian, but it is often possible to pass between the two. 
For a more recent reference on the integrated decay of waves, which also allows for magnetic perturbations, see d'Ancona's work~\cite{DAn}. 

\subsubsection{The case of singular zero energy}

Certain stability problems in physics lead to linear operators with a zero energy eigenvalue or resonance.
Examples are the energy critical wave equation $\Box u - u^5=0$ in $\Rd^{1+3}$ which admits the stationary solutions $W_\lambda(x):=\lambda (1+\lambda^2 |x|^2/3)^{-\frac12}$ for $\lambda>0$. Linearizing around
$W_\lambda$ leads to $H=-\Delta - 5W_\lambda^4$ which has $\partial_\lambda W_\lambda$ as a
resonant mode of zero energy. Another example is the critical Yang-Mills problem in dimensions $4+1$. 
It is therefore necessary to obtain dispersive bounds in this context as well. Note that the local decay of Section~\ref{subsec:local}
easily allows for this as the asymptotic expansions in time (as derived in~\cite{Mur}, \cite{JK} for example)  isolate the contributions of the threshold singularities and identifies them as being of finite rank. In case of $L^1\to L^\infty$ this required some additional work, see
\cite{ES1}, \cite{ES2}, and~\cite{Y4} for the case of the Schr\"odinger evolution. Yajima~\cite{Y4} obtains explicit  expressions for the  term $Bt^{-\frac12}$ which needs to be subtracted to obtain
the $t^{-\frac32}$ decay of the bulk ({\it explicit} here means that $B$ can be computed from the zero energy and resonance states). The wave equation in three dimensions is analyzed  in~\cite{KS3}. We recall the main linear result from the latter reference.

\begin{prop}
\label{prop:infty_decay} Assume that $V$ is a real-valued potential
such that  $|V(x)|\less \la x\ra^{-\kappa}$ where $\kappa>3$ is fixed but arbitrary.
If zero energy is regular for $H$, then
\[ \Big\| \frac{\sin(t\sqrt{H})}{\sqrt{H}} P_c f \Big\|_\infty \less t^{-1} \|f\|_{W^{1,1}(\Rd^3)}
\]
for all $t>0$.
Now assume that zero is a resonance  but not an
eigenvalue of $H=-\Delta+V$. Let $\psi$ be the unique resonance
function normalized so that $\int V\psi(x)\,dx=1$. Then there exists
a constant $c_0\ne0$ such that
\begin{equation}
\label{eq:1inf} \Big\| \frac{\sin(t\sqrt{H})}{\sqrt{H}} P_c f - c_0
(\psi\otimes \psi) f\Big\|_\infty \less t^{-1} \|f\|_{W^{1,1}(\Rd^3)}
\end{equation}
for all $t>0$.
\end{prop}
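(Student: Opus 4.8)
\emph{Proof strategy, following the approach of \cite{KS3}.} By Stone's formula together with the change of variables $E=\lambda^{2}$, the representation \eqref{eq:wave} becomes the oscillatory integral
\[
\frac{\sin(t\sqrt H)}{\sqrt H}\,P_c f=\frac{1}{\pi i}\int_0^\infty \sin(t\lambda)\,\big[R_V^{+}(\lambda)-R_V^{-}(\lambda)\big]f\,d\lambda ,\qquad R_V^{\pm}(\lambda):=(H-\lambda^{2}\mp i0)^{-1},
\]
so the problem is governed by the boundary values of the resolvent and by oscillation in $\lambda$. Fix $\chi\in C_c^\infty$ with $\chi\equiv1$ near $0$ and $\mathrm{supp}\,\chi$ small, and split the integral into a low-energy piece (with $\chi$) and a high-energy piece (with $1-\chi$). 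The high-energy piece is handled in the usual way and is identical in both cases: iterating $R_V=R_0-R_0VR_V$ produces a finite Born series $R_V=\sum_{k=0}^{N}(-R_0V)^kR_0+(-R_0V)^{N+1}R_V$, and since in $\Rd^3$ the kernel of $R_0(\lambda^2\pm i0)$ is $e^{\pm i\lambda|x-y|}/(4\pi|x-y|)$, each Born kernel is an iterated spatial convolution with phase $\lambda\sum_j|x_j-x_{j+1}|$; inserting it into the $\lambda$-integral and applying a van der Corput/stationary-phase estimate gives decay at least $t^{-1}$, the spatial integrals converging because $\kappa>3$, one free factor absorbing the single derivative (hence the $W^{1,1}$ norm). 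The remainder is controlled by the limiting absorption bounds $\|\la x\ra^{-\sigma}\del_\lambda^j R_V^{\pm}(\lambda)\la x\ra^{-\sigma}\|\les\la\lambda\ra^{-1-j}$ for $\lambda\gtrsim1$ after choosing $N$ large in terms of $\kappa$.

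For the low-energy piece I would use the symmetric resolvent identity: writing $v=|V|^{1/2}$, $U=\sign V$, $M(\lambda)=U+vR_0(\lambda^2+i0)v$, one has $R_V^{+}(\lambda)=R_0(\lambda^2+i0)-R_0(\lambda^2+i0)\,v\,M(\lambda)^{-1}\,v\,R_0(\lambda^2+i0)$, and similarly with $-i0$. In $\Rd^3$ the expansion $R_0(\lambda^2+i0)(x,y)=\frac{1}{4\pi|x-y|}+\frac{i\lambda}{4\pi}+O(\lambda^2)$ gives $M(\lambda)=M_0+\frac{i\lambda}{4\pi}\,v\otimes v+O(\lambda^2)$ with $M_0=U+vR_0(0)v$, the expansion being valid in the weighted spaces to the order needed, which is where $\kappa>3$ enters. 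In the regular case $M_0$ is invertible, so $M(\lambda)^{-1}$ is a smooth family of bounded operators near $\lambda=0$; feeding this back, expanding the outer free resolvents in $\lambda$, and writing out the kernels, the low-energy contribution is a sum of oscillatory integrals $\int \sin(t\lambda)\,\lambda^{m}e^{\pm i\lambda(\cdots)}\chi(\lambda)K(x,y,\cdots)\,d\lambda$ with $m\ge0$; integration by parts in $\lambda$ (no boundary term, by $\chi$ and $\sin$) together with the oscillatory-integral estimate gives $t^{-1}$, the spatial weights produced by $\del_\lambda$ being absorbed by the decay of $V$. Combined with the high-energy bound, this is the first assertion.

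In the resonant but not eigenvalue case, $M_0$ has a one-dimensional kernel spanned by $\phi:=vU\psi$, where $\psi=-R_0(0)v\phi$ is the resonance function and $\la v,\phi\ra=\int V\psi\,dx=1$ by the chosen normalization — so the rank-one perturbation $\frac{i\lambda}{4\pi}\,v\otimes v$ does not annihilate $\ker M_0$. I would invert $M(\lambda)$ near $0$ by a Feshbach/Schur-complement (Jensen--Nenciu) argument: projecting onto $\ker M_0$ and its complement and inverting the nondegenerate block, the remaining scalar Schur complement equals $\frac{i\lambda}{4\pi}\la v,\phi\ra^{2}+O(\lambda^2)$, whence $M(\lambda)^{-1}=\frac{4\pi}{i\lambda}\,\phi\otimes\phi+O(1)$ as $\lambda\to0$, the $O(1)$ part smooth. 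Substituting back and using $R_0(0)v\phi=-\psi$ shows that the singular part of $R_V^{\pm}(\lambda)$ near $\lambda=0$ is $\mp\frac{4\pi}{i\lambda}\,\psi\otimes\psi$, hence $R_V^{+}(\lambda)-R_V^{-}(\lambda)=-\frac{8\pi}{i\lambda}\,\psi\otimes\psi+O(1)$. Inserting this into the low-energy $\lambda$-integral, the singular term contributes
\[
\frac{1}{\pi i}\Big(\int_0^\infty \frac{\sin(t\lambda)}{\lambda}\,\chi(\lambda)\,d\lambda\Big)\,(-8\pi)\,(\psi\otimes\psi)\,f ,
\]
and since $\int_0^\infty \frac{\sin(t\lambda)}{\lambda}\,\chi(\lambda)\,d\lambda=\frac{\pi}{2}+O(t^{-1})$ (the error is $\int_0^\infty \sin(t\lambda)\tfrac{\chi(\lambda)-1}{\lambda}\,d\lambda$, which is $O(t^{-1})$ by one integration by parts), this produces the time-independent operator $c_0\,\psi\otimes\psi$, with $c_0\ne0$ computed explicitly from these constants; all other contributions — the $O(1)$ corrections to $M(\lambda)^{-1}$, the $\lambda$-corrections to the outer free resolvents, and the high-energy piece — are $O(t^{-1})$ exactly as in the regular case. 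Note that $\psi\in L^\infty(\Rd^3)$ (it is bounded and decays like $|x|^{-1}$), so $\psi\otimes\psi:W^{1,1}\to L^\infty$ is bounded and the subtraction in \eqref{eq:1inf} is meaningful.

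The main obstacle is twofold. First, the whole argument has to be run at the level of pointwise kernel bounds to land in $L^1\to L^\infty$ (with one derivative) rather than in the softer weighted-$L^2$ spaces of Jensen--Kato; this forces a careful oscillatory-integral lemma for integrals of the type $\int e^{it\lambda}e^{i\lambda a}\lambda^{m}\chi(\lambda)\,d\lambda$ uniform in the spatial parameter $a$, together with a bookkeeping of the weights that $\del_\lambda$ generates against the decay of $V$. Second, in the resonant case one is faced with a genuine double low-frequency singularity — the $1/\lambda$ of $M(\lambda)^{-1}$ multiplied by the $1/\lambda$ already present in the wave propagator $\sin(t\lambda)/\lambda$ — so some care is needed to verify that, once the stationary term $c_0\,\psi\otimes\psi$ has been removed, the remainder decays exactly like $t^{-1}$ and no slower, and to pin down the coefficient $c_0$ and its nonvanishing.
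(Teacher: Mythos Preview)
The paper does not supply a proof of this proposition; it is merely quoted from \cite{KS3} (the sentence introducing it reads ``We recall the main linear result from the latter reference''). So there is no in-paper argument to compare against, only the original source.

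Your outline is the correct one and is precisely the strategy carried out in \cite{KS3}: spectral representation via Stone's formula, high/low energy split, a finite Born expansion plus limiting absorption at high energies, and at low energies the symmetric resolvent identity with the expansion of $M(\lambda)=U+vR_0(\lambda^2+i0)v$ near $\lambda=0$. In the regular case $M_0$ is invertible on $L^2$ and the argument proceeds as you describe. In the resonant (non-eigenvalue) case the Jensen--Nenciu/Feshbach inversion you sketch is exactly what is done there; your identification $\phi=Uv\psi$, $\langle v,\phi\rangle=\int V\psi=1$, and the resulting $M(\lambda)^{-1}=\frac{4\pi}{i\lambda}\,\phi\otimes\phi+O(1)$ (the $\|\phi\|^2$ factors cancel between the Schur complement and the projection, as you implicitly use) are correct, and the extraction of the constant term $c_0\,\psi\otimes\psi$ via $\int_0^\infty\frac{\sin(t\lambda)}{\lambda}\chi(\lambda)\,d\lambda=\frac{\pi}{2}+O(t^{-1})$ is the right mechanism.

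Two small points worth tightening if you write this out in full. First, the singular term is not literally $\mp\frac{4\pi}{i\lambda}\psi\otimes\psi$ but rather $-R_0^\pm(\lambda)v\cdot\frac{4\pi}{i\lambda}\phi\otimes\phi\cdot vR_0^\pm(\lambda)$, and the $\lambda$-dependence of the outer $R_0^\pm(\lambda)$ (through the phases $e^{\pm i\lambda|x-y|}$) must be expanded as well; you acknowledge this under ``the $\lambda$-corrections to the outer free resolvents'', but in practice this is where most of the kernel bookkeeping in \cite{KS3} sits, since those corrections bring in spatial growth that must be offset by the decay of $V$ and by the $W^{1,1}$ norm. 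Second, the high-energy Born remainder in \cite{KS3} is handled by pointwise kernel bounds rather than weighted-$L^2$ limiting absorption, since the target is $L^1\to L^\infty$; your remark that ``the whole argument has to be run at the level of pointwise kernel bounds'' is exactly the point, and the oscillatory-integral lemma you allude to (uniform in the spatial phase $a$) is Lemma~2 of \cite{KS3}.
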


Several results exist on the  boundedness of the wave operators on $L^p$ in case zero energy is singular. However, they are limited to 
a smaller range of $p$ (in $d=3$ one needs $\frac32<p<3$), and are less useful for nonlinear applications, at least in three dimensions. On the other hand, in $\Rd^2$, Erdogan, Goldberg and Green~\cite{EGG} showed that the wave operators remain bounded in the full range $1<p<\infty$ if $0$ energy exhibits  only an $s$-wave resonance or only a zero energy eigenvalue. 

For the Klein-Gordon equation on the line with a non-generic decaying potential (i.e., the associated Schr\"odinger operator 
exhibits a zero energy resonance), an analogue of Proposition~\ref{prop:infty_decay} was obtained in~\cite{LLSS},  albeit for local decay. This is part of a larger body of work aiming at
understanding kink stability.

\section{Metric perturbations}\label{sec:metric}

If one  replaces  $-\Delta$ by the elliptic operator $H:=-\sum_{j,k=1}^d\partial_j (a_{jk}(x) \partial_k)$ then one encounters a new
obstruction to proving decay estimates in addition to the zero energy resonance or eigenvalue of Section~\ref{sec:low}: the phenomenon of trapping,
which is a {\em large energy problem}.
Trapping refers to the possibility that the classical Hamiltonian
\[
h(x,\xi) :=   \frac12\sum_{j,k=1}^d  a_{jk}(x) \xi^k \xi^j
\]
exhibits closed trajectories. More precisely, assuming symmetry $a_{jk}=a_{kj}$ one has the Hamiltonian equations
\[
\dot x :=  \sum_{j=1}^d  a_{j\cdot}(x) \xi^j, \qquad \dot \xi =  \frac12\sum_{j,k=1}^d  \nabla_x a_{jk}(x) \xi^k \xi^j
\]
which might exhibit time-periodic trajectories. 
To understand the
crucial effect of the existence of closed geodesics, we consider the
method of proving decay estimates using energy estimates:
$$
\frac{d}{dt} \langle u,A(t)u\rangle= \langle u,i[H,A(t)]u\rangle+\langle u,\frac{\partial
A(t)}{\partial t}u \rangle 
$$
where $u=u(x,t)$ is the solution of the Schr\"odinger equation, with
Hamiltonian $H$. A similar identity can be applied for the wave
equation, see~\cite {BSof}. Next, suppose the expectation of $A(t)$ is
bounded from above, uniformly in $t$, by $\| u\|_2$ and moreover, that the commutator is positive, in
the sense that
$$
i[H,A(t)]+\frac{\partial A(t)}{\partial t} \ge \theta B^{\star}B
$$
for some $\theta>0$ and some operator $B$. Upon integration
over time, we obtain an integrated decay estimate for $B$:
$$
\int_0^{\infty} \|Bu\|^2\, dt \leq c \|u(0)\|_2^2 
$$
The operator family $A(t)$ is variably called a multiplier, or a
propagation observable, or an escape function, or conjugate operator. 

To illustrate this further,  let $h(x,\xi)$ be a classical Hamiltonian on $\Rd^{2d}$.
If $(x(t),\xi(t))$ is an orbit under the Hamiltonian flow of~$h$, then
\[
\frac{d}{dt} a(x(t),\xi(t)) = \{ h, a\}(x(t),\xi(t))
\]
where the right-hand side is the Poisson bracket.
For the Euclidean case, i.e., $h(x,\xi)=\frac12\xi^2 + V(x)$ one can take $a(x,\xi)=x\cdot\xi=\{h,\frac12 |x|^2\}$ which gives $\{ h,a\}=2h - 2V - x\cdot\nabla V$.
Now suppose that $- 2V(x) - x\cdot\nabla V(x)\ge0$ for $|x|\ge R>0$, say.
Since $h$ is conserved, we conclude that a trajectory with $h=\alpha>0$ which remains in $|x|\ge R$ satisfies
\[
\frac{d^2}{dt^2} \frac12 |x(t)|^2 \ge 2\alpha
\]
and therefore $|x(t)|$ grows  linearly in~$t$.  This indicates that $(x(t),\dot x(t))$ undergoes scattering like a free particle. Under a short-range condition 
on $V(x)$, i.e., $|V(x)|\le C\langle x\rangle^{-1-\epsilon}$ this is indeed the case; i.e., all trajectories which are not trapped are asymptotically free.  See 
the book by Derezinski and Gerard~\cite{DerGer} for a systematic development of these
techniques in both classical and quantum mechanics.

Positive commutator methods  are
also used to prove refined average decay estimates which hold on subsets of the
phase space. Such estimates for the wave and Schr\"odinger equation
were first derived by Morawetz, using the radial derivative operator
 and the generator of the conformal group as multipliers.
 These multipliers also work if repulsive interactions are added.
However,  modifications are needed if trapped geodesics are present,
and usually only lead to weaker estimates. A major step in this
direction is the use of a {\em sharp localization of the energy}, due to
Mourre~\cite{Mourre}. The energy estimate can be obtained by taking
the derivative with respect to time of the expectation value of some
operator, also called propagation observable as in Sigal, Soffer~\cite{SSof1}. 
The remarkable paper by Hunziker, Sigal, and Soffer~\cite{HSS} presents a time-dependent approach
to Mourre theory based on the {\em commutator expansion lemma} of Sigal and Soffer. The latter refers to expressing 
$[f(A),B]$ through a   series of Taylor type involving higher-order commutators between $A$ and~$B$. 

A parallel
development to this approach was based on $\Psi$DO methods. In this approach one constructs a
function on the phase space which has positive Poisson bracket
 with the principal symbol of the Hamiltonian.
Then, one uses the quantized symbol of this function as a
propagation observable, and by means of $\Psi$DO theory, and in particular,
Garding's inequality,  passes to the desired smoothing (or limiting absorption) bound.
 Some of the earliest implementations of this approach are \cite{Colin},
\cite{Doi} and since then a vast literature has developed in this direction. 

The importance of a 
nontrapping condition is readily understood: it allows for the construction of monotonic
propagation observables, globally in the phase space. In the
presence of closed trajectories this is not possible. However, when
the trajectories are closed but (strongly) unstable, there is now substantial evidence that the decay estimates continue 
to hold in some sense. 

On the level of the resolvents, one considers $(H-z)^{-1}$ with $H$ a variable coefficient operator as above, with $a_{ij}$ a short-range perturbation of
 a constant elliptic symbol. Furthermore, assume that all classical Hamiltonian orbits of large initial velocity
are not trapped.  Then the limiting absorption
principle
\begin{equation}
 \sup_{\Im z>0,\Re z\ge N}\|\la \cdot\ran^{-\sigma} (H-z)^{-1} \chi_{I}(H) \la \cdot \ran^{-\sigma}\|_{2\to2} <\infty
\label{eq:lim_apMur}
\end{equation}
holds with $N>0$ and $\sigma>0$ sufficiently large, see Murata~\cite{Mur1}.   In fact, the nontrapping condition is necessary, see Theorem~2 in loc.~cit.,
and one also obtains~\eqref{eq:lim_apMur} for derivatives in~$z$ of the resolvent. The latter property then clearly implies local
decay on the time-evolution restricted to high energies.

In fact, while Doi~\cite{Doi}, Murata~\cite{Mur} show that smoothing
estimates and the usual decay estimates do not hold in the presence of
trapping, Ikawa~\cite{Ik} shows that one still obtains local decay
estimates for the Laplacian dynamics on $\Rd^n$ with several convex obstacles
removed. In the meantime, the microlocal analysis on manifolds with unstable closed geodesics,  of the resolvent of the Laplacian on the one hand,  and the
Schr\"odinger evolution on the other hand,  has grown into a vast area in and of itself which 
is intimately connected to the semiclassical analysis of scattering resonances.  See for example the
recent research monograph~\cite{Bony} on {\em Resonances for homoclinic trapped sets}, or Dyatlov's introduction to
the {\em fractal uncertainty principle}~\cite{Dyat}. 

In general relativity, unstable closed geodesics arise naturally in the study of the linear wave evolution on the background of both Schwarzschild and Kerr black holes. 
A substantial amount of work has accumulated around this topic, see for example the early works by Blue and Soffer, see~\cite{BSof} as well the very recent 
study of Price's law by Hintz~\cite{Hintz}. 
The latter paper was preceded by the work of  Tataru~\cite{Tat_GR}, as well as the results by Donninger, Soffer and the author~\cite{DSS2} on the spatially local, but temporally global,  decay of linear waves on Schwarzschild.  Metcalfe, Tataru, and Tohaneanu~\cite{MTT12} subsequently established Price’s law on nonstationary spacetimes with sufficient decay in a suitable sense.  Very recently, Angelopoulos,  Aretakis, Gajic~\cite{AAG} presented a ``physical space" approach to Price's law on Kerr space-times in contrast to Hintz's microlocal technique. We now set out to describe the author's results in more detail.

\subsection{Asymptotically conical surfaces of revolution}

A model case for the Schwarzschild manifold, Soffer, Staubach and the author~\cite{SSS1, SSS2} studied wave evolutions on surfaces of revolution with conic ends. 
Let $\Omega\subset \Rd^N$
be an embedded compact $d$-dimensional Riemannian manifold with metric $ds_\Omega^2$ and define the $(d+1)$-dimensional manifold
\begin{align*}
\calM &:=\{(x,r(x)\omega)\:|\: x\in\Rd,\; \omega\in\Omega\}, \\
ds^2 &=r^2(x)ds_{\Omega}^2 +(1+r'(x)^2)dx^2
\end{align*}
where $r\in C^\infty(\Rd)$ and $\inf_{x\in\Rd} r(x)>0$. We say that there is
a {\em conical end} at the right (or left) if
 \begin{equation}\label{eq:cone} r(x)=|x|\,(1+ h(x)),\quad
  h^{(k)}(x) =O(x^{-2-k}) \quad \forall\; k\geq 0
  \end{equation}
as $x\to\infty \; (x\to-\infty)$.

Of course one can consider cones with arbitrary opening angles but this adds
nothing of substance. Examples of such manifolds are given by surfaces of revolution with $\Omega=S^1$
such as the one-sheeted hyperboloid which satisfies $r(x)=\sqrt{1+x^2}$. They have the property that
the entire Hamiltonian flow on~$\calM$ is trapped on the
set $(x_0,r(x_0)\Omega)$ when $r'(x_0)=0$. From now we will only consider $S^1$ as cross-section $\Omega$  for
the sake of simplicity. The only difference from the general case is that instead of $\big\{ e^{\pm i\ell\theta},
\ell^2 \big\}_{\ell=0}^\infty$
 one has
a complete system  $\{Y_n,\mu_n\}_{n=0}^\infty $ of  $L^2$-normalized eigenfunctions and eigenvalues, respectively,  of~$\Delta_\Omega$.
In other words, $-\Delta_\Omega Y_n = \mu_n^2 Y_n$ where $0=\mu_0^2< \mu_1^2\le \mu_2^2\le\ldots$.

Note that we do not specify the local geometry of $\calM$, but only the asymptotic one at the ends. This allows
for very different behaviors of the geodesics. For the case of the one-sheeted hyperboloid, for example, the geodesic flow
around the unique periodic geodesic is hyperbolic in the sense of dynamical systems, whereas if we place a section of~$S^2$
in the middle of~$\calM$ then we encounter a set of positive measure in the cotangent bundle leading to stable periodic geodesics.
These two scenarios are depicted in Figure~2.
It is natural to ask to what extent this local geometry affects the dispersion of the flow.
The following result summarizes what is proved in~\cite{DSS1}, \cite{DSS2}
for the case of $\Omega=S^1$ (see those references for general compact~$\Omega$).

\begin{thm}\label{thm1}
Let $\calM$ be a surface which is asymptotically conical at both ends as defined above.
For each  $\ell\ge0$ and all $0\le\sigma\le \sqrt{2}\ell$,
there exist constants $C(\ell,\calM,\sigma)$ and $C_1(\ell,\calM,\sigma)$ such that for all $t>0$
\begin{align} \Vert w_\sigma\,
e^{it\Delta_{\calM}}\,  f\Vert_{L^{\infty}(\calM )} &\le \frac{C(\ell,\calM,\sigma)}{t^{1+\sigma}}\Big\Vert \frac{f}{w_\sigma }\Big\Vert_{L^1(\calM )} \label{eq:schr_d}\\
\Vert w_\sigma\, e^{it\sqrt{-\Delta_{\calM}} }\, f\Vert_{L^{\infty}(\calM )} &\le \frac{C_1(\ell,\calM,\sigma)}{t^{\frac{1}{2}+\sigma}}\Big(\Big\Vert \frac{\partial_x f}{w_\sigma }\Big\Vert_{L^1(\calM )} \nn \\
&\qquad + \Big\Vert \frac{f}{w_\sigma }
\Big\Vert_{L^1(\calM)}  \Big) \label{eq:wave_d} 
\end{align}
provided $f=f(x,\theta)=e^{i\ell\theta} \tilde f(x)$ where $\tilde f$ does not depend on~$\theta$. Here $w_\sigma(x):=\la x\ra^{-\sigma}$ are weights on~$\calM$.
\end{thm}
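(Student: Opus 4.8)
The plan is to reduce the fixed–angular–momentum problem to a one–dimensional Schr\"odinger operator on the line with a potential barrier, and to run the contour–deformation / spectral–measure analysis of Section~\ref{sec:low} on each such fiber. Concretely, writing $f(x,\theta)=e^{i\ell\theta}\tilde f(x)$ and conjugating $\Delta_{\calM}$ by the volume density, separation of variables on $\calM$ yields a Schr\"odinger operator $H_\ell=-\partial_x^2+V_\ell(x)$ acting on $\tilde f$, where the effective potential $V_\ell$ is the sum of the angular centrifugal term $\ell^2/r(x)^2$ (after the Liouville–Green normalization) and a geometric potential coming from the metric coefficients $r$, $r'$; the conical ends condition~\eqref{eq:cone} guarantees $r(x)=|x|(1+h(x))$ with $h^{(k)}(x)=O(x^{-2-k})$, so that $V_\ell(x)\sim\ell^2 x^{-2}$ at spatial infinity, i.e.\ $V_\ell$ is long–range of inverse–square type with a strictly positive coefficient. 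The key point is that at zero energy $H_\ell$ for $\ell\ge1$ has \emph{no} resonance and no eigenvalue: the repulsive inverse–square tail $\ell^2 x^{-2}$ forces the two Jost solutions at zero energy to behave like $|x|^{\frac12\pm\sqrt{\ell^2+\frac14}}$ (or $r(x)^{\pm\ell}$ before normalization), so the bounded zero–energy solution decays like $|x|^{-\sqrt{\ell^2+1/4}}$, which already places us in a ``super–regular'' situation; this is exactly what upgrades the threshold decay rate.

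The second step is to turn this threshold behavior into a quantitative resolvent expansion. One sets $z=-\zeta^2$ and studies $R_\ell(\zeta^2)=(H_\ell-\zeta^2)^{-1}$ near $\zeta=0$; because the zero–energy solutions are $|x|^{\frac12\pm\sqrt{\ell^2+1/4}}$, the low–energy expansion of $R_\ell$ is analytic up to order $2\sqrt{\ell^2+1/4}$ rather than merely up to order $1$, i.e.\ the first branching term in the analog of~\eqref{eq:laurent} occurs at the power $\zeta^{2\sqrt{\ell^2+1/4}-1}$ (modulo the usual care with which integer/half–integer exponents produce logarithms, handled exactly as in~\eqref{eq:resolvent}). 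Feeding this into the Laplace/contour representation~\eqref{eq:laplace} for the Schr\"odinger flow and~\eqref{eq:wave} for the wave flow, and invoking Watson's lemma (Lemma~\ref{lem:watson}) to read off the tail from the branch point at $p=0$, gives local (weighted $L^2$, or with more work $L^1\to L^\infty$) decay at the rate $t^{-1-\sigma}$ for Schr\"odinger and $t^{-\frac12-\sigma}$ for the wave flow, where $\sigma$ is controlled by how much of the low–energy expansion is analytic — and one checks that any $\sigma\le\sqrt{2}\,\ell$ is admissible (the constant $\sqrt2$ being what the inverse–square coefficient $\ell^2$, after the geometric corrections, actually delivers: $\sqrt{\ell^2+1/4}-1/2\ge$ the required margin once one accounts for the weights $w_\sigma=\la x\ra^{-\sigma}$ on both sides). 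The weight exponents on the left and the reciprocal weights on the right in~\eqref{eq:schr_d}--\eqref{eq:wave_d} are precisely matched to the polynomial decay of the zero–energy Jost solutions, so that the smoothing/limiting–absorption bounds close; the extra $\partial_x f$ term in~\eqref{eq:wave_d} is the usual half–derivative loss inherent to~\eqref{eq:wave_disp} translated through the $\sin(t\sqrt H)/\sqrt H$ functional calculus.

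The main obstacle — and the reason this requires the machinery of~\cite{DSS1} rather than a soft argument — is the \emph{large–energy} regime, i.e.\ uniformity of the resolvent bounds for $\zeta^2$ bounded away from $0$, where the trapping at $r'(x_0)=0$ lives. Because $\calM$ has a closed geodesic (the circle $r=r(x_0)$ at a critical point of $r$), the effective potential $V_\ell$ has a nondegenerate \emph{maximum} — a barrier of height $\sim\ell^2/r(x_0)^2$ — and at energies near the top of this barrier the classical flow is trapped, so one does \emph{not} have polynomial limiting–absorption bounds uniformly down to that energy; instead one has, at best, logarithmic losses and a discrete set of scattering resonances (quasinormal modes) exponentially close to the real axis, coming from the instability exponent of the periodic orbit. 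Handling this is where one must (i) localize to an energy window around the barrier top and perform a normal–form / Mourre analysis adapted to the hyperbolic fixed point of the symbol $\xi^2+V_\ell$, exploiting that the closed geodesic is \emph{nondegenerate unstable} so that $\{h,a\}>0$ can be arranged microlocally away from the fixed point while the fixed–point contribution is controlled by its spectral gap, and (ii) use the exponential decay of $V_\ell$'s deviation from the inverse–square tail to continue $R_\ell$ meromorphically across $p=0$ and past the barrier energy, collecting the quasinormal–mode residues $\sum_j e^{\zeta_j t}P_{\phi_j}$ which decay exponentially and are therefore harmless for the stated polynomial rates. For a single fixed $\ell$ this barrier is of fixed finite height, so these losses are absorbed into the $\ell$–dependent constants $C(\ell,\calM,\sigma)$, $C_1(\ell,\calM,\sigma)$ — which is exactly why Theorem~\ref{thm1} is stated fiber–by–fiber with $\ell$–dependent constants, the uniformity in $\ell$ (and the summation over $\ell$ with a loss of finitely many angular derivatives) being the harder matter deferred to~\cite{DSS2} via the semiclassical Liouville–Green transform in $\hbar=\ell^{-1}$.
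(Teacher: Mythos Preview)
Your overall architecture is right --- separate variables, reduce to a half-line Schr\"odinger operator with an inverse-square barrier, and read the decay rate off the low-energy expansion of the resolvent --- but there are two genuine gaps and one imprecision that matter.

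\medskip
\textbf{The exponent $\sqrt{2}\ell$.} Your effective potential is wrong by a factor of~$2$. In arclength $\xi$ along a generator one has $d\xi=\sqrt{1+r'(x)^2}\,dx\sim\sqrt2\,dx$ (since $r'(x)\to1$ on a $45^\circ$ cone), whence $r(\xi)\sim\xi/\sqrt2$ and the conjugated operator is
\[
\calH_\ell=-\partial_\xi^2+\frac{2\ell^2-\tfrac14}{\la\xi\ra^2}+O(\la\xi\ra^{-3}),
\]
not $\ell^2/\xi^2$. The zero-energy solutions are $\xi^{\frac12\pm\nu}$ with $\nu=\sqrt2\,\ell$, and it is precisely this $\nu$ that bounds the allowed $\sigma$. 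Your formula $|x|^{\frac12\pm\sqrt{\ell^2+1/4}}$ and the vague claim that ``$\sqrt2$ is what the coefficient actually delivers'' do not close; the $\sqrt2$ is a geometric input from the cone opening angle and must be computed.

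\medskip
\textbf{Large energies.} Your third paragraph imports the wrong machinery and invokes a hypothesis you do not have. First, the corrections to the inverse-square tail are $O(\la\xi\ra^{-3})$, \emph{polynomial} not exponential, so there is no meromorphic continuation of $R_\ell$ across $p=0$ and no quasinormal-mode residue expansion to collect. Second, the Mourre/barrier-top analysis you sketch is exactly what is needed to get constants \emph{polynomial in $\ell$} for Theorem~\ref{thm2}; it is not needed --- and in this form not applicable --- for Theorem~\ref{thm1}. For fixed~$\ell$ the paper handles $\lambda>\lambda_0(\ell)$ by crude termwise estimation of a Born series, accepting constants that grow faster than $e^{\ell^2}$. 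You should replace your third paragraph by that observation.

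\medskip
\textbf{Non-resonance.} Your argument that the repulsive tail forces the absence of a zero-energy resonance is incomplete: inverse-square tails alone do not preclude a \emph{globally} subordinate solution (the potential is not assumed positive and can indeed be negative for large curvature). The actual mechanism is specific to surfaces of revolution: one checks directly that $\calH_\ell\big(r^{1/2}e^{\pm\ell y}\big)=0$ with $y(\xi)=\int_0^\xi r(\eta)^{-1}d\eta$; since $y$ is odd, the recessive branch at $+\infty$ is the dominant one at $-\infty$, ruling out a global subordinate solution.
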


\begin{figure*}[ht]
\begin{center}
 \centerline{\hbox{\vbox{ \epsfxsize= 7.0 truecm \epsfysize=6.0
 truecm \epsfbox{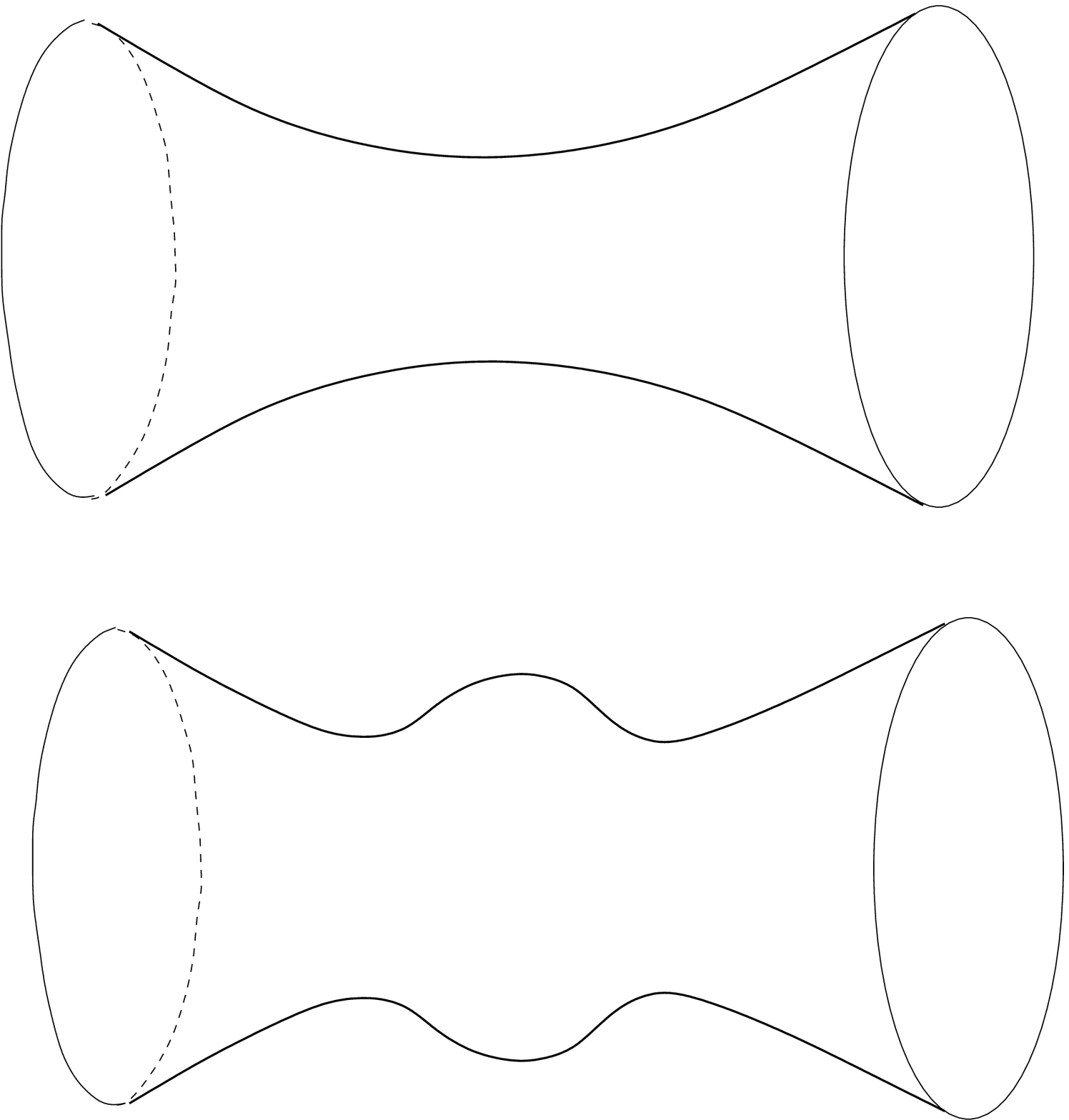}}}} \caption{Unstable versus stable geodesic flow}
\end{center}
\end{figure*}

In \eqref{eq:wave_d} one can obtain somewhat finer results by distinguishing between $\cos(t\sqrt{-\Delta_{\calM}})$ and $\frac{\sin(\sqrt{-\Delta_{\calM}})}{\sqrt{-\Delta_{\calM}}}$, see~\cite{DSS1} for statements of that kind.
 Needless to say, $\sigma=0$ is the analog
of the usual dispersive decay estimate for the Schr\"odinger and wave
evolutions on~$\Rd^2$. We remark that as in the case of the plane~$\Rd^2$, the free Laplacian $\Delta_{\calM}$ exhibits
a zero energy resonance which is, however, only visible at $\ell=0$ (this case is treated separately
in~\cite{SSS1}, whereas~\cite{SSS2}
studies $\ell>0$).

 Clearly, the {\em local} decay given by $\sigma>0$ has
no analog in the Euclidean setting and it also has no meaning for
$\ell=0$.
The restriction $\sqrt{2}{\ell}$ is optimal in Theorem~\ref{thm1}, at least for the Schr\"odinger equation,  and no faster decay can be obtained
than the one stated in~\eqref{eq:schr_d}. The $\sqrt{2}$-factor comes from the opening angle of $\frac{\pi}{4}$ and changing
that angle leads to different constants, namely $\frac{1}{\cos(\theta)}$ where $\theta$ is the opening angle of the asymptotic cone.

A heuristic explanation for the existence of this accelerated {\em local decay} is given by the geodesic flow combined with the
natural dispersion present in these equations. Indeed, the former will push any nontrapped geodesics into the ends (with $\ell$ playing the role
of the velocity of the geodesics), whereas the latter will spread any data which is initially highly localized around a periodic geodesic away
from it thus making it susceptible to the mechanism we just described.

What is not clear from this heuristic is whether or not the localized decay law should depend on the local geometry (by which we mean the
geometry which is not described by the asymptotic cones).
Theorem~\ref{thm1} shows
that this is not so, since the local decay is {\em fixed} and given by a specific power. Therefore, one sees that the
local geometry manifests itself exclusively through the constants $C(\ell,\calM,\sigma)$. 
This is natural, as one would expect a much longer waiting time before
the large~$t$ behavior of the theorem sets in if~$\calM$ exhibits stable geodesics.  In fact, the constant $C(\ell)$ grows exponentially
in that case as can be seen by solutions which are highly  localized (microlocally) around a periodic geodesic, see~\cite{Plamen} and~\cite{Sbie}.

In contrast, the methods of~\cite{DSS2} show that this constant grows like~$\ell^C$ if the manifold~$\calM$
has a unique periodic geodesic and is uniformly convex near it. This then allows one to sum up the estimates for each angular momentum
as described by the following theorem.

\begin{thm}\label{thm2}
Let $\calM$ be asymptotically conical at both ends as above and suppose that $\calM$ has a unique periodic geodesic and is uniformly convex near it. Then for all $t>0$,
and any $\eps >0$, and with $\calD:=1-\del_\theta^2$, 
\begin{align}
\Vert w_{1+\eps}
e^{it\Delta_{\calM}}\, w_{1+\eps}  f\Vert_{L^{2}(\calM )} &\le \frac{C(\calM,\eps )}{\la t\ra}\big\Vert{\calD\, f}\big\Vert_{L^2(\calM )} \label{eq:schr_d2}\\
\Vert w_{1}
e^{it\Delta_{\calM}}\, w_{1}  f\Vert_{L^{\infty}(\calM )} &\le \frac{C(\calM,\eps)}{ t}\big\Vert{\calD^{2+\eps}\, f}\big\Vert_{L^1(\calM )} \label{eq:schr_d3}
\end{align}
provided $f=f(x,\theta)$ is Schwartz on~$\calM$, say.
For the wave equation one has
\begin{align}
& \Vert w_{\frac12+\eps} e^{\pm it\sqrt{-\Delta_{\calM}}}\,w_{\frac12+\eps} f\Vert_{L^{2}(\calM )}  \nn \\
&\le \frac{C_1(\calM,\eps)}{\la t\ra^{\frac{1}{2}} }\Big(\big\Vert \calD^{\frac54}\, f'\big\Vert_{L^2(\calM )} + \big\Vert \calD^{\frac54}\, f
\big\Vert_{L^2(\calM)}  \Big) \label{eq:wave_d2}  \\
& \Vert w_{\frac12+\eps} e^{\pm it\sqrt{-\Delta_{\calM}}}\,w_{\frac12+\eps} f\Vert_{L^{\infty}(\calM )} \nn \\
&\le \frac{C_1(\calM,\eps)}{t^{\frac{1}{2}}}\Big(\big\Vert \calD^{\frac94+\eps}\,\partial_x f\big\Vert_{L^1(\calM )} + \big\Vert\calD^{\frac94+\eps}\, f
\big\Vert_{L^1(\calM)}  \Big) \label{eq:wave_d3}
\end{align}

\end{thm}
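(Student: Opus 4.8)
The plan is to reduce everything to the single-angular-mode estimates of Theorem~\ref{thm1} by separation of variables in $\theta$, and then to sum over the angular momentum $\ell$, paying finitely many powers of $\calD=1-\partial_\theta^2$ to absorb the $\ell$-dependence of the constants. Write $f(x,\theta)=\sum_{\ell\in\Z}e^{i\ell\theta}f_\ell(x)$; since $\Delta_{\calM}$ commutes with $\partial_\theta$, both evolutions act fiberwise, $e^{it\Delta_{\calM}}(e^{i\ell\theta}f_\ell)=e^{i\ell\theta}\,e^{itH_\ell}f_\ell$, where $H_\ell$ is, after the usual Liouville change of variables flattening the fiber measure, a one-dimensional Schr\"odinger-type operator whose effective potential carries a single barrier of height comparable to $\ell^2$, located at the point $x_0$ with $r'(x_0)=0$, i.e.\ at the periodic geodesic. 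Each fiber is exactly the situation covered by Theorem~\ref{thm1}, and $\calD$ acts on the $\ell$-th mode as multiplication by $1+\ell^2$. So the proof splits into: (i) the fiberwise decay, with the weight exponent chosen to match the target power of $t$; (ii) the polynomial-in-$\ell$ control of the constants $C(\ell,\calM,\sigma)$, $C_1(\ell,\calM,\sigma)$ under the uniform-convexity hypothesis; (iii) the summation.

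For (i) and (iii): the weight $w_\sigma$ is $\theta$-independent, so $(w_\sigma f)_\ell=w_\sigma f_\ell$ and $(w_\sigma f_\ell)/w_\sigma=f_\ell$; conjugating by $w_\sigma$ on both sides of \eqref{eq:schr_d3}--\eqref{eq:wave_d3} is precisely what renders the right-hand side of Theorem~\ref{thm1} weight-free on each fiber (in the wave case the $\partial_x$-commutator with $w_\sigma$ is harmless since $w_\sigma'=O(\la x\ra^{-1}w_\sigma)$). For $\ell\ge1$ the admissible range $0\le\sigma\le\sqrt2\,\ell$ contains $\sigma=1$, $\sigma=\tfrac12+\eps$, $\sigma=1+\eps$, etc., so Theorem~\ref{thm1} yields fiberwise decay $t^{-1-\sigma}\le t^{-1}$ (Schr\"odinger) and $t^{-\frac12-\sigma}\le t^{-\frac12}$ (wave) for $t\ge1$; for $0<t\le1$ one uses the short-time dispersive estimates on the bounded-geometry manifold $\calM$ (trapping being invisible at short times), which give the required bound once $w_\sigma\le1$ and $\calD\ge1$ are used to replace the $\calM$-norm of $f$ by $\|\calD^m f\|$. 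The finitely many low modes $\ell=0,\dots,\ell_0$ --- in particular $\ell=0$, where $\Delta_{\calM}$ has the zero-energy resonance and which is handled separately in \cite{SSS1} with the $\Rd^2$-type rate --- only add a fixed constant and do not affect convergence. For the $L^\infty$ bounds \eqref{eq:schr_d3}, \eqref{eq:wave_d3} one then sums with the triangle inequality, using $\|f_\ell\|_{L^1_x}\les(1+\ell^2)^{-m}\|\calD^m f\|_{L^1(\calM)}$ ($m$ integrations by parts in $\theta$ in the Fourier coefficient); for the $L^2$ bounds \eqref{eq:schr_d2}, \eqref{eq:wave_d2} one uses Plancherel in $\theta$ (the modes are $L^2$-orthogonal), the weighted-$L^2\to L^2$ form of the fiber estimates (which comes from the same spectral-measure representation underlying Theorem~\ref{thm1}, and, since $\sigma>\tfrac12$, also follows from its $L^1\to L^\infty$ form by a Hilbert--Schmidt argument on each fiber), and $\|f_\ell\|_{L^2_x}\les(1+\ell^2)^{-m}\|\calD^m f\|_{L^2(\calM)}$. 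The exponents $1$, $\tfrac54$, $2+\eps$, $\tfrac94+\eps$ appearing in the statement are exactly the smallest values of $m$ for which the resulting $\ell$-sums converge against the polynomial growth from (ii).

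The genuine work, and the main obstacle, is (ii). The free operator fails the non-trapping condition along the periodic geodesic, so $(H_\ell-z)^{-1}$ cannot satisfy limiting-absorption bounds uniform in $\ell$ at energies $z$ near the barrier top $E_\ell\sim\ell^2$. First I would set $\hbar:=\ell^{-1}$ and apply a \emph{global} Liouville--Green (WKB) transform bringing $H_\ell$ to the model form $-\hbar^2\partial_x^2+Q(x)-E$, a single potential hump whose maximum corresponds to the periodic geodesic; crucially, the maximum of $Q$ is non-degenerate precisely because that geodesic is uniformly convex, equivalently the trapped orbit is hyperbolic with nonvanishing expansion rate. The estimate one needs is the semiclassical resolvent bound $\|w_\sigma(H_\ell-z)^{-1}w_\sigma\|\les\hbar^{-N}$, with $N$ fixed, \emph{uniformly} over the whole relevant energy window including $z=E_\ell$ exactly, together with bounds on its $z$-derivatives. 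For $z$ bounded away from $E_\ell$ this follows from Airy/turning-point normal forms and classical positive-commutator (Mourre) estimates; at and near $E_\ell$, where the naive Mourre estimate degenerates at the barrier top, one runs the semiclassical Mourre argument adapted to the hyperbolic fixed point developed in \cite{SSS1, SSS2, CSST, CDST, DS}, which produces a loss that is only a \emph{fixed power} of $\hbar^{-1}=\ell$ rather than the exponential loss one would otherwise fear. Gluing the two regimes within the single Liouville--Green chart uniformly in $\ell$, and then integrating the resulting representation of the spectral measure against $e^{it\lambda}$ with enough $\lambda$-integrations by parts to extract the $t^{-1-\sigma}$ (resp.\ $t^{-\frac12-\sigma}$) decay, is the technical heart of \cite{DSS1, DSS2}. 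It is exactly this finite power of $\ell$ that has to be paid for by finitely many copies of $\calD$, which is why the statement loses angular derivatives; once (ii) is in hand, the summation of the previous paragraph closes and all of \eqref{eq:schr_d2}--\eqref{eq:wave_d3} follow.
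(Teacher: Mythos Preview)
Your overall architecture is correct and matches the paper: separate variables, pass to the semiclassical operator $\calH(\hbar)=-\hbar^2\partial_\xi^2+\hbar^2 V_\ell$ with $\hbar=\ell^{-1}$, prove a fiberwise decay with constants growing only like $\ell^C$ (this is where uniform convexity enters, as the non-degenerate maximum of the semiclassical potential), and sum via Plancherel in~$\theta$. Your identification of the non-degenerate barrier top with the hyperbolicity of the periodic geodesic is exactly right, and your bookkeeping for the $\calD$-powers is the same as the paper's.

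There is one genuine methodological difference at the barrier top. You propose to obtain uniform semiclassical \emph{resolvent} bounds $\|w_\sigma(H_\ell-z)^{-1}w_\sigma\|\lesssim\hbar^{-N}$ together with $z$-derivatives, and then integrate by parts in the spectral representation. The paper instead uses a \emph{time-dependent} route: a semiclassical Mourre estimate at the barrier top combined with the Hunziker--Sigal--Soffer propagation bound (their Theorem~\ref{thm:HSS}), which directly yields $\|\langle A\rangle^{-\alpha}e^{itH}g(H)\langle A\rangle^{-\alpha}\|\lesssim\langle t\rangle^{-\alpha}$. This is why the weights $w_1$ (Schr\"odinger) and $w_{\frac12+\eps}$ (wave) appear in~\eqref{eq:schr_d3},~\eqref{eq:wave_d3}: they are inherited from the $\langle A\rangle^{-\alpha}$ in the HSS estimate, not from the low-energy WKB. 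The paper explicitly notes that the stationary alternative (Weber normal form near the top, \cite{CPS}) also works, so your route is viable; but the references you cite for the barrier-top step are off --- the survival of the Mourre estimate at a non-degenerate maximum, despite classical trapping, is due to the uncertainty principle argument of Briet--Combes--Duclos~\cite{BCD} and Nakamura~\cite{Nak}, not to \cite{SSS1,SSS2,CSST,CDST,DS}, which handle the \emph{low-energy} WKB.

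Two smaller points. First, your step~(i) invokes Theorem~\ref{thm1} for all $\ell\ge1$ and then separately asks for polynomial constants in~(ii); this is slightly misleading, since the constants in Theorem~\ref{thm1} grow faster than $e^{\ell^2}$ (the paper says so explicitly), so Theorem~\ref{thm1} is used only for the finitely many modes $\ell<\ell_0$ and the large-$\ell$ fiberwise decay must be \emph{re-proved} from scratch by the semiclassical energy decomposition (low/intermediate/high). Second, for the $L^\infty$ bound~\eqref{eq:schr_d3} the paper does not run a separate $L^1\to L^\infty$ argument at intermediate energies; it takes the $L^2$ bound just obtained and upgrades via Sobolev embedding in~$\theta$, which costs one extra power of $\ell$ and, together with the $\ell^{1+\eps}$ needed for summation, accounts for the jump from $\calD$ to $\calD^{2+\eps}$.
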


The weights $w_1$ and $w_{\frac12+\epsilon}$ appearing in~\eqref{eq:schr_d3} and~\eqref{eq:wave_d3}, respectively, are a by-product of our proof and can most
likely be removed. The origin of the weights in our method will be explained in Section~\ref{sec:suminell} below.
\begin{figure*}[ht]
\begin{center}
\centerline{\hbox{\vbox{ \epsfxsize= 6.0 truecm \epsfysize=5.0
truecm \epsfbox{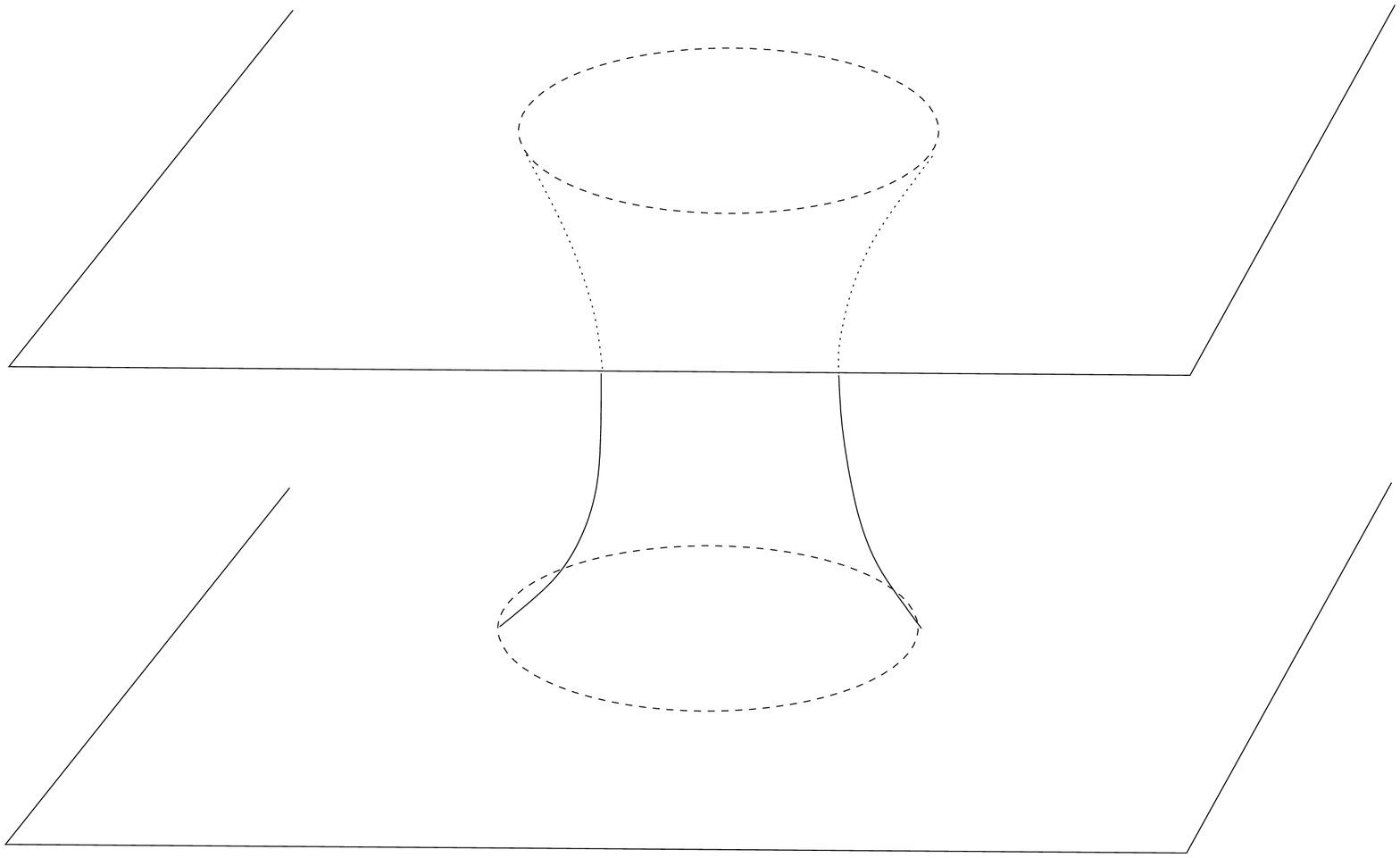}}}} \caption{Two planes joined by a neck}
\end{center}
\end{figure*}
One also obtains the accelerated decay rates which are better by $t^{-\sigma}$ as in Theorem~\ref{thm1} provided one puts in the weights as before, makes the number of derivatives required on the right-hand side depend on~$\sigma$, and provided the data are perpendicular to $e^{i\ell\theta}$ for $\sigma>\sqrt{2}\ell\ge0$.
We remark that one can think of the surfaces in Theorem~\ref{thm1} as two planes joined by a neck, see Figure~3. On the other hand,
the methods which are currently used to prove Theorems~\ref{thm1} and~\ref{thm2} do not extend to the case of more necks, as then
there is no clear way of separating variables.

There is no reason to expect that the number of derivatives required on the data in Theorem~\ref{thm2} is optimal, in fact it most certainly is not.
Heuristically speaking, these derivatives measure the spreading or {\em non-concentration} of solutions near hyperbolic orbits in dependence on the angular momentum, which is a quantum effect.  See for example Christianson's work~\cite{Chr} on this topic.

Doi~\cite{Doi} proved that the presence of trapping destroys the so-called local smoothing estimate for the Schr\"odinger evolution.
More precisely, he showed that one loses (even locally in time) the $\frac12$-derivative gain present in~$e^{it\Delta}$. Note that this does not constitute a contradiction to Theorem~\ref{thm2} as the latter does not claim any gain of regularity (on the contrary, we lose angular derivatives). In a similar vein, Burq, Guillarmou, Hassell~\cite{BGH}
proved
that Strichartz estimates may remain valid on metrics with trapping.

We now describe the method of proof leading to Theorem~\ref{thm1}. Later we will discuss how to obtain Theorem~\ref{thm2}, which
requires considerably more work. We will then also describe the result~\cite{DSS2} for linear waves on Schwarzschild, which is very close
to Theorem~\ref{thm2}.

To begin with, let $\xi$ be arclength along a generator of~$\calM$. Then the Laplacian takes the form
\[
\Delta_{\calM} = \frac{1}{r(\xi)} \partial_\xi (r(\xi)\partial_\xi) + \frac{1}{r^2(\xi)} \Delta_{S^2}
\]
Now
\[
e^{-i\ell\theta} r^{\frac12}(\xi) \Delta_{\calM} (r^{-\frac12}(\xi) e^{i\ell\theta} f(\xi)) =  \calH_{\ell} f
\]
with
\begin{equation}\label{eq:calHell}
\calH_\ell = -\partial_\xi^2 + V_{\ell},\qquad V_{\ell}(\xi) = \frac{2\ell^2-\frac14}{\la \xi\rangle^2} + O(\la \xi\rangle^{-3})
\end{equation}
where each $\xi$-derivative of the $O(\cdot)$-term gives one extra power of~$\xi$ as decay.
\begin{figure*}[ht]
\begin{center}
\centerline{\hbox{\vbox{ \epsfxsize= 6.5 truecm \epsfysize=5.8
truecm \epsfbox{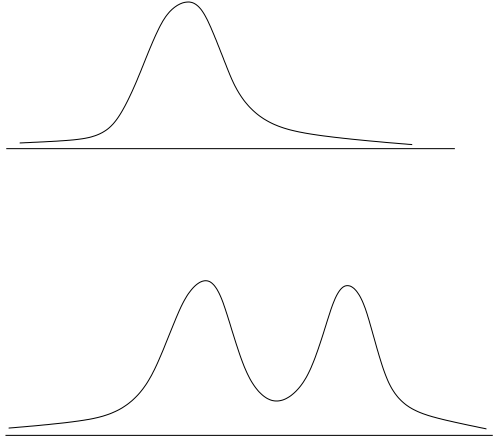}}}} \caption{Potentials corresponding to the surfaces in Figure~2}
\end{center}
\end{figure*}
 We remark that the leading $\la \xi\rangle^{-2}$ decay
is {\em critical} for several reasons. For us most relevant is the behavior of the Jost solutions as the energy~$\lambda^2$ tends to zero; in fact
these Jost solutions are continuous in $\lambda$ around~$\lambda=0$ provided the decay of the potential is at least $\la \xi\rangle^{-2-\epsilon}$ for some $\epsilon>0$. At $\epsilon=0$ this property is lost -- which is precisely what allows for the accelerated decay of Theorem~\ref{thm1}. To be more specific, one first reduces Theorem~\ref{thm1} (at least the Schr\"odinger bound~\eqref{eq:schr_d}, the wave equation
being similar) via the spectral theorem to the point-wise bound
\begin{align}
\nn \sup_{\infty>\xi\ge\xi'>-\infty} &(\la \xi\ra \la \xi'\ra )^{-\frac12}
\Big| \int_0^\infty e^{it\lambda^2} \Im \Big[ \frac{f_{+,\ell}(\xi,\lambda) f_{-,\ell}(\xi',\lambda)}{ W_{\ell}(\lambda)} \Big] \\
&\qquad \, \lambda\, d\lambda \Big| \le C_\ell \, t^{-1-\sigma} \label{eq:xixi'} 
\end{align}
where $C_\ell$ is a uniform constant and $\sigma=\sqrt{2}\ell$.
Here $f_{\pm,\ell}$ are the (outgoing) Jost solutions, which satisfy $\calH_{\ell} f_{\pm,\ell} =\lambda^2 f_{\pm ,\ell}$ and $f_{\pm,\ell}\sim e^{\pm i\lambda \xi}$ as $\xi\to\pm\infty$. Moreover, $W_\ell(\lambda)$ is the Wronskian of $f_+,f_-$.
We remark that the quantity inside the absolute values in~\eqref{eq:xixi'} is exactly
\[
\int_0^\infty e^{it\lambda^2} E(d\lambda^2)(\xi,\xi')
\]
  where $E(d\lambda^2)(\cdot,\cdot)$ is the kernel of the spectral resolution of $\calH_\ell$.
  As usual,
  \[
  f_+(\xi,\lambda) = e^{i\xi\lambda} + \int_{\xi}^\infty \frac{\sin(\lambda(\xi'-\xi))}{\lambda} V(\xi') f_+(\xi',\lambda)\, d\xi'
  \]
  From this formula, one immediately sees the aforementioned discontinuity at $\lambda=0$ since $\xi V(\xi)\not\in L^1(0,\infty)$.
  Setting $\xi=\xi'=0$,
  \eqref{eq:schr_d} of Theorem~\ref{thm1} reduces to the
 standard stationary phase type bound (with $\nu:=\sqrt{2}\,\ell$)
 \[
 \Big|\int_0^\infty e^{it\lambda^2} \lambda^{1+2\nu}
 \chi(\lambda)\,d\lambda \Big|\le C t^{-1-\nu}
 \]
 where $\chi$ is a smooth cut-off function to the interval $[0,1]$,
 say. To see why the spectral measure should be as flat as $\lambda^{1+2\nu}\,d\lambda$, let us
 first give an informal proof of the fact that
 \begin{equation}\label{eq:Welllambda}
 W_\ell(\lambda)=c\lambda^{1-2\nu}(1+o(1))\qquad \lambda\to0
 \end{equation}
where $c\ne0$.  Since this Wronskian appears in the denominator of the resolvent, it at least serves as
an indication that the spectral measure might be this small for small~$\lambda$ (one has to be very careful here,
since the numerator is of the same size -- however, the {\em imaginary part} of the resolvent has the desired size $O(\lambda^{2\nu})$).
To begin with, recall from basic scattering theory  that the Wronskian is given by
\begin{equation}
\label{eq:Wlam}
W(\lambda)=\frac{-2i\lambda}{T(\lambda)}
\end{equation}
where $T(\lambda)$ is the transmission coefficient, see Figure~5 (in that figure the dashed line is supposed to indicate an energy level~$k^2$, and the
turning points are defined as the projections of the intersection of the graph with that line). By the so-called WKB approximation, one has
to leading order that $T(\lambda)=e^{-S(\lambda)}$ with the action $S$ given
by
\begin{align*}
S(\lambda) &= \int_{x_0}^{x_1} \sqrt{\nu^2 \la y\rangle^{-2}-\lambda^2}\, dy \\
&= \hbar^{-1} \int_{x_0}^{x_1} \sqrt{2 \la y\rangle^{-2}-\hbar^2\lambda^2}\, dy
\end{align*}
with $x_0<0<x_1$ being the turning points which are defined as  $V(x_0)=V(x_1)=\lambda^2$. Note that we modified the potential by
removing the cubic corrections as well as the $-\frac14 \la \xi\ran^{-2}$ part of the potential (the latter obviously requiring
some justification).  Furthermore, we used that $\nu=\sqrt{2}\ell$ and assumed $\ell>0$. As a result,
\[
S(\lambda) = 2\nu |\log\lambda|(1+o(1))\qquad \lambda\to0
\]
which then gives \eqref{eq:Welllambda} to leading order.
To justify the removal of the $\frac14 \la \xi\ran^{-2}$-part of the potential~$V_\ell$, we simply note that the usual WKB ansatz for
the zero energy solutions of~$\calH_\ell$, viz.~$\calH_\ell f=0$ is the approximate equality
\[
f(\xi) \simeq V_\ell^{-\frac14}(\xi) e^{\pm\int_1^\xi \sqrt{V_\ell(\eta)}\, d\eta}
\]
In view of \eqref{eq:calHell} one obtains the asymptotic behavior $\xi^{\frac12 \pm \sqrt{\nu^2-\frac14}}$ as $\xi\to\infty$.
On the other hand, the exact solutions of
\[
-f''(\xi) +  \frac{\nu^2-\frac14}{ \xi ^2} f(\xi)=0
\]
are of the form $\xi^{\frac12 \pm \nu}$. The WKB approximation can therefore only be correct provided the $-\frac14 \xi^{-2}$ term is removed from the
potential~$V_\ell$ (for a precise rendition -- with control of error terms -- of this heuristics discussion see Section~2 of~\cite{SSS2}).
Another important comment concerning $V_\ell$ is that \eqref{eq:Wlam}, while true to leading order {\em semi-classically} as $\hbar=\ell^{-1}\to0$, {\em provided}
the energy $\lambda>\lambda_0>0$ (where the latter is fixed), does not necessarily hold as $\lambda\to0$.
The key property here is that $\calH_\ell$ does not have {\em a zero energy resonance} which means that there is no globally subordinate (or 
recessive) solution. This refers to solutions of the slowest
allowed growth at both ends. For example, consider the operator $H=-\frac{d^2}{dx^2}+V$ where $V=\bar{V}$ satisfies $(1+|x|)V(x)\in L^1(\Rd)$. Then by the usual Jost/Volterra perturbation analysis, there is a fundamental system of solutions to $Hf=0$ consisting of $f_1(x)\sim x$, respectively $f_2(x)\sim 1$ as $x\to\infty$. Thus $f_2$ is the unique (up to nonzero factors) subordinate solution at $+\infty$. A resonance at $0$ energy therefore occurs if $Hf=0$ admits a solution $f\ne0$ which is asymptotic to a constant for both $x\to\pm\infty$. Since the only other option would be some linear growth at either end, this is equivalent to $f\in L^\infty(\Rd)$. This is a {\em universal} characterization of $0$ energy resonances through solutions of $Hf=0$ even if $V$ violates $(1+|x|)V(x)\in L^1(\Rd)$ as in Bessel-type potentials arising in most problems discussed in this note, or for that matter, for $V$ which are strongly singular. The latter  means  that $V$ is locally bounded (for simplicity) but $\int_{-\infty}^\infty |V(x)|\, dx=\infty$,  and we assume that $H$ is limit point at both ends. Depending on the specific choice of $V$, one needs to find a fundamental system of $Hf=0$ at both $\pm\infty$ and then select the subordinate solution. A resonance is characterized by a solution which is globally subordinate. 
\begin{figure*}[ht]
\begin{center}
 \centerline{\hbox{\vbox{ \epsfxsize= 7.8 truecm \epsfysize=4.8
 truecm \epsfbox{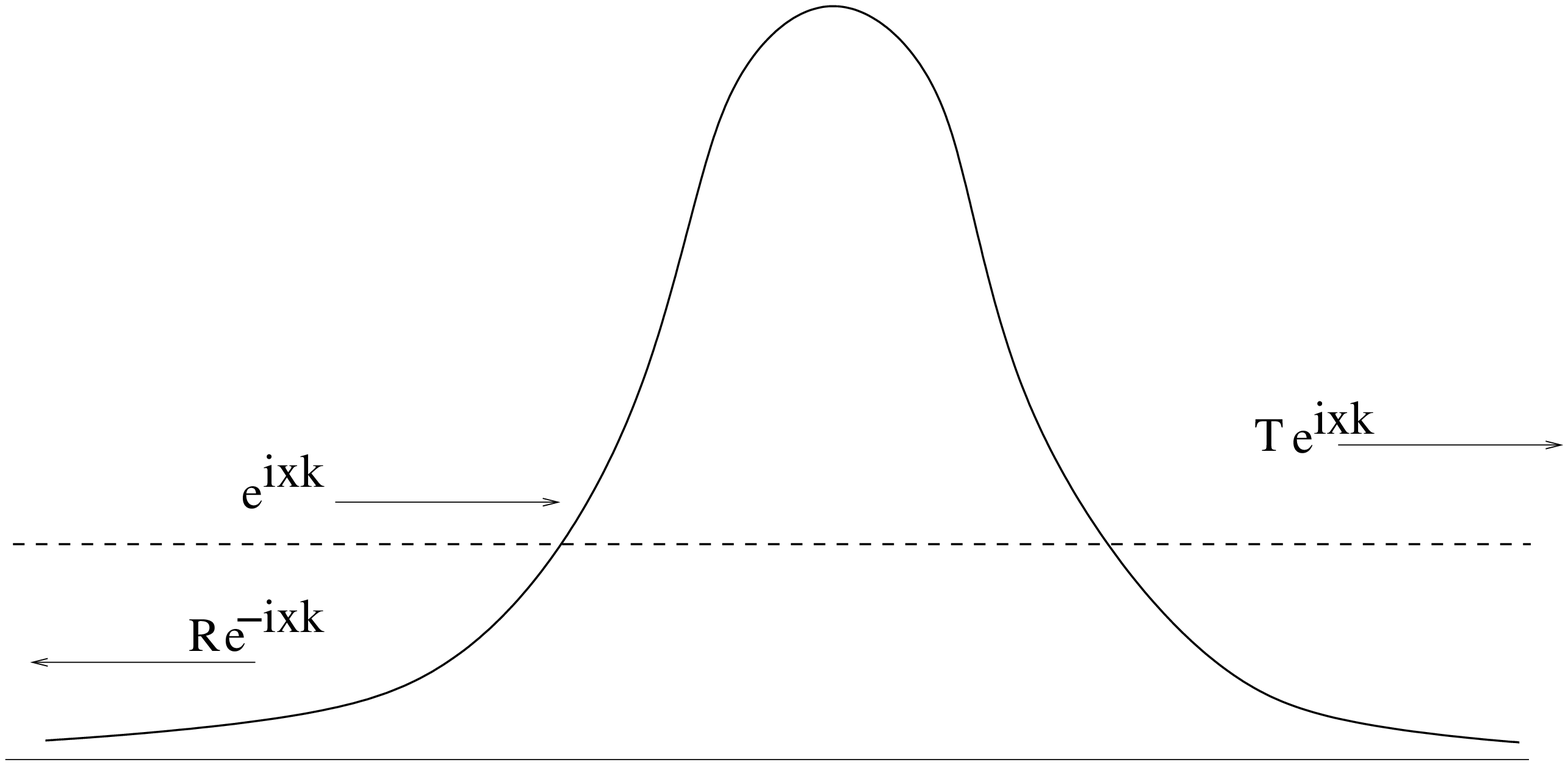}}}} \caption{Reflection and transmission coefficients}
\end{center}
\end{figure*}

While our discussion has been largely heuristic, we emphasize that~\eqref{eq:Welllambda} is proved in~\cite{SSS2} by means of an
asymptotic description of the Jost solutions as $\lambda\to0$. Moreover, it is shown there that the constant~$c$ in~\eqref{eq:Welllambda}
vanishes in case of a zero energy resonance which indicates that the WKB approximation fails in that case as $\lambda\to0$.
Finally, we emphasize that  the only natural small parameter in~\cite{SSS2} for  {\em fixed} $\ell\ge1$ is the energy $\lambda$. This is in contrast to the summation problem in~$\ell$ where $\hbar:=\ell^{-1}$ represents another (and most important) small
parameter. In fact, for large $\ell$ the errors in the WKB approximations are controlled in terms of this small parameter rather
than in terms of the small energy (we will return to this matter below). In order to be able to distinguish the two potentials in Fig.~4, or manifolds with distinct local geometries in Fig.~2 we therefore need to obtain precise asymptotics for the Jost solutions and the spectral measure for {\em both} small energies $0<\lambda<\lambda_0$ and all large $|\ell |$, {\em simultaneously}. This sets these problems apart from most of the semi-classical literature in several ways: (i) it is not enough to compute the limit $\hbar\to 0$. In fact,  we need a precise asymptotic representation of the Jost solutions uniformly in small $\hbar$ and all energies. This will be explained in more detail below in the summation section. (ii) The need for uniform control for all small energies is also in stark contrast to the literature which typically restricts any semi-classical analysis to positive or large energies. 

The rigorous proof of~\eqref{eq:Welllambda}
 proceeds by means of a classical matching method. To be more specific, consider the
Schr\"odinger operator on the line (for notational convenience we write~$x$ instead of~$\xi$)
\begin{align*}
\calH_\nu &= -\partial_x^2 + (\nu^2-\frac14)\la x\ran^{-2}-U_\nu(x), \\
\frac{d^k U_\nu(x)}{d x^k}
& =O(x^{-3-k})
\end{align*}
for all $k\ge0$
as $x\to\pm\infty$ and with $\nu>0$ fixed. To decribe the Jost solution $f_{+,\nu}(x)$ on the interval $x\ge1$ we start from
the zero energy solutions
\begin{align*}
u_{0,\nu}^{+}(x)  &= x^{\frac12+\nu}(1+O(x^{-\alpha})), \\
  u_{1,\nu}^{+}(x) &= x^{\frac12-\nu}(1+O(x^{-1}))  \text{\ \ as\ }x\to\infty
\end{align*}
which form a fundamental system of $\calH_\nu f=0$ (and with $\alpha:=\min(1,2\nu)$).
Next, one perturbs these solutions with respect to the energy~$\lambda$. More specifically,
one shows via Volterra iteration that there is a basis $\{u_{0,\nu}^{+}(x,\lambda) , u_{1,\nu}^{+}(x,\lambda) \}$
of solutions to the equation $\calH_\nu f =\lambda^2 f$ which satisfy (at least for $\nu>1$)
\begin{equation}\label{eq:ujnu}
u_{j,\nu}^{+}(x,\lambda)= u_{j,\nu}^{+}(x) (1+ O(\lambda^2 x^2))
\end{equation}
on the interval $1\le x\ll \lambda^{-1}$ (we are only considering small $\lambda$ for now).
Clearly, one has
\[
f_{+,\nu}( x ,\lambda ) =a_{+,\nu}(\lambda )u_{0,\nu}^+( x ,\lambda )+b_{+,\nu}(\lambda )u_{1,\nu}^+( x ,\lambda )
\]
where the coefficients are given by
\begin{equation}
\begin{aligned}
\label{eq:Wfnu}
a_{\pm,\nu}(\lambda ) &= -W(f_{\pm,\nu}(\cdot
,\lambda ),u_{1,\nu}^{\pm}(\cdot ,\lambda )),\\
b_{\pm,\nu}(\lambda
)&= W(f_{\pm,\nu}(\cdot ,\lambda ),u_{0,\nu}^\pm(\cdot ,\lambda
))
\end{aligned}
\end{equation}
The aforementioned {\it matching} means nothing else than computing these Wronskians.
The point where they are computed is chosen to be $\lambda^{-1+\epsilon}$ with~$\epsilon>0$ small and fixed. On the one
hand,  this choice guarantees that the errors in~\eqref{eq:ujnu} are $O(\lambda^{2\epsilon})$, which is admissible. On the
other hand, it requires that we obtain a sufficiently accurate description of the Jost solutions on $[\lambda^{-1+\epsilon},\infty)$.
The latter is accomplished by comparing the outgoing Jost solution of the operator $\calH_\nu$ to that of $\calH_{0,\nu}$ given by
\[
\calH_{0,\nu} := -\partial_x^2 + (\nu^2-\frac14) x^{-2}
\]
The outgoing Jost solution of this operator on~$\xi\ge1$ equals
\[
\sqrt{\frac{\pi}{2}}\, e^{i(2\nu+1)\pi
/4}\sqrt{\xi\lambda}\,H^{(+)}_\nu(\xi\lambda)
\]
which is asymptotic to $e^{i \xi\lambda}$ as $\xi\to\infty$. Here $H^{(+)}_\nu(z)=J_\nu(z)+iY_\nu(z)$ is the usual
Hankel function. Carrying out the perturbative analysis with $\calH_{0,\nu}$ as giving the leading order allows one
to approximate $f_+(\xi,\lambda)$ with small errors on the interval $(\lambda^{-1+\epsilon},\infty)$.
With this asymptotic representation in hand, one now has the following result, see Proposition~3.12 in~\cite{SSS2}.

\begin{prop}
  \label{prop:wronski} Let $\beta_\nu := \sqrt{\frac{\pi}{2}}\, e^{i(2\nu+1)\pi
  /4}$. With nonzero real constants
  $\alpha_{0,\nu}^+,\beta_{0,\nu}^+$, and some
  sufficiently small $\eps>0$,
  \begin{equation}
    \label{eq:abW}
    \begin{aligned}
      a_{+,\nu}(\lambda) &= \lambda^{\frac12+\nu} \beta_\nu(\alpha_{0,\nu}^+ +
      O(\lambda^\eps) + iO(\lambda^{(1-2\nu)\eps})) \\
 b_{+,\nu}(\lambda) &= i\lambda^{\frac12-\nu} \beta_\nu(\beta_{0,\nu}^+ +
      O(\lambda^\eps) + iO(\lambda^{(1+2\nu)\eps}))
  \end{aligned}
  \end{equation} as $\lambda\to0+$
  with real-valued $O(\cdot)$ which behave like symbols under
  differentiation in~$\lambda$. The asymptotics as $\lambda\to0-$
  follows from that as $\lambda\to0+$ via the relations
  $a_{+,\nu}(-\lambda)=\overline{a_{+,\nu}(\lambda)}$,
  $b_{+,\nu}(-\lambda)=\overline{b_{+,\nu}(\lambda)}$.
\end{prop}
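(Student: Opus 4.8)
The plan is to compute the Wronskians in~\eqref{eq:Wfnu} at the matching point $x_\lambda := \lambda^{-1+\eps}$, using two different representations of the relevant solutions that are each accurate there. First I would record the behavior of the Jost solution $f_{+,\nu}(\cdot,\lambda)$ on $[x_\lambda,\infty)$ by the comparison with $\calH_{0,\nu}$: since $U_\nu$ and the subleading corrections decay like $x^{-3}$ while $\calH_{0,\nu}$ is exactly solvable via Hankel functions, a Volterra iteration on $[x_\lambda,\infty)$ gives $f_{+,\nu}(x,\lambda) = \beta_\nu \sqrt{x\lambda}\, H_\nu^{(+)}(x\lambda)\,(1+O(\lambda^\eps))$ together with the analogous bound for its $x$-derivative, with error terms that are symbolic in $\lambda$. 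Here the key input is that on $[x_\lambda,\infty)$ one has $x\lambda \ge \lambda^\eps$, so the small-argument regime of the Hankel function is never entered and the error from the potential tail is controlled by $\int_{x_\lambda}^\infty y\,|U_\nu(y)|\,dy \lesssim x_\lambda^{-1} = \lambda^{1-\eps}$, which is even better than what we need. Simultaneously, on $[1,x_\lambda]$ I would use the basis $u_{j,\nu}^+(x,\lambda)$ from~\eqref{eq:ujnu}, whose error $O(\lambda^2 x^2)$ is $O(\lambda^{2\eps})$ at $x=x_\lambda$.

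Next I would evaluate the two Wronskians $a_{+,\nu} = -W(f_{+,\nu},u_{1,\nu}^+)$ and $b_{+,\nu}=W(f_{+,\nu},u_{0,\nu}^+)$ at $x=x_\lambda$. Inserting the Hankel representation of $f_{+,\nu}$ and the small-argument expansions $H_\nu^{(+)}(z)\sim c_\nu z^{-\nu} + i\,\tilde c_\nu z^{\nu}$ (valid since $z=x_\lambda\lambda=\lambda^\eps\to0$), together with $u_{0,\nu}^+(x)\sim x^{1/2+\nu}$ and $u_{1,\nu}^+(x)\sim x^{1/2-\nu}$, one sees that $W(f_{+,\nu},u_{1,\nu}^+)$ is dominated by pairing the $z^{-\nu}$-part of the Hankel function (i.e.\ the $x^{1/2-\nu}$ growth of $f_{+,\nu}$, carrying a $\lambda^{1/2+\nu}$ prefactor from $\sqrt{x\lambda}\cdot(x\lambda)^{-\nu}$ scaling) against $u_{1,\nu}^+\sim x^{1/2-\nu}$; two solutions both $\sim x^{1/2-\nu}$ have Wronskian of order one times the leading coefficient, giving $a_{+,\nu}(\lambda)=\lambda^{1/2+\nu}\beta_\nu(\alpha_{0,\nu}^+ + \dots)$. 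The imaginary $z^\nu$-part contributes the subleading $iO(\lambda^{(1-2\nu)\eps})$ correction: relative to $\lambda^{1/2+\nu}$ it carries an extra factor $(x_\lambda\lambda)^{2\nu}\cdot(\text{something}) = \lambda^{2\nu\eps}$ against the $x^{1/2+\nu}$ mismatch, and tracking the powers of $x_\lambda=\lambda^{-1+\eps}$ carefully yields the stated exponent $(1-2\nu)\eps$. The computation of $b_{+,\nu}$ is symmetric: now the $z^\nu$-part of the Hankel function pairs cleanly with $u_{0,\nu}^+\sim x^{1/2+\nu}$, producing the main term $i\lambda^{1/2-\nu}\beta_\nu\beta_{0,\nu}^+$, while the $z^{-\nu}$-part gives the $iO(\lambda^{(1+2\nu)\eps})$ error. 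That the leading constants $\alpha_{0,\nu}^+,\beta_{0,\nu}^+$ are real and nonzero comes from the fact that they are built from real zero-energy data ($u_{j,\nu}^+$ are real) and the real part of the small-argument Hankel asymptotics, and nonvanishing is exactly the statement that $\calH_\nu$ has no zero energy resonance --- a zero resonance would force a cancellation making one of these vanish.

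The symbol behavior under $\partial_\lambda$ I would get by differentiating the Volterra integral equations: each $\lambda$-derivative acting on the Hankel comparison or on the $u_{j,\nu}^+(x,\lambda)$ brings down at most a factor $\lambda^{-1}$ times an extra $x$-weight, and since everything is evaluated at $x_\lambda=\lambda^{-1+\eps}$ the net cost is $\lambda^{-1}$, i.e.\ the $O(\cdot)$ terms are classical symbols of order $0$ in $\lambda$ near $\lambda=0+$. Finally, the reflection relations $a_{+,\nu}(-\lambda)=\overline{a_{+,\nu}(\lambda)}$ and $b_{+,\nu}(-\lambda)=\overline{b_{+,\nu}(\lambda)}$ are immediate from $\overline{f_{+,\nu}(x,\lambda)}=f_{+,\nu}(x,-\lambda)$ (since $\calH_\nu$ is real and $f_{+,\nu}\sim e^{i\lambda x}$) together with the reality of $u_{0,\nu}^+(x,\lambda)$, $u_{1,\nu}^+(x,\lambda)$ in $\lambda^2$, applied to~\eqref{eq:Wfnu}. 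The main obstacle I expect is bookkeeping the precise powers of $\lambda$ in the \emph{subleading} imaginary terms --- making sure the interplay between the $\sqrt{x\lambda}$ prefactor, the two branches $z^{\pm\nu}$ of the Hankel function, the mismatch exponents $\pm 2\nu$ between $u_0^+$ and $u_1^+$, and the choice $x_\lambda=\lambda^{-1+\eps}$ all combine to give exactly $(1\mp 2\nu)\eps$ and not some other linear combination --- and in verifying that the error from the potential tail on $[x_\lambda,\infty)$ genuinely dominates (is no worse than) these recorded errors, so that the comparison with $\calH_{0,\nu}$ is legitimate at this matching point. The restriction $\nu>1$ enters to keep $\lambda^{2}x^2$-type errors in~\eqref{eq:ujnu} under control relative to the gap between $x^{1/2\pm\nu}$; for $0<\nu\le 1$ one needs the separate, more delicate treatment alluded to in~\cite{SSS2}.
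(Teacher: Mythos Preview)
Your overall strategy is exactly the one the paper outlines (and that is carried out in~\cite{SSS2}): compare $f_{+,\nu}$ to the Hankel solution of $\calH_{0,\nu}$ on $[x_\lambda,\infty)$, use the basis $u_{j,\nu}^+(\cdot,\lambda)$ on $[1,x_\lambda]$ with $x_\lambda=\lambda^{-1+\eps}$, and evaluate the Wronskians~\eqref{eq:Wfnu} at the matching point. The symbol bounds via differentiation of the Volterra equations and the conjugation relations at the end are also correct.

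However, your identification of which Hankel branch produces which leading term is backwards, and this matters for getting both the constants and the subleading errors right. First, $\sqrt{x\lambda}\,(x\lambda)^{-\nu}=x^{\frac12-\nu}\lambda^{\frac12-\nu}$, so the $z^{-\nu}$ part of $H_\nu^{(+)}$ carries the prefactor $\lambda^{\frac12-\nu}$, not $\lambda^{\frac12+\nu}$ as you write. Second, the Wronskian of two functions both asymptotic to $x^{\frac12-\nu}$ vanishes at leading order rather than being ``of order one.'' The correct picture is that $a_{+,\nu}=-W(f_{+,\nu},u_{1,\nu}^+)$ extracts the $u_{0,\nu}^+\sim x^{\frac12+\nu}$ component of $f_{+,\nu}$ (since $W(u_0^+,u_1^+)$ is the nonzero one), and that component comes from the $z^{+\nu}$ branch of $H_\nu^{(+)}$, carrying $\lambda^{\frac12+\nu}$; dually, $b_{+,\nu}=W(f_{+,\nu},u_{0,\nu}^+)$ extracts the $x^{\frac12-\nu}$ component, which comes from the $z^{-\nu}$ (i.e., $iY_\nu$) branch and carries $\lambda^{\frac12-\nu}$ --- this is also the origin of the factor $i$ in front of $\beta_{0,\nu}^+$. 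Once the pairing is corrected, the subleading exponents $(1\mp2\nu)\eps$ indeed fall out from the cross terms evaluated at $x_\lambda\lambda=\lambda^\eps$.

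Finally, the nonvanishing of $\alpha_{0,\nu}^+,\beta_{0,\nu}^+$ is \emph{not} the nonresonance condition for $\calH_\nu$: these constants are nonzero simply because $H_\nu^{(+)}=J_\nu+iY_\nu$ genuinely contains both $z^{\pm\nu}$ branches with nonzero coefficients (together with the normalization $W(u_0^+,u_1^+)\ne0$). The nonresonance hypothesis enters only later, in the constant $W_{0,\nu}$ governing $W(f_{+,\nu},f_{-,\nu})$, which couples the two ends of the line; see the discussion immediately following the proposition in the paper.
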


Analogous expressions hold for $a_{-,\nu}$
and $b_{-,\nu}$ which of course refers to the solutions on $x\le -1$.
From these expansions, one then concludes the following statement for the Wronskian
between $f_+(\cdot,\lambda)$ and $f_-(\cdot,\lambda)$:
\[
W_\nu(\lambda)=i e^{i\nu\pi}\, \lambda^{1-2\nu} (W_{0,\nu} + O_{\C}(\lambda^\eps)) \text{\ \ as\ \ }\lambda\to0+
\]
Here $W_{0,\nu}$  is a real constant and $O_\C(\lambda^\eps)$ is complex valued and  of symbol type (meaning
that each derivative loses one power). Most importantly, $W_{0,\nu}=0$ if and only if zero is a resonance of $\calH_\nu$.
For the case of surfaces of revolutions, it is easy to exclude zero energy resonances of the associated Schr\"odinger
operator, at least for $\ell\ge1$. In fact, with $\calH_\ell$ denoting the operator obtained for fixed angular momentum $\ell\ge1$,
\[
\calH_{\ell} (r^{\frac12} e^{\pm \ell y})=0,\quad y(\xi)=\int_0^\xi \frac{d\eta}{r(\eta)}
\]
Because $y$ is odd, the smaller branch at $\xi=\infty$
has to be the larger one at $\xi=-\infty$ which places us in the nonresonant case.
It is perhaps worth mentioning that the potentials arising from surfaces of revolution do not need to be nonnegative (for positive
potentials it is evident that zero is not a resonance).
In fact, if $\calM$ has very large  curvature  then the potential can be negative.
We remark that for $\ell=0$ it is proved in~\cite{SSS1} that
\[
W_0(\lambda )=2\lambda \left( 1+ic_3+i\frac{2}{\pi}\log\lambda
\right)+O(\lambda^{\frac{3}{2}-\eps}) \text{\ \ as\ \ }\lambda\to0+
\]
On a technical level, the logarithmic term in $\lambda$ makes the
$\ell=0$ case somewhat harder to analyze than the cases $\ell\geq1$. Not surprisingly, in  proving dispersive estimates
for $-\Delta_{\Rd^2}+V$ one encounters similar logarithmic issues,
see~\cite{Sch1}.

In conclusion, we would like to stress that the estimates in~\cite{SSS2} produce constants that grow very rapidly in~$\ell$, somewhat
faster than $e^{\ell^2}$, to be precise. This is due to a number of sources. First, for the small energy analysis we just described
to work, one needs to chose the energy cut-off $\lambda_0=\lambda_0(\ell)$ to depend on~$\ell$ which already introduces large constants into the proof.
 Second, for energies $\lambda>\lambda_0(\ell)>0$ one uses a very crude method,
namely termwise estimation of a Born series which cannot distinguish the sign of the potential. Even replacing
the crude Born series by something more elaborate would not make much of a difference. Indeed, by the preceding discussion
the two manifolds in Figure~2 behave very differently as far as the dependence of the constant on~$\ell$ is concerned.

Since the {\em small energy matching} method outlined above cannot easily distinguish between these manifolds, we shall now discuss an approach
that is capable of differentiating between them, albeit only for large~$\ell$. For this reason, the finite $\ell$ analysis of~\cite{SSS1} and~\cite{SSS2} is needed in the proof of Theorem~\ref{thm2}.

\subsection{Summation over all angular momenta}\label{sec:suminell}

We shall now prove Theorem~\ref{thm2}.
We will follow~\cite{DSS2} and sketch how to obtain~\eqref{eq:schr_d2} and~\eqref{eq:schr_d3}, with the case of the wave equation being similar.
With $V_\ell$ as in~\eqref{eq:calHell}, we  claim the following bound:
\begin{equation}
\label{eq:claimL2}
\int_{-\infty}^\infty \la \xi\ran^{-2} \big| e^{it\calH_\ell} u(\xi)\big|^2\, d\xi \les \la t\ra^{-2} \ell^{4} \int_{-\infty}^\infty \la \xi\ran^{2} |u(\xi)|^2\, d\xi
\end{equation}
The proof of~\eqref{eq:claimL2} will be discussed below. Taking it for granted, suppose that $f$ is a Schwartz function on~$\calM$ and write
\[
f(\xi,\theta) = \sum_{\ell=-\infty}^\infty e^{i\ell\theta} f_\ell(\xi) = \sum_{\ell=-\infty}^\infty e^{i\ell\theta} r^{-\frac12}(\xi)  u_\ell(\xi)
\]
Then
\begin{align*}
e^{it\Delta_{\calM}} f &= \sum_{\ell=-\infty}^\infty e^{it\Delta_{\calM}} \big[e^{i\ell\theta} r^{-\frac12}(\xi) u_{\ell}(\xi)\big ] \\
&= \sum_{\ell=-\infty}^\infty   e^{i\ell\theta} r^{-\frac12}(\xi) \big[ e^{it\calH_\ell}  u_{\ell}\big](\xi)
\end{align*}
whence
\begin{align*}
& \big\| w_1 e^{it\Delta_{\calM}} f\big\|^2_{L^2(\calM)} \\
&= \frac{1}{2\pi }\!\int_{-\infty}^\infty \!\int_0^{2\pi} \!\!w_1^2(\xi) \Big| \!\sum_{\ell=-\infty}^\infty  \!\!\! e^{i\ell\theta} r^{-\frac12}(\xi)\! \big[ \!e^{it\calH_\ell}  u_{\ell}\big](\xi)  \Big|^2 \!\!r(\xi)\,d\xi d\theta \\
&\les \sum_{\ell=-\infty}^\infty \int_{-\infty}^\infty \la \xi \rangle^{-2} \Big| e^{it\calH_\ell} u_\ell (\xi)\Big|^2\, d\xi \\
&\les \sum_{\ell=-\infty}^\infty \la t\rangle^{-2} \la \ell\rangle^4 \int_{-\infty}^\infty \la \xi\ran^{2}  |f_\ell(\xi)|^2 r(\xi)\, d\xi \\
&\les \la t\rangle^{-2} \!\!\sum_{\ell=-\infty}^\infty \! \int_{-\infty}^\infty  \!\la \ell\rangle^4 w_{-1} (\xi)^2 \Big|\! \int_0^{2\pi}\! f(\xi,\theta) e^{-i\ell\theta}\,d\theta\Big|^2 r(\xi)\,d\xi \\
&\les \la t\rangle^{-2} \| w_{-1} (1-\del_\theta^2)f\|_{L^2(\calM)}^2
\end{align*}
which is~\eqref{eq:schr_d2}.
To prove \eqref{eq:claimL2},  it is clear from Theorem~\ref{thm1} that it suffices to consider $\ell$ large, say $|\ell|\ge \ell_0\gg1$.
Fixing such an~$\ell$,  one switches to a semi-classical representation, via the identity
\[
e^{it\calH_\ell} = e^{i\frac{t}{\hbar^2} \calH(\hbar)}, \qquad \calH(\hbar):=-\hbar^2\del_\xi^2+\hbar^2 V_\ell
\]
where $V_\ell$ is as in~\eqref{eq:calHell} and with $\hbar:=\ell^{-1}$. By construction, $V(\xi,\hbar):=\hbar^2 V_\ell(\xi)$ has the property
that its maximal height is now essentially fixed at $V_{\rm max}(\hbar)= V_{\rm max}(0) + O(\hbar^2)$ with $V_{\rm max}(0)\simeq 1$.
The essential property of the potential is that it has a unique nondegenerate maximum, i.e., it looks like the one on top in Figure~4.

For the remainder of this section, $\hbar$ will be small.
From the spectral representation one has
\begin{equation}
e^{i\frac{t}{\hbar^2} \calH(\hbar)} = \frac{2}{\pi} \hbar^{-2} \int_0^\infty e^{i\frac{t}{\hbar^2} E^2}
\Im \Big[
\frac{f_{+}(x,E;\hbar)f_{-}(x',E;\hbar)}{W(f_{+}(\cdot,E;\hbar),f_{-}(\cdot,E;\hbar))}
\Big] \, E\,dE    \label{eq:semievol}
\end{equation}
with $f_{\pm}$ being the outgoing Jost solutions for
the semi-classical operator $\calH(\hbar)$ which means that
\begin{align*}
 (-\hbar^2 \del_x^2 + V(x;\hbar))f_{\pm}(x,E;\hbar) &= E^2 f_{\pm}(x,E;\hbar) \\
 f_{\pm}(x,E;\hbar) &\sim e^{\pm \prefac Ex} \qquad x\to \pm \infty
\end{align*}
With $\epsilon>0$ fixed and small (independently of $\hbar$), one now considers energies $0<E<\epsilon$ (low), $\epsilon<E<100$ (intermediate), and $E>100$ (large) separately.
The middle interval is further split into energies $\epsilon<E<V_{\rm max}(0)-\epsilon$, $V_{\rm max}(0)-\epsilon<E<100$, respectively. The latter interval
is to some extent the most important of all as it contains the nondegenerate maximum of the potential $V(\hbar)$. We shall see that it is precisely this maximum which
determines the number of derivatives lost in the process of summing over~$\ell$.

The easiest region is $E>100$. Indeed, for these energies the potential is essentially negligible and a classical WKB approximation reduces matters
to the free case. This means (again heuristically) that~\eqref{eq:schr_d2} is a consequence of the $L^1\to L^\infty(\Rd^2)$ bound on $e^{it\Delta_{\Rd^2}}$ which
explains the weights $w_{1+\epsilon}$.

\subsubsection{WKB in the doubly asymptotic limit $\hbar\to0$ and $E\to0$}
The low-lying energies $0<E<\epsilon$ are also treated by means of WKB, but there one faces the difficulty that the WKB approximation of the generalized eigenfunctions
needs to be accurate in the entire range $0<E<\epsilon$ and $0<\hbar<\hbar_0$. There exists an extensive literature on the validity of the WKB approximation provided
the energy stays away from zero, i.e., $E>E_0>0$ uniformly in~$\hbar$, see for example~\cite{Olver} or Ramond~\cite{Ram}.
However, the issue of controlling all errors in the WKB method uniformly in small $\hbar$ {\em and small}~$E$ does not seem to have been considered before.
For the problem of sending $E\to0$ it is of
course most relevant that the potential has the (critical) inverse square decay, as was already apparent in the discussion of the matching method in the previous section.

This  lead
Costin, Schlag, Staubach, and Tanveer~\cite{CSST} to  carry out a systematic analysis of this two-parameter WKB problem for inverse square potentials. More
specifically, they considered the scattering matrix
\[
 \Sigma(E;\hbar) = \left [ \begin{matrix}  t(E;\hbar) &  r_-(E;\hbar) \\ r_+(E;\hbar) & t(E;\hbar) \end{matrix} \right ]
= \left [ \begin{matrix} \Sigma_{11}(E;\hbar) & \Sigma_{12}(E;\hbar) \\ \Sigma_{21}(E;\hbar) & \Sigma_{22}(E;\hbar) \end{matrix} \right ]
\]
for the semiclassical operator
\[
P(x,\hbar D) := -\hbar^2 \frac{d^2}{dx^2} + V(x)
\]
with inverse square $V$ (asymptotically, as $|x|\to\infty$) and obtained the following result.

\begin{thm}\label{thm:main}
Let $V\in C^\infty(\Rd)$ with $V>0$ and $V(x)=\mu_{\pm}^2 x^{-2} + O(x^{-3})$
as $x\to\pm\infty$ where $\mu_+\ne0$, $\mu_-\ne0$
and $\partial_x^k O(x^{-3}) = O(x^{-3-k})$ for all $k\ge0$.
Denote
\begin{equation}\label{eq:V0_def}
 V_0(x;\hbar):= V(x) + \frac{\hbar^2}{4}\la x\ra^{-2}
\end{equation}
and let  $E_0>0$ be such
that for all $0<E<E_0$ and $0<\hbar<1$, $V_0(x;\hbar)=E$ has a unique pair of solutions,
which we denote by $x_2(E;\hbar)<0<x_1(E;\hbar)$. Define
\begin{equation}\label{eq:STdef} \begin{split}
S(E;\hbar) &:= \int_{x_2(E;\hbar)}^{x_1(E;\hbar)} \sqrt{V_0(y;\hbar)-E}\, dy \\
 T_+(E;\hbar) &:= x_1(E;\hbar)\sqrt{E} - \int_{x_1(E;\hbar)}^\infty \big(\sqrt{E-V_0(y;\hbar)}-\sqrt{E}\big)\, dy \\
 T_-(E;\hbar) &:= -x_2(E;\hbar)\sqrt{E} -  \int_{-\infty}^{x_2(E;\hbar)} \big(\sqrt{E-V_0(y;\hbar)}-\sqrt{E}\big)\, dy
\end{split}\end{equation}
as well as $T(E;\hbar):= T_+(E;\hbar) + T_-(E;\hbar)$.
Then for all $0<\hbar<\hbar_0$ where  $\hbar_0=\hbar_0(V)>0$ is small and $0<E<E_0$
\begin{equation}\label{eq:sentries}
\begin{aligned}
 \Sigma_{11}(E;\hbar) &= e^{-\frac{1}{\hbar}(S(E;\hbar)+iT(E;\hbar))} (1+ \hbar\,\sigma_{11}(E;\hbar)) \\
 \Sigma_{12} (E;\hbar)  &= -i e^{-\frac{2i}{\hbar} T_+(E;\hbar)} (1+\hbar\, \sigma_{12}(E;\hbar))
\end{aligned}
\end{equation}
where the correction terms satisfy the bounds
\begin{equation}
\label{eq:errors}
 |\partial_E^k\, \sigma_{11}(E;\hbar)|+|\partial_E^k\, \sigma_{12}(E;\hbar)| \le C_k\, E^{-k}\quad\forall\; k\ge0,
\end{equation}
with a constant $C_k$ that only depends on $k$ and $V$. The same conclusion holds if instead of~\eqref{eq:V0_def} we
were to define $V_0$ as $V_0:=V+\hbar^2 V_1$ with $V_1\in C^\infty(\Rd)$,
$V_1(x;\hbar)=\frac14\la x\ra^{-2}+O(x^{-3})$ as $x\to\pm\infty$ with $\partial_x^k O(x^{-3}) = O(x^{-3-k})$ for all $k\ge0$
and uniformly in~$0<\hbar\ll1$.
\end{thm}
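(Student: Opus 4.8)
The plan is to build the outgoing Jost solutions $f_{\pm}(x,E;\hbar)$ of $P(x,\hbar D)f=Ef$, normalized so that $f_{\pm}(x,E;\hbar)\sim e^{\pm\frac{i}{\hbar}\sqrt{E}\,x}$ as $x\to\pm\infty$, and to read off $\Sigma_{11}=t$ and $\Sigma_{12}=r_-$ from the connection coefficients expressing $f_{+}$ in a basis adapted to $-\infty$ --- equivalently from the Wronskian $W(f_{+},f_{-})$ and a companion Wronskian. This is exactly the matching scheme of Section~\ref{sec:suminell} and of Proposition~\ref{prop:wronski}, but it must now be carried out uniformly in the two small parameters $0<E<E_0$ and $0<\hbar<\hbar_0$. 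As these energies lie far below the barrier top $V_{\rm max}(0)\simeq1$, the governing classical picture is tunnelling: $V_0(x;\hbar)=E$ has exactly two turning points $x_2(E;\hbar)<0<x_1(E;\hbar)$, crossing the forbidden interval $(x_2,x_1)$ costs the exponentially small factor $e^{-S/\hbar}$ with $S$ as in~\eqref{eq:STdef}, and on the allowed tails $x>x_1$, $x<x_2$ the Jost solutions are oscillatory of amplitude $(E-V_0)^{-1/4}$ and phase $\hbar^{-1}\!\int\sqrt{E-V_0}$, whence the regularized phases $T_{\pm}$. Replacing $V$ by the Langer-corrected $V_0=V+\tfrac{\hbar^2}{4}\la x\ra^{-2}$ is forced for the reason recorded after~\eqref{eq:ujnu}: the naive WKB ansatz built from $V$ represents the inverse-square tails with exponents $\tfrac12\pm\hbar^{-1}\mu_{\pm}$ rather than the exact $\tfrac12\pm\hbar^{-1}\sqrt{\mu_{\pm}^2+\hbar^2/4}$ and, equivalently, has a Liouville--Green error control function that just fails to be integrable at $x=\pm\infty$; the $\tfrac{\hbar^2}{4}\la x\ra^{-2}$ shift repairs both. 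Since only the leading coefficient $\tfrac14$ enters this integrability statement, the closing clause of the theorem --- replacing $\tfrac{\hbar^2}{4}\la x\ra^{-2}$ by $\hbar^2V_1$ with $V_1=\tfrac14\la x\ra^{-2}+O(x^{-3})$ --- follows by absorbing the $O(x^{-3})$ discrepancy into the error terms.

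The construction splits into an interior part and two exterior tails, glued at fixed points $\pm x_*$. On $|x|\le x_*$, where $V$ is a general bump and, $E$ being small, $V_0-E>0$ throughout with no turning points, a genuine $\hbar$-WKB representation using $V_0$ in place of $V$ --- valid with Olver-type error bounds whose error control function is of bounded variation precisely because of the Langer correction --- describes all solutions. On $x\ge x_*$ (and symmetrically on $x\le -x_*$), where $V=\mu_+^2x^{-2}+O(x^{-3})$, one compares $f_+$ to the exactly solvable Jost solution of $-\hbar^2\partial_x^2+\mu_+^2x^{-2}$, namely $\sqrt{x}\,H^{(+)}_{\nu_+}(\hbar^{-1}\sqrt{E}\,x)$ with Bessel order $\nu_+=\hbar^{-1}\sqrt{\mu_+^2+\hbar^2/4}\asymp\hbar^{-1}$, the remainder $V-\mu_+^2x^{-2}=O(x^{-3})$ being removed by a Volterra iteration which, thanks to $\partial_x^kO(x^{-3})=O(x^{-3-k})$, is small uniformly in $(E,\hbar)$. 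The single turning point $x_1(E;\hbar)\approx\mu_+E^{-1/2}$ lies in this exterior region and is captured by Olver's uniform asymptotics of the large-order Hankel function across its transition point $\hbar^{-1}\sqrt{E}\,x\approx\nu_+$ --- Airy in a shrinking neighbourhood of $x_1$, exponential WKB for $x_*\ll x\ll x_1$, oscillatory $\sim e^{i\sqrt{E}x/\hbar}$ for $x\gg x_1$. Composing the interior and exterior descriptions, matching at $\pm x_*$, and computing the two relevant Wronskians yields precisely the amplitudes and phases in~\eqref{eq:STdef}--\eqref{eq:sentries}: the factor $e^{-S/\hbar}$ in $\Sigma_{11}$ records the barrier crossing, the factor $-i=e^{-i\pi/2}$ in $\Sigma_{12}$ the reflection phase at a turning point, and $e^{-2iT_+/\hbar}$ the round trip on the allowed tail $(x_1,\infty)$.

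To obtain the symbol-type bounds $\partial_E^k\sigma_{ij}=O(E^{-k})$ of~\eqref{eq:errors} one runs every Volterra iteration above in a Banach space of functions obeying symbol estimates in $E$; differentiating the Bessel and Airy arguments, which scale like $\hbar^{-1}\sqrt{E}\,x$ and $\hbar^{-2/3}E^{1/2}(x-x_1)$, manufactures the negative powers of $E$, and one checks that the iteration reproduces rather than worsens them. The main obstacle --- and the reason this is not a routine application of classical turning-point theory --- is exactly this uniformity as $E\to0^{+}$: then $x_1(E;\hbar)\to+\infty$ and $x_2(E;\hbar)\to-\infty$, the forbidden region absorbs the whole line, and the turning points migrate into the region where $V$ decays at the \emph{critical} rate $x^{-2}$, so the Airy transition neighbourhoods shrink and must be forced to overlap the Hankel model region while every error control function remains integrable uniformly in both parameters. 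It is precisely here that the criticality of the $x^{-2}$ tail, together with the Langer correction built into $V_0$, is indispensable; it is also the mechanism producing the logarithmic growth of $S(E;\hbar)$ in $1/E$ that is responsible, back in Section~\ref{sec:suminell}, for the power-law flatness $\lambda^{1+2\nu}$ of the spectral measure of $\calH_\ell$.
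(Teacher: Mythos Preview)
Your sketch is sound in its overall logic, but it follows a genuinely different route from the one the paper outlines. The paper, following~\cite{CSST}, does \emph{not} split at a fixed $\pm x_*$ and perturb off the exact Bessel operator on the tails. Instead it performs the Langer transform~\eqref{eq:zeta} on each half-line, sending the moving turning point $x_1(E;\hbar)$ to the fixed origin $\zeta=0$ and reducing the equation to the perturbed Airy problem~\eqref{eq:Airy}. The entire uniformity in $(E,\hbar)$ is then encoded in the single assertion that the new potential $\tilde V(\zeta,E;\hbar)$ is well-behaved; Propositions~\ref{prop:AiryI} and~\ref{prop:AiryII} build the fundamental systems directly out of $\Ai,\Bi$ with multiplicative corrections $a_j,b_j$ obeying the stated symbol bounds, and the scattering matrix is read off from those. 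No large-order Hankel asymptotics enter. (The paper does note, after Proposition~\ref{prop:AiryII}, that~\cite{CDST} pursues a Bessel normal form via a different Liouville--Green transform; your scheme is closer in spirit to that alternative, though you perturb off the exact Bessel operator rather than change variables to a Bessel normal form.)

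What each approach buys: the Langer--Airy route turns the two-parameter problem into a one-parameter one by making the turning point stationary, so the delicate limit $E\to0$ (which sends $x_1\to\infty$) becomes a statement about $\tilde V$ at $\zeta\to\zeta_0$ and $\zeta\to+\infty$. Your route is more modular and offloads the turning-point analysis to Olver's uniform asymptotics for $H^{(+)}_{\nu_+}$ at order $\nu_+\sim\hbar^{-1}$, but you then owe a verification that the Volterra iteration against the large-order Bessel Green's function is small uniformly in $\hbar$---this is where the work hides in your scheme, and you should flag it rather than assert it.

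One small correction: on the compact interior $|x|\le x_*$ the WKB error control function has bounded variation automatically, with or without the Langer shift; the shift is needed on the \emph{tails}, exactly for the exponent reason you state ($\tfrac12\pm\hbar^{-1}\mu_\pm$ versus $\tfrac12\pm\hbar^{-1}\sqrt{\mu_\pm^2+\hbar^2/4}$). In your construction that correction is already built into the Bessel order $\nu_\pm$, so the clause ``precisely because of the Langer correction'' in your interior step is misplaced.
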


Note the correction of the original potential by $\frac{\hbar^2}{4}\la x\ra^{-2}$ in~\eqref{eq:V0_def}. Without this correction
the errors $\sigma_{11}$ etc.~diverge as $E\to0$. The proof of this result of course requires a careful analysis of the Jost solutions
which is then needed in the analysis of the stationary phase analysis of~\eqref{eq:semievol}.

The analysis of the Jost solutions is based on the Liouville-Green transform, which we now recall (see~\cite{Olver}).
Given any second order equation $f''(x)=Q(x)f(x)$ on some interval $I$,
and  any diffeomorphism $w:I\to J$ onto some interval $J$, define $g(w):= (w'(x))^{\frac12} f(x)$ where $w=w(x)$. Then by the chain rule,
$f''=Qf$ is the same as $g''(w) = \tilde Q(w) g(w)$ where
\begin{align*}
 \tilde Q(w) &:= \frac{Q(x)}{(w'(x))^2} - (w'(x))^{-\frac32} \del_x^2 (w'(x))^{-\frac12} \\
 &= \frac{Q(x)}{(w'(x))^2} - \frac34 \frac{(w''(x))^2}{(w'(x))^4} + \frac12 \frac{w'''(x)}{(w'(x))^2}
\end{align*}
To apply this transformation, one chooses $w$ so that
\begin{equation}\label{eq:QQ0}
\frac{Q(x)}{(w'(x))^2}=Q_0(w)
\end{equation}
 where $Q_0$ is some {\em normal form}. Then
the problem becomes
\begin{equation}\label{eq:VLG}
\begin{aligned}
g''(w) &= Q_0(w) g(w) - V(w) g(w), \\
 V(w) &:=  \frac34 \frac{(w''(x))^2}{(w'(x))^4} - \frac12 \frac{w'''(x)}{(w'(x))^2}
 \end{aligned}
\end{equation}
where $V$ is treated as a perturbation. This is only admissible if $Q_0$ is in some suitable sense
close to~$Q$. The determination of~$Q_{0}$ is done on a case by case basis.  For example, if $Q$ does not vanish on~$I$, then one can take $Q_0=\sign(Q)$ which leads
to the classical WKB ansatz, i.e.,
\[
Q^{-\frac14}(x) e^{\pm \int^x_{x_0} \sqrt{Q}(y)\, dy} \text{\ \ or\ \ } |Q|^{-\frac14}(x) e^{\pm i\int^x_{x_0} \sqrt{|Q|}(y)\, dy}
\]
depending on whether $Q>0$ or $Q<0$, respectively.
 If $Q$ does vanish at $x_0\in I$ with $Q'(x_0)\ne0$, then one maps $x_0$ to $w=0$ and chooses $Q_0(w)=w$. In other words, the comparison
equation is the Airy equation. The equation for $w$ in that case is $w(x) w'(x)^2 = Q(x)$ which yields
\begin{equation}\label{eq:Langer}
w(x)=\sign(x-x_0) \Big| \frac32 \int_{x_0}^x \sqrt{|Q(y)|}\, dy\Big|^{\frac23}
\end{equation}
which is known as the Langer transform~\cite{Olver}. It is easy to check that  $w$ is (locally around $x_0$) smooth (or analytic) provided  $Q$ is  smooth (or analytic).
It is precisely this Langer transform which is used in~\cite{CSST}, where it is written as follows for $x\ge0$
\begin{align*}
 \zeta&=\zeta(x,E;\hbar)\\
 &:= \sign (x-x_1(E;\hbar)) \Big|\frac32 \int_{x_1(E;\hbar)}^x
\sqrt{|V_0(x;\hbar)-E|}\, d\eta\Big|^{\frac23},
\end{align*}
with $x_1(E;\hbar)>0$ being the unique turning point (for $E$ small). The equation transforms as follows.

\begin{lemma}\label{lem:langer}
There exists $E_0=E_0(V)>0$ so that for all $0<E<E_0$ one has the following properties:
the equation $V_0(x;\hbar)-E=0$ has a unique (simple) solution on $x>0$
which we denote by $x_1=x_1(E;\hbar)$. With $Q_0:=V_{0}-E$
\begin{align}
  \label{eq:zeta}
   \zeta &=\zeta(x,E;\hbar) \\
   &:= \sign (x-x_1(E;\hbar)) \Big|\frac32 \int_{x_1(E;\hbar)}^x
\sqrt{|Q_0(u,E;\hbar)|}\, du\Big|^{\frac23} \nn
\end{align}
defines a smooth change of variables $x\mapsto\zeta$
for all $x\ge0$. Let $q:=-\frac{Q_0}{\zeta}$. Then  $q>0$, $\frac{d\zeta}{dx}=\zeta'=\sqrt{q}$, and
\[
-\hbar^2f''+(V-E)f= 0
\]
transforms into
\begin{equation}\label{eq:Airy}
-\hbar^2 \ddot w(\zeta) = (\zeta+\hbar^2 \tilde V(\zeta,E;\hbar))w(\zeta)
\end{equation}
under $w= \sqrt{\zeta'} f = q^{\frac14} f$. Here $\dot{\
}=\frac{d}{d\zeta}$ and
\[
\tilde V := \frac{1}{4} q^{-1} \langle x\rangle ^{-2} - q^{-\frac14} \frac{d^2
q^{\frac14}}{d\zeta^2}
\]
\end{lemma}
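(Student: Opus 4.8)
\emph{Proof proposal.} The plan is to establish the three assertions of Lemma~\ref{lem:langer} in order: (i) existence, uniqueness and simplicity of the turning point $x_1$; (ii) smoothness and strict monotonicity of the Langer map $x\mapsto\zeta$ on $[0,\infty)$ together with the positivity $q>0$ and the identity $\zeta'=\sqrt q$; (iii) the reduction of $-\hbar^2 f''+(V-E)f=0$ to \eqref{eq:Airy}. Steps (i) and (iii) are essentially bookkeeping; the one point requiring care is the behavior at the turning point in (ii).

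For (i), since $V>0$ and $V(x)=\mu_+^2 x^{-2}+O(x^{-3})$ as $x\to\infty$, there is $R=R(V)$, independent of $\hbar\in(0,1)$, with $V_0'(x;\hbar)<0$ for $x\ge R$, while $V_0(x;\hbar)\to0$ as $x\to\infty$. Setting $E_0:=\inf_{0\le x\le R}V(x)>0$, one checks that for $0<E<E_0$ one has $V_0>E$ on $[0,R]$ and $V_0$ strictly decreasing on $[R,\infty)$ from a value exceeding $E_0$ down to $0$; hence $Q_0:=V_0(\cdot;\hbar)-E$ vanishes at exactly one point $x_1=x_1(E;\hbar)>R$, with $Q_0'(x_1)=V_0'(x_1;\hbar)<0$ so the zero is simple, and $Q_0>0$ on $[0,x_1)$, $Q_0<0$ on $(x_1,\infty)$.

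For (ii), off $x_1$ the integrand $\sqrt{|Q_0|}$ in \eqref{eq:zeta} is smooth and nonvanishing, so $\zeta$ is a smooth function of $x$ there whose sign is $\sign(x-x_1)$; combined with the sign of $Q_0$ this gives $q=-Q_0/\zeta>0$ for $x\ne x_1$. Near $x_1$ write $Q_0(x)=(x-x_1)g(x)$ with $g$ smooth and $g(x_1)=Q_0'(x_1)<0$, so $|Q_0(x)|=|x-x_1|(-g(x))$; the substitution $u=x_1+(x-x_1)s$, $s\in[0,1]$, in the integral of \eqref{eq:zeta} then yields $\frac32\int_{x_1}^x\sqrt{|Q_0|}\,du=\sign(x-x_1)\,|x-x_1|^{3/2}h(x)$ with $h(x):=\frac32\int_0^1 s^{1/2}\big(-g(x_1+(x-x_1)s)\big)^{1/2}\,ds$ smooth and strictly positive near $x_1$. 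Hence $\zeta(x)=(x-x_1)h(x)^{2/3}$ near $x_1$, which is smooth with $\zeta'(x_1)=h(x_1)^{2/3}>0$, and $q=-Q_0/\zeta=-g/h^{2/3}$ extends smoothly and positively across $x_1$. Differentiating $\zeta^{3/2}=\frac32\int_{x_1}^x\sqrt{-Q_0}\,du$ for $x>x_1$ (and the mirror relation for $x<x_1$) gives $\zeta(\zeta')^2=-Q_0$, i.e.\ $(\zeta')^2=-Q_0/\zeta=q$, so $\zeta'=\sqrt q>0$ on all of $[0,\infty)$; thus $x\mapsto\zeta$ is a smooth, strictly increasing change of variables on $[0,\infty)$.

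For (iii) one runs the Liouville--Green computation recalled before \eqref{eq:QQ0} with the Airy normal form $Q_0(\zeta)=\zeta$. Putting $f(x)=q(x)^{-1/4}w(\zeta(x))$ and using $\zeta'=\sqrt q$, the first-derivative terms cancel and $f''=(q^{-1/4})_{xx}\,w+q^{3/4}\ddot w$. Inserting this into $-\hbar^2 f''+(V-E)f=0$, using $V-E=Q_0-\tfrac{\hbar^2}{4}\la x\ra^{-2}=-q\zeta-\tfrac{\hbar^2}{4}\la x\ra^{-2}$, and dividing by $q^{3/4}$ gives $-\hbar^2\ddot w=\big(\zeta+\hbar^2[\tfrac14 q^{-1}\la x\ra^{-2}+q^{-3/4}(q^{-1/4})_{xx}]\big)w$; the elementary identity $q^{-3/4}(q^{-1/4})_{xx}=-q^{-1/4}\,\tfrac{d^2}{d\zeta^2}q^{1/4}$, obtained by rewriting $x$-derivatives via $\tfrac{d}{dx}=\sqrt q\,\tfrac{d}{d\zeta}$, then produces exactly \eqref{eq:Airy} with the stated $\tilde V$. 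Smoothness of $\tilde V$ on the corresponding $\zeta$-interval follows from (ii) and the smoothness of $q>0$ and $\la x\ra^{-2}$. The main obstacle is the turning-point step in (ii): one must see that the $|x-x_1|^{3/2}$ singularity of the action integral is exactly offset by the power $2/3$ in \eqref{eq:zeta}, so that $\zeta$ and $q$ are genuinely smooth across $x_1$; the rescaling $u=x_1+(x-x_1)s$ makes this transparent.
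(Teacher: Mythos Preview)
Your proof is correct and follows exactly the approach the paper outlines: the paper does not give a separate proof of this lemma but instead presents the general Liouville--Green framework \eqref{eq:QQ0}--\eqref{eq:VLG}, notes that smoothness of the Langer map across a simple turning point is ``easy to check,'' and then records the lemma as the specialization to $Q_0=V_0-E$ with Airy normal form, referring to \cite{CSST} and \cite{Olver} for details. Your argument fills in precisely those details---the rescaling $u=x_1+(x-x_1)s$ to exhibit $\zeta=(x-x_1)h^{2/3}$ smoothly, the differentiation of $\zeta^{3/2}$ to get $\zeta(\zeta')^2=-Q_0$, and the direct LG computation with the identity $q^{-3/4}(q^{-1/4})_{xx}=-q^{-1/4}\ddot{(q^{1/4})}$---and all steps check out.
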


 The asymptotic description of the Jost solutions is found by matching the Airy approximations at
the turning point $w=0$.  A fundamental solution of the transformed equation (i.e., in the $\zeta$ variable)
to the left of the turning point is described in terms of the Airy function $\Ai, \Bi$ by the following result from~\cite{CSST}.

\begin{prop}
  \label{prop:AiryI} Let $\hbar_0>0$ be small. A fundamental system of solutions to~\eqref{eq:Airy} in the range
$\zeta\le0$ is given by
\begin{equation}\nonumber
\begin{aligned}
  \phi_1(\zeta,E,\hbar) &=
  \Ai(\tau) [1+\hbar a_1(\zeta,E,\hbar)] \\
  \phi_2(\zeta,E,\hbar) &=
  \Bi(\tau) [1+\hbar a_2(\zeta,E,\hbar)]
\end{aligned}
\end{equation}
 with
$\tau:=-\hbar^{-\frac23}\zeta$.
 Here $a_1, a_2$ are smooth, real-valued, and they satisfy
the bounds, for all $k\ge0$ and $j=1,2$, and with $\zeta_0:=\zeta(0,E)$,
\begin{align}
  |\partial_E^k  a_j(\zeta,E,\hbar)| &\less  E^{-k} \min\big[\hbar^{\frac13} \la
  \hbar^{-\frac23}
  \zeta\ra^{\frac12}, 1\big] \label{eq:aj_est} \\
 |\partial_E^k \partial_\zeta a_j(\zeta,E,\hbar)| &\les E^{-k} \Big[\hbar^{-\frac13} \la
\hbar^{-\frac23}\zeta\ra^{-\frac12}\chi_{[-1\le \zeta\le0]} \nn \\
&\qquad\qquad +
|\zeta|^{\frac12}  \chi_{[ \zeta_0\le \zeta\le -1]}\Big ] \nn 
\end{align}
uniformly in the parameters $0<\hbar<\hbar_0$, $0<E<E_0$.
\end{prop}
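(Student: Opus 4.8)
The plan is to realize $\phi_1,\phi_2$ as explicitly controlled perturbations of the exact Airy functions through a Volterra (variation-of-parameters) scheme built on the Airy comparison equation, and to read the bounds in~\eqref{eq:aj_est} off from the size of the associated error-control integrals.

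\emph{Reduction.} First I would set $\tau:=-\hbar^{-2/3}\zeta$, so that $\tau\ge0$ on the range $\zeta\le0$ of interest. In the variable $\tau$ equation~\eqref{eq:Airy} reads, by the chain rule,
\[
\del_\tau^2 w=\bigl(\tau+f(\tau;E,\hbar)\bigr)w,\qquad f(\tau;E,\hbar):=-\hbar^{4/3}\,\tilde V\bigl(-\hbar^{2/3}\tau,E;\hbar\bigr).
\]
Before anything else one must control $f$, i.e.\ $\tilde V$. By Lemma~\ref{lem:langer} the turning point $\zeta=0$ is simple (there $Q_0$ vanishes linearly), so $q$ is smooth and bounded away from zero near it; feeding the behaviour of $q$ and of the change of variables $x\mapsto\zeta$ into $\tilde V=\frac14 q^{-1}\la x\ra^{-2}-q^{-1/4}\frac{d^2}{d\zeta^2}q^{1/4}$ produces bounds of the schematic form $|\partial_E^k\tilde V(\zeta,E;\hbar)|\les E^{-k}\la\zeta\ra^{-2}$, uniformly in $0<\hbar<\hbar_0$ on $\zeta_0\le\zeta\le0$ (with $\zeta_0=\zeta(0,E)$). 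The $E^{-k}$ enters only because $E$ appears through $Q_0=V_0-E$ and through the turning point $x_1(E;\hbar)\sim E^{-1/2}$, on which one differentiates; and it is exactly the $\frac{\hbar^2}{4}\la x\ra^{-2}$ (equivalently $\hbar^2 V_1$) correction in~\eqref{eq:V0_def} that keeps $\tilde V$, hence the error terms in the Airy comparison, from diverging as $E\to0$ (compare Theorem~\ref{thm:main}). Thus $|\partial_E^k f(\tau;E,\hbar)|\les E^{-k}\hbar^{4/3}\la\hbar^{2/3}\tau\ra^{-2}$.

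\emph{Volterra equations.} Let $G(\tau,s):=\pi\bigl(\Ai(s)\Bi(\tau)-\Ai(\tau)\Bi(s)\bigr)$, the Green's kernel of $\del_\tau^2-\tau$ normalized by $\Ai\Bi'-\Ai'\Bi=1/\pi$; it satisfies $G(\tau,\tau)=0$, $\del_\tau G(\tau,s)|_{\tau=s}=1$, $\del_\tau^2 G=\tau G$. I would build $\phi_2=\Bi\,(1+\hbar a_2)$ as the $\Bi$-dominant solution, anchored at the turning point:
\[
\hbar\,a_2(\tau)\,\Bi(\tau)=-\int_0^{\tau} G(\tau,s)\,f(s)\,\Bi(s)\,\bigl(1+\hbar\,a_2(s)\bigr)\,ds,
\]
and $\phi_1=\Ai\,(1+\hbar a_1)$ as the $\Ai$-recessive solution, with the analogous equation integrated from the bottom of the forbidden region (large $\tau$) inward. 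That these solve $\del_\tau^2 w=(\tau+f)w$ with the stated form is the elementary computation above; the direction of integration is forced by requiring the normalized kernels to stay bounded, which holds because for $0\le s\le\tau$ one has $|G(\tau,s)\Bi(s)/\Bi(\tau)|\les\la s\ra^{-1/2}$ and for $0\le\tau\le s$ one has $|G(\tau,s)\Ai(s)/\Ai(\tau)|\les\la s\ra^{-1/2}$, both from positivity and monotonicity of $\Ai,\Bi$ on $[0,\infty)$.

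\emph{Iteration, and the main obstacle.} Dividing by $\hbar$, each equation becomes a fixed-point equation for $a_j$ whose source term is controlled, via the preceding two steps, by an error-control function of the schematic form $\hbar^{1/3}\int\la s\ra^{-1/2}\la\hbar^{2/3}s\ra^{-2}\,ds$ over the relevant sub-interval; substituting $u=\hbar^{2/3}s$ and splitting at the turning-point scale $s\sim\hbar^{-2/3}$ (that is, $|\zeta|\sim1$) shows this is $\les\min\bigl[\hbar^{1/3}\la\hbar^{-2/3}\zeta\ra^{1/2},\,1\bigr]$. As the equations are of Volterra type, once this quantity is small the Neumann series converges and its leading term already realizes that size — this is the first estimate in~\eqref{eq:aj_est}. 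Differentiating in $\zeta$ pulls $\del_\tau G$ into the kernel, which in the turning-point layer $|\zeta|\les1$ carries the factor $\hbar^{-1/3}\la\hbar^{-2/3}\zeta\ra^{-1/2}$ and on $\zeta_0\le\zeta\le-1$ grows like $|\zeta|^{1/2}$, giving the second estimate; and differentiating in $E$, inducting on the order and using the bound on $\partial_E^j f$ from the first step, propagates the factor $E^{-k}$. The difficulty is none of this Volterra bookkeeping as such, but enforcing uniformity as $E\to0$ simultaneously with $\hbar\to0$: the Airy comparison is run at a turning point $x_1(E;\hbar)\sim E^{-1/2}$ that recedes to infinity, and the critical inverse-square tail of $V$ makes the small-energy estimates on $\tilde V$ — hence on the error-control functions — borderline. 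Securing those $E$-uniform bounds, through the turning-point region and all the way to the far end $\zeta=\zeta_0$, which is precisely what the $\hbar^2 V_1$ modification of $V_0$ is designed to permit, is the crux, and is carried out in detail in~\cite{CSST}.
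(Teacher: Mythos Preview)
Your proposal is correct and follows precisely the standard Langer--Airy comparison via Volterra iteration that underlies this result; the paper itself does not supply a proof but only states the proposition as a result from~\cite{CSST}, and your outline matches that approach (reduction to $\partial_\tau^2 w=(\tau+f)w$, anchoring the $\Bi$-dominant solution at the turning point and the $\Ai$-recessive one at the far end, and reading the bounds off the Olver-type error-control integral for the kernel $G(\tau,s)\Ai(s)/\Ai(\tau)$ or $G(\tau,s)\Bi(s)/\Bi(\tau)$). Your identification of the crux---uniform control of $\tilde V$ as $E\to0$, made possible by the $\frac{\hbar^2}{4}\la x\ra^{-2}$ correction in~$V_0$---is exactly the point emphasized in the paper's surrounding discussion and in~\cite{CSST}.
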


Note that from the standard asymptotic behavior of the Airy functions, viz.,
\[\begin{split}
\Bi(x) &= \pi^{-\frac12} x^{-\frac14} e^{\frac23 x^{\frac32}}
\big[1+O(x^{-\frac32})\big] \text{\ \ as\ \ } x\to\infty
\\\Bi(x)&\ge\Bi(0) >0\quad \forall\;x\ge0 \\
\Ai(x) &= \frac12\pi^{-\frac12} x^{-\frac14} e^{-\frac23
x^{\frac32}} \big[1+O( x^{-\frac32})\big] \text{\ \ as\ \ }
x\to\infty
\\\Ai(x)& >0\quad \forall\;x\ge0
\end{split}\]
the action integral appears naturally in this context, cf.~\eqref{eq:zeta}.
To the right of the turning point one has the following oscillatory basis.

\begin{prop}
  \label{prop:AiryII} Let $\hbar_0>0$ be small. In the range
$\zeta\ge 0$ a basis of solutions to~\eqref{eq:Airy} is given by
\begin{equation}\nonumber
\begin{aligned}
  \psi_1(\zeta,E;\hbar) &=
  (\Ai(\tau)+i\Bi(\tau)) [1+\hbar b_1(\zeta,E;\hbar)] \\
  \psi_2(\zeta,E;\hbar) &=
  (\Ai(\tau)-i\Bi(\tau)) [1+\hbar b_2(\zeta,E;\hbar)]
\end{aligned}
\end{equation}
 with
$\tau:=-\hbar^{-\frac23}\zeta$ and where
$b_1, b_2$  are smooth, complex-valued, and satisfy the bounds
for all $k\ge0$, and $j=1,2$
\begin{equation}\label{eq:bj_est}
\begin{aligned}
 | \partial_E^k\, b_j(\zeta,E;\hbar)| &\le C_{k}\, E^{-k} \la\zeta\ra^{-\frac32} \\
|\partial_\zeta\partial_E^k b_j(\zeta,E)| &\le C_{k}\, E^{-k} \hbar^{-\frac13} \la \hbar^{-\frac23} \zeta \ra^{-\frac12} \la \zeta\ra^{-2}
\end{aligned}
\end{equation}
uniformly in
the parameters $0<\hbar<\hbar_0$, $0<E<E_0$, $\zeta\ge0$.
\end{prop}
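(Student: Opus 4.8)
The plan is to build $\psi_1$ by a Volterra iteration treating the term $\hbar^2\tilde V$ in \eqref{eq:Airy} as a perturbation of the exact Airy equation $-\hbar^2\ddot w=\zeta w$; the solution $\psi_2$ is then $\overline{\psi_1}$, because for real $\zeta,E,\hbar$ complex conjugation maps \eqref{eq:Airy} to itself and $\overline{(\Ai+i\Bi)(\tau)}=(\Ai-i\Bi)(\tau)$, so $b_2=\overline{b_1}$. In the oscillatory range $\zeta\ge0$ (hence $\tau:=-\hbar^{-2/3}\zeta\le0$) the unperturbed equation has the outgoing/incoming fundamental system $\phi_\pm(\zeta):=(\Ai\pm i\Bi)(\tau)$, with constant Wronskian $W[\phi_+,\phi_-]=\tfrac{2i}{\pi}\hbar^{-2/3}\ne0$; from the standard oscillatory asymptotics of $\Ai,\Bi$ on the negative axis, together with the fact that $\Ai$ and $\Bi$ have no common zero, one has the two-sided bounds $|\phi_\pm(\zeta)|\sim\la\tau\ra^{-1/4}$ and $|\dot\phi_\pm(\zeta)|\les\hbar^{-2/3}\la\tau\ra^{1/4}$ on $\zeta\ge0$. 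I would look for $\psi_1=\phi_++r$ with the outgoing normalization $r(\zeta)\to0$ as $\zeta\to+\infty$; then \eqref{eq:Airy} is equivalent to
\[
r(\zeta)=\int_\zeta^\infty\mathcal{G}(\zeta,\zeta')\,\hbar^2\tilde V(\zeta')\big(\phi_+(\zeta')+r(\zeta')\big)\,d\zeta',\qquad
\mathcal{G}(\zeta,\zeta'):=\frac{i\pi}{2\hbar^{4/3}}\big[\phi_+(\zeta)\phi_-(\zeta')-\phi_-(\zeta)\phi_+(\zeta')\big],
\]
the integration running from $\zeta$ to $\infty$ so that the kernel stays bounded and the normalization is respected; convergence at $\zeta'=+\infty$ uses $\tilde V=O(\la\zeta'\ra^{-2})$, read off from Lemma~\ref{lem:langer}.

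The second step is a contraction estimate in the weighted norm $\|w\|:=\sup_{\zeta\ge0}\la\hbar^{-2/3}\zeta\ra^{1/4}\la\zeta\ra^{3/2}|w(\zeta)|$, whose two factors encode the Airy amplitude and the spatial decay one is after. Using $|\mathcal{G}(\zeta,\zeta')|\les\hbar^{-4/3}\la\hbar^{-2/3}\zeta\ra^{-1/4}\la\hbar^{-2/3}\zeta'\ra^{-1/4}$ and $|\tilde V(\zeta')|\les\la\zeta'\ra^{-2}$, the substitution $u=\hbar^{-2/3}\zeta'$ in the resulting one-dimensional integrals shows that $r\mapsto\int_\zeta^\infty\mathcal{G}\,\hbar^2\tilde V\,r$ has operator norm $O(\hbar)$ on this space and that the inhomogeneous term $\int_\zeta^\infty\mathcal{G}\,\hbar^2\tilde V\,\phi_+$ has norm $O(\hbar)$; the gain of $\hbar^{1/3}$ beyond the bookkeeping power $\hbar^2\cdot\hbar^{-4/3}$ comes precisely from the amplitude $\la\tau'\ra^{-1/2}\sim\hbar^{1/3}\la\zeta'\ra^{-1/2}$ of a product of two Airy solutions on the oscillatory side, and it is the same factor that turns the $\la\zeta'\ra^{-2}$ decay of $\tilde V$ into the extra half-power $\la\zeta\ra^{-3/2}$ after integrating $\int_\zeta^\infty$. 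For $\hbar<\hbar_0$ the Neumann series therefore converges with $\|r\|\les\hbar$, i.e.\ $|\psi_1-\phi_+|\les\hbar\,\la\hbar^{-2/3}\zeta\ra^{-1/4}\la\zeta\ra^{-3/2}$; dividing by $|\phi_+|\gtrsim\la\hbar^{-2/3}\zeta\ra^{-1/4}$ and writing the quotient as $\hbar b_1$ gives $|b_1|\les\la\zeta\ra^{-3/2}$, the $k=0$ case of \eqref{eq:bj_est}. That $\psi_1,\psi_2$ form a basis follows since their Wronskian is $W[\phi_+,\phi_-](1+O(\hbar))\ne0$.

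For the $\zeta$-derivative I would not differentiate the kernel, but use the identity $\hbar\,\dot b_1=\phi_+^{-2}\,W[\phi_+,\psi_1]$ together with $W[\phi_+,\psi_1](\zeta)=\int_\zeta^\infty\phi_+(\zeta')\,\tilde V(\zeta')\,\psi_1(\zeta')\,d\zeta'$, the latter following from the integral equation and the constancy of $W[\phi_+,\phi_-]$. Here $\phi_+(\zeta')\psi_1(\zeta')\approx\phi_+(\zeta')^2$ oscillates, its phase having $\zeta'$-derivative $\sim\hbar^{-1}\la\zeta'\ra^{1/2}$, so one integration by parts in $\zeta'$ against the smooth decaying amplitude $\la\tau'\ra^{-1/2}\tilde V(\zeta')$ gains a factor $\hbar\la\zeta'\ra^{-1/2}$ and, after dividing by $|\phi_+|^{2}\sim\la\hbar^{-2/3}\zeta\ra^{-1/2}$, produces exactly $|\dot b_1|\les\hbar^{-1/3}\la\hbar^{-2/3}\zeta\ra^{-1/2}\la\zeta\ra^{-2}$. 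Finally the $E$-derivative bounds come from differentiating the integral equation, and the identity for $\dot b_1$, in $E$: since $\mathcal{G}$ and $\phi_\pm$ do not depend on $E$, each $\partial_E$ falls only on $\tilde V$, costing a factor $E^{-1}$ via $|\partial_E^m\tilde V|\les E^{-m}\la\zeta\ra^{-2}$; induction on $k$ then yields $\|\partial_E^k r\|\les E^{-k}\hbar$ and the full bounds \eqref{eq:bj_est}.

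I expect the main obstacle to be the uniformity as $E\to0$, concretely the bound $|\partial_E^m\tilde V(\zeta,E;\hbar)|\les E^{-m}\la\zeta\ra^{-2}$ uniform in $0<\hbar<\hbar_0$. As $E\to0$ the turning point $x_1(E;\hbar)$ runs off to infinity and the Langer change of variables degenerates; it is precisely the $\tfrac{\hbar^2}{4}\la x\ra^{-2}$-correction built into $V_0$ in Lemma~\ref{lem:langer} (equivalently the $\tfrac14 q^{-1}\la x\ra^{-2}$ term in $\tilde V$) that keeps these $E$-derivatives from acquiring additional negative powers of $\hbar$ --- the same mechanism flagged in the remark after Theorem~\ref{thm:main}. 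This point, and the extraction of the sharp spatial decay via the oscillatory cancellations, are where the real work lies; the perturbative skeleton is parallel to that of Proposition~\ref{prop:AiryI}.
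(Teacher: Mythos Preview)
The paper does not prove Proposition~\ref{prop:AiryII}; it quotes the statement from~\cite{CSST}. Your proposal is essentially the argument carried out there (which in turn follows Olver's framework), so there is nothing to contrast on the level of overall strategy. A few comments on correctness and presentation:

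Your Volterra setup is equivalent to writing $\psi_1=\phi_+(1+\hbar b_1)$ from the outset: substituting this ansatz into~\eqref{eq:Airy} yields $(\phi_+^2\dot b_1)\,\dot{}=-\hbar^{-1}\tilde V\phi_+^2(1+\hbar b_1)$, whose first integral is exactly your Wronskian identity $\hbar\dot b_1=\phi_+^{-2}\int_\zeta^\infty\tilde V\phi_+\psi_1\,d\zeta'$, and whose second integral is the Volterra equation for $b_1$. In~\cite{CSST} the iteration is run in this form; your detour through $r=\psi_1-\phi_+$ is harmless but slightly less direct.

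The integration-by-parts step for $\partial_\zeta b_1$ is right, but two points are glossed over. First, after one IBP the differentiated integrand contains the factor $\hbar\dot b_1$ itself; you need a preliminary crude bound (the naive estimate $|\dot b_1|\les\hbar^{-1}\la\zeta\ra^{-1}$ obtained without IBP suffices) before the improved bound closes. Second, the phase $\Phi'\sim\hbar^{-1}\zeta^{1/2}$ degenerates at $\zeta=0$, so one splits off the non-oscillatory region $0\le\zeta'\les\hbar^{2/3}$ (length $O(\hbar^{2/3})$, integrand $O(1)$) and integrates by parts only on $\zeta'\gtrsim\hbar^{2/3}$. Both are routine but should be said. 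You also need $|\partial_\zeta\tilde V|\les\la\zeta\ra^{-3}$, not just $|\tilde V|\les\la\zeta\ra^{-2}$.

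Your final paragraph is accurate: the entire analytic content of the proposition is the bound $|\partial_E^m\tilde V(\zeta,E;\hbar)|\les E^{-m}\la\zeta\ra^{-2}$ uniformly as $E\to0$ and $\hbar\to0$, which hinges on the $\frac{\hbar^2}{4}\la x\ra^{-2}$ correction in~\eqref{eq:V0_def}. This is what~\cite{CSST} actually proves; you correctly flag it but do not address it, so as written your argument is a reduction to that input rather than a self-contained proof.
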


We remark that the Langer transform is not the only possibility here. In fact, in~\cite{CDST} an alternative approach is used
which reduces the potential to a Bessel normal-form. This is again done by means of a suitable stretching, i.e., a Liouville-Green transform.

\subsubsection{Intermediate energies and the top of the barrier}
Intermediate energies, including the maximum energy of the potential, can be treated by means of an approximation  of the generalized eigenfunctions. This was carried out 
in detail by Costin, Park, and the author by means of a Liouville Green transformation which reduces the potential near the maximum to a purely quadratic normal form, see~\cite[Proposition 2]{CPS}. In this way, one arrives at a perturbed Weber equation instead of the Airy equation as above.  

However, \cite{DSS2} follows a different route: a Mourre estimate followed by a semi-classical version of the propagation bounds in~\cite{HSS}.
Mourre~\cite{Mourre} introduced the powerful idea that the quantum analog, i.e., \[\chi_I(H)i[H,A]\chi_I(H)\ge \theta\chi_I(H)>0\] where $H=-\Delta+V$,  $A=px+xp$, $p=-i\nabla$
and $\chi_I(H)$ localizes $H$ to some compact interval $I$ of positive energies, entails a limiting absorption bound on the resolvent localized to~$I$ (which is some form
of scattering).
Hunziker, Sigal, Soffer~\cite{HSS}, developed a time-dependent and abstract approach to Mourre theory by means of propagation estimates in the spirit of Sigal, Soffer~\cite{SSof1}.   The main result of~\cite{HSS} is the following theorem.

\begin{thm}
\label{thm:HSS}
Let $A,H$ be self adjoint operators on some Hilbert space and assume the Mourre estimate
\begin{equation}\label{eq:1.6}
E_I i[H,A] E_I \ge \theta E_I
\end{equation}
where $\theta>0$ and $I\subset\Rd$ is some compact interval, and $E_I$ is the spectral projector onto $I$ relative to $H$. Assume, furthermore, that all iterated commutators of $f(H)$ with $A$ are bounded where $f\in C^\infty_0(\Rd)$.
Let $\chi^{\pm}$ be the indicator functions of $\Rd^{\pm}$, respectively. Then for any $m\ge1$,
\[
\| \chi^{-} (A-a-\theta' t) e^{iHt} g(H) \chi^+(A-a) \| \le C(m,\theta,\theta') \, t^{-m}
\]
for any $g\in C_0^\infty(I)$, any $0<\theta'<\theta$, uniformly in $a\in\Rd$.
\end{thm}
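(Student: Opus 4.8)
The plan is to prove Theorem~\ref{thm:HSS} by the Sigal--Soffer method of time-dependent propagation observables — a quantitative, time-dependent incarnation of Mourre's positive commutator argument — together with an induction on the decay exponent $m$. First I would reduce to $a=0$: since $A-a$ is again self-adjoint, $i[H,A-a]=i[H,A]$, and the iterated commutators of $f(H)$ with $A-a$ coincide with those with $A$, the pair $(H,A-a)$ satisfies all the hypotheses with the same constants $\theta$; hence a bound established for $a=0$ with a constant depending only on $m,\theta,\theta'$ is automatically uniform in $a\in\Rd$. Next I would replace the sharp cutoffs $\chi^{\pm}$ by smooth monotone versions $F_0,F_1$ (with $F_0\equiv1$ on $[0,\infty)$, $\mathrm{supp}\,F_0\subset[-1,\infty)$, and symmetrically for $F_1$), which changes the operator only by pieces localized near the thresholds and does not affect the claim, and insert a fattened energy cutoff $g_1\in C^\infty_0(I)$ with $g_1\equiv1$ on $\mathrm{supp}\,g$, so that $g(H)=g_1(H)g(H)$ and every spatial/phase-space localization below is flanked by $g_1(H)$.

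The core is the construction, for each $m$, of a bounded self-adjoint propagation observable $\Phi_m(t)=g_1(H)\,\Phi_m\!\left(\tfrac{A-a-\theta' t}{R(t)}\right)g_1(H)$ (possibly carrying an extra $t$-weight), with $\Phi_m$ a smooth monotone function adapted to the phase-space region which the theorem asserts is asymptotically empty and $R(t)\simeq t$, such that the Heisenberg derivative $D\Phi_m(t):=\partial_t\Phi_m-i[H,\Phi_m]$ obeys a differential inequality of the form $D\Phi_m(t)\le -\tfrac{c}{t}\,\Psi_m(t)+\mathcal{E}_m(t)$ with $\Psi_m\ge0$ a ``good'' term and $\mathcal{E}_m(t)$ an error. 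Here the commutator $i[H,\Phi_m(\cdots)]$ is analyzed by the commutator expansion lemma of Sigal and Soffer: commuting the smooth function of $A$ past $g_1(H)$ and expanding produces a leading term proportional to $\tfrac{1}{R(t)}\Phi_m'(\cdots)\,i[H,A]$, to which the Mourre estimate $g_1(H)i[H,A]g_1(H)\ge\theta\,g_1(H)^2$ applies on the support of $\Phi_m'$, plus remainders built from the operators $\mathrm{ad}_A^k(g_1(H))$ — bounded by hypothesis — times extra negative powers of $R(t)\simeq t$. The $\partial_t\Phi_m$ term contributes $-\tfrac{\theta'}{R(t)}\Phi_m'(\cdots)$, which combines with the Mourre term so that the leading coefficient becomes the strictly signed $\tfrac{\theta-\theta'}{t}$: this is precisely where the hypothesis $0<\theta'<\theta$ is used.

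With such an inequality in hand I would run the induction. For $m=0$ the bound is trivial from $\|g(H)\|<\infty$. Assuming the estimate with exponent $-(m-1)$ holds for \emph{every} $g\in C^\infty_0(I)$, I would test $D\Phi_m(t)$ against $\phi_t:=e^{iHt}g(H)F_0(A)\phi$, $\|\phi\|=1$, and integrate $\tfrac{d}{dt}\langle\phi_t,\Phi_m(t)\phi_t\rangle=\langle\phi_t,D\Phi_m(t)\phi_t\rangle$ from a fixed $t_0$ to $t$. The good term drops out by positivity and yields a time-integrated bound on $\Psi_m$; the error term $\langle\phi_t,\mathcal{E}_m(t)\phi_t\rangle$ is estimated using the induction hypothesis applied to its built-in cutoff--propagator--cutoff structure, whose two localizations are widely separated in phase space for $t$ large (after the fattening), so it inherits the rate $t^{-(m-1)}$ with room to spare; the boundary term at $t_0$ is $O(1)$. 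Since $\Phi_m(t)$ is arranged so that $\langle\phi_t,\Phi_m(t)\phi_t\rangle$ dominates, up to lower order, $t^{2m}\,\|F_1(A-a-\theta' t)\phi_t\|^2$ — combined, if needed, with an elementary Lipschitz-in-$t$ bound on $t\mapsto\|F_1(A-a-\theta' t)e^{iHt}g(H)F_0(A)\|$ to pass from the integrated to the pointwise statement — one concludes $\|F_1(A-a-\theta' t)e^{iHt}g(H)F_0(A)\|\lesssim t^{-m}$, and then removes the smoothing.

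I expect the main obstacle to be exactly the bookkeeping that makes this bootstrap close with a genuine \emph{pointwise} $t^{-m}$ rate rather than a merely time-averaged bound: this dictates the correct power of $t$ in the weight of $\Phi_m$ and forces a precise matching between the decay harvested from integrating the good term and the decay demanded of the error term, and it is here that one genuinely exploits the availability of the induction hypothesis for all $g\in C^\infty_0(I)$, since the error terms carry slightly different energy and phase-space cutoffs. A secondary but unavoidable technical point is the domain analysis for the unbounded operator $A$ — justifying the differentiation of $\langle\phi_t,\Phi_m(t)\phi_t\rangle$, the convergence of the commutator expansion, and the boundedness of every operator produced — which is handled by the hypothesis that all iterated commutators of $f(H)$ with $A$ are bounded together with a regularization (e.g.\ replacing $A$ by $A(1+i\eps A)^{-1}$) removed at the end.
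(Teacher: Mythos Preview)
The paper does not prove Theorem~\ref{thm:HSS}; it is quoted as the main result of Hunziker--Sigal--Soffer~\cite{HSS} and used as a black box. Your proposal is a faithful outline of the original \cite{HSS} argument (propagation observables, the Sigal--Soffer commutator expansion lemma, Mourre positivity to extract the sign $\theta-\theta'>0$, and induction on~$m$), so it is consistent with how the result is actually established in the cited reference.
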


As simple consequence of this result is the following propagation estimate, which is clearly most important in the context of Theorem~\ref{thm2}:
\begin{equation}
\label{eq:HSS}
\| \la A\rangle^{-\alpha} e^{iHt} g(H) \la A\rangle^{-\alpha} \| \le C(\alpha) \, \la t\rangle^{-\alpha}
\end{equation}
for any $\alpha>0$.  In application one typically takes $A=\frac12(px+xp)$, the generator of dilations, or some variant thereof. Taking $\alpha=1$ shows
that one needs at least $w_1$ in the Schr\"odinger case of Theorem~\ref{thm2}.

One needs to resolve two issues before applying this theory to Theorem~\ref{thm2}:
\begin{itemize}
\item We require a semi-classical version of~\cite{HSS}.
\item The top of the barrier energy is trapping in the classical sense.
\end{itemize}
While the first issue is a routine variant of~\cite{HSS}, the second is not. In the nontrapping case, Graf~\cite{Graf} and Hislop, Nakamura~\cite{HN} showed that
the classical nontrapping condition $\{a,h\}>\alpha>0$ on the entire energy level $\{h=E_0>0\}$ implies the Mourre estimate~\eqref{eq:1.6} for $I$ some small
interval around~$E_0$ (in the semi-classical case with $\hbar$ sufficiently small).
In the case of surfaces of revolution as in Theorem~\ref{thm2} this fact, together with Theorem~\ref{thm:HSS}, implies that one can handle energies in the range $\epsilon<E<
V_{\rm max}(0)-\epsilon$ since they verify a classical nontrapping condition. On the other hand, for energies near $V_{\rm max}(0)$ this fails since the top energy is
classically trapping. Nevertheless, the Heisenberg uncertainty principle (or the semiclassical harmonic oscillator) guarantee~\eqref{eq:1.6}.

 Indeed, with $V(x)=1- \frac12\la Qx,x\ran + O(|x|^3)$ with $Q$ positive definite,
 \[
 \{ h,a\}=\xi^2 - x\cdot\nabla V = \xi^2 + \la Q x,x\ran + O(|x|^3) \ge \theta(\xi^2 + x^2)
  \]
for small $x$. However, $p^2+q^2\ge c>0$ by the uncertainty principle, which indicates that one should expect that~\eqref{eq:1.6}
continues to hold at a non-degenerate maximum. For a rigorous rendition of this argument see Briet, Combes, Duclos~\cite{BCD}, Nakamura~\cite{Nak}, and~\cite{DSS2}.

Generally speaking, the problem of obtaining a  representation of the resolvent and the spectral measure, and of proving a limiting absorption principle
for energies near a potential barrier has received a lot of attention, see the monograph by Bony et al.~\cite{Bony} and the earlier literature cited there such 
as the classical  work by Helffer, Sj\"ostrand from the 80s.

This concludes our informal sketch of the proof of~\eqref{eq:schr_d2}. As for~\eqref{eq:schr_d3}, one proceeds analogously by dividing energies into three
regions, low, intermediate, and high. In the low and high cases, one obtains pointwise bounds without weights from the WKB arguments outlined above, followed
by oscillatory integral estimates as in~\cite{SSS2}. For the intermediate regime one uses the $L^2$ bound (from the Mourre-Hunziker-Sigal-Soffer estimates)
which requires a weight~$w_1$ followed by the Sobolev embedding theorem. Note that the latter costs one power of~$\ell$, whereas summation over~$\ell$ requires
another weight of the form $\ell^{1+\epsilon}$ which explains the loss of $(1-\del^2_\theta)^{1+\epsilon}$ on the right-hand side of~\eqref{eq:schr_d3} as compared
to~\eqref{eq:schr_d2}.

As a final remark, we would like to emphasize that the sketch of proof of Theorem~\ref{thm2} which we just concluded is an adaptation of the
argument which was developed for the Schwarzschild case in~\cite{DSS2}.

\subsection{The Schwarzschild case}

The results on surfaces of revolution are relevant to another problem namely the decay of linear waves
on a Schwarzschild black hole background. To be more specific,
 choose coordinates such that the
exterior region of
the black hole can be written as $(t,r,(\theta,\phi)) \in \mathbb{R} \times
( 2M, \infty) \times S^2$ with the metric
$$
g = -F(r)dt^2 + F(r)^{-1} dr^2 + r^2(d\theta ^2 +  \sin^2\theta d\phi^2)
$$
where $ F(r) = 1 - \frac{2M}{r}$ and, as usual, $M>0$ denotes the mass.
We now introduce the well--known \emph{Regge--Wheeler
tortoise coordinate} $r_*$ which (up to an additive constant)
is defined by the relation
$$
F = \frac{dr}{dr_*}.
$$
In this new coordinate system, the outer region is described by
$(t,r_*,(\theta,\phi)) \in \mathbb{R} \times \mathbb{R} \times S^2$,
\begin{equation}
\label{eq_sstortoise}
g = -F(r)dt^2 +F(r)dr_*^2 +r^2(d\theta^2 + \sin^2 \theta d\phi ^2)
\end{equation}
with $F$ as above and $r$ is now interpreted as a function of $r_*$.
Explicitly,  $r_*$ is computed as
\begin{equation*}
r_* = r + 2M\log \left(\frac{r}{2M}-1\right).
\end{equation*}
Generally, the Laplace--Beltrami operator on a manifold with metric $g$
is given by
$$ \Box_g=\frac{1}{\sqrt{|\det (g_{\mu \nu})|}}\partial_\mu
\left (\sqrt{|\det (g_{\mu \nu})|}g^{\mu \nu}\partial_\nu \right ) $$
and thus, for the metric $g$ in (\ref{eq_sstortoise}), we obtain
$$ \Box_g=F^{-1}\left (-\partial_t^2+\frac{1}{r^2}\partial_{r_*} \left (r^2
\partial_{r_*} \right ) \right )+\frac{1}{r^2}\Delta_{S^2}. $$
By setting $\psi(t,r_*,\theta, \phi)=r(r_*)\tilde{\psi}(t,r_*,\theta,\phi)$
and writing $x=r_*$,
the wave equation $\Box_g
\tilde{\psi}=0$ is equivalent to
\begin{equation}
\label{eq_wavess}
-\partial_t^2 \psi+\partial_x^2
\psi-\frac{F}{r}\frac{dF}{dr}\psi+\frac{F}{r^2}\Delta_{S^2}\psi=0.
\end{equation}

The mathematically rigorous analysis of this equation goes back to
Wald~\cite{Wal} and Kay~\cite{KW}, who established uniform boundedness of solutions.
In the spirit of the positive commutator methods outlined above, Dafermos and Rodniansk~\cite{DR3} found a robust approach based on carefully chosen vector fields and multipliers. See Luk's work~\cite{Luk1, Luk2} which is in a similar spirit.  As already noted,  Blue and Soffer~\cite{BSof}  proved  local decay estimates using Morawetz estimates.  Dafermos and Rodnianski~\cite{DR2} proved Price's $t^{-3}$ decay law for a nonlinear problem  but assuming spherical symmetry.

The purpose of this section is to discuss recent work of Donninger and the authors on pointwise decay for
solutions to~Eq.~(\ref{eq_wavess}).  Different types of decay
estimates have been proved before.
Our results differ from the above in certain respects: the methods
we use are based on constructing the Green's function and deriving
the needed estimates on it. Previous works in this direction include mainly the series of
papers by Finster, Kamran, Smoller and Yau, see for example~\cite{FKSY},  where the first pointwise decay result
for Kerr black holes was proved.

As in the case for surfaces of revolution,  we freeze the
angular momentum $\ell$ or, in other words, we project onto a
spherical harmonic.
More precisely, let $Y_{\ell,m}$ be a spherical harmonic (that is, an eigenfunction of
the Laplacian on $S^2$ with eigenvalue $-\ell(\ell+1)$) and insert the
ansatz $\psi(t,x,\theta,\phi)
=\psi_{\ell,m}(t,x)Y_{\ell,m}(\theta,\phi)$ in Eq.~(\ref{eq_wavess}).
This yields the \emph{Regge--Wheeler equation}
$$
\partial_t^2 \psi_{\ell,m}-\partial_x^2
\psi_{\ell,m}+V_{\ell,\sigma}(x)\psi_{\ell,m}=0
$$
with $\sigma=1$
where
$$ V_{\ell,\sigma}(x)=\left (1-\frac{2M}{r(x)} \right )
\left (\frac{\ell(\ell+1)}{r^2(x)}+\frac{2M\sigma}{r^3(x)} \right ) $$
is known as the \emph{Regge--Wheeler potential}.
The other physically relevant values of the parameter~$\sigma$ are $\sigma=-3,0$.
For more background we refer the reader to the introduction of~\cite{DSS1}, or~\cite{DR2}.

We immediately note some crucial features of~$V_{\ell,\sigma}$: it decays exponentially as $x\to-\infty$,
it decays according to an inverse square law as $x\to+\infty$ provided $\ell>0$, and like an inverse cube if $\ell=0$.
Moreover, it has a unique nondegenerate maximum which is located at the  {\em photon sphere}. It consists of closed
light rays and replaces the unique periodic geodesic which we encountered in Theorem~\ref{thm2}.

So we expect that at least some of the machinery that we described
above in the surface case applies here as well. However, the
Regge-Wheeller potential is considerably more difficult to deal
with.

The main result of~\cite{DSS1} is the following pointwise decay, which captures the so-called Price law for fixed
angular momentum.  Strictly speaking, it is still off by one power of~$t$ from the sharpest
form of Price's law which is $t^{-2\ell-3}$ whereas the following result proves $t^{-2\ell-2}$
(we shall comment on that issue below). Note how the accelerated decay for higher values of~$\ell$ mirrors
what we saw for the surfaces of revolution in Theorem~\ref{thm1}. Hintz~\cite{Hintz} recently closed the gap of the missing power of~$t$ and thus finished the proof of Price's law. 

\begin{thm}
\label{thm:DSS1}
Let $(\ell,\sigma) \notin \{(0,0), (0,-3), (1,-3)\}$, $\alpha \in \mathbb{N}$
and $1 \leq \alpha \leq 2\ell+3$.
Then the solution operators for the Regge--Wheeler equation satisfy the
estimates
\begin{align*}
 \|w_\alpha \cos(t\sqrt{\mc{H}_{\ell,\sigma}})f\|_{L^\infty(\mathbb{R})}
&\leq C_{\ell,\alpha} \langle t \rangle^{-\alpha}\Big(\Big \|\frac{f'}{w_\alpha}\Big
\|_{L^1(\mathbb{R})}\\
&\qquad\qquad + \Big\|\frac{f}{w_\alpha}\Big
\|_{L^1(\mathbb{R})} \Big) 
\end{align*}
and
$$ \left \|w_\alpha
\frac{\sin(t\sqrt{\mc{H}_{\ell,\sigma}})}{\sqrt{\mc{H}_{\ell,\sigma}}}f \right
\|_{L^\infty(\mathbb{R})}
\leq C_{\ell,\alpha} \langle t \rangle^{-\alpha+1}\left \|\frac{f}{w_\alpha}\right
\|_{L^1(\mathbb{R})} $$
for all $t \geq 0$ where $w_\alpha(x):=\langle x \rangle^{-\alpha}$.
\end{thm}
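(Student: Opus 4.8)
The plan is to follow the route of Theorem~\ref{thm1} and reduce everything, via the spectral theorem, to pointwise bounds on oscillatory integrals in the energy variable built from the Jost solutions of $\calH:=\calH_{\ell,\sigma}=-\partial_x^2+V_{\ell,\sigma}$. Writing the spectral resolution of $\calH$ in the form~\eqref{eq:semievol}, one has
\[
\cos(t\sqrt{\calH})(x,x')=\frac{2}{\pi}\int_0^\infty \cos(t\lambda)\,\Im\Big[\frac{f_+(x,\lambda)f_-(x',\lambda)}{W(\lambda)}\Big]\lambda\,d\lambda,
\]
with $f_\pm$ the outgoing Jost solutions ($f_\pm\sim e^{\pm i\lambda x}$ as $x\to\pm\infty$) and $W$ their Wronskian, and analogously for $\frac{\sin(t\sqrt{\calH})}{\sqrt{\calH}}$ with an extra factor $\lambda^{-1}$; the exponential decay of $V_{\ell,\sigma}$ as $x\to-\infty$ together with its inverse-square decay $V_{\ell,\sigma}(x)=\ell(\ell+1)x^{-2}+O(x^{-3})$ as $x\to+\infty$ (for $\ell\ge1$; inverse cube when $\ell=0$) guarantees that these solutions exist, are continuous in $\lambda$ down to $\lambda=0$, and depend on $\lambda$ in a symbol-like fashion after the usual Volterra iteration. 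The stated $L^1\to L^\infty$ bounds with weights $w_\alpha=\langle x\rangle^{-\alpha}$ are then equivalent to pointwise kernel estimates $\langle x\rangle^{-\alpha}\langle x'\rangle^{-\alpha}\,|K(t,x,x')|\les\langle t\rangle^{-\alpha}$ for $\cos$ (resp.\ $\langle t\rangle^{-\alpha+1}$ for $\sin/\sqrt{\calH}$); for the cosine kernel one first peels off the ``free'' (distributional) part of the kernel, which is controlled by the $\|f'\|_1$ term via a primitive representation, and integrates by parts once against $f$. Thus the problem becomes a family of uniform-in-$(x,x')$ estimates on one-dimensional oscillatory integrals with the explicit amplitude $\Im[f_+(x,\lambda)f_-(x',\lambda)/W(\lambda)]$.

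The analysis proceeds by splitting the energy into a low band $0<\lambda<\eps$, an intermediate band $\eps<\lambda<\Lambda$ containing the barrier energy of the photon sphere, and a high band $\lambda>\Lambda$. The intermediate band is the easiest for \emph{fixed} $\ell$: the resolvent boundary values $R((\lambda\pm i0)^2)$ are smooth in $\lambda$ there (the quasinormal modes of the Regge--Wheeler barrier sit strictly in the lower half-plane and for fixed $\ell$ contribute only exponentially decaying terms to $K(t,x,x')$, exactly as in the contour-deformation picture of Figure~1), so repeated integration by parts in $\lambda$ produces $O(t^{-N})$ for every $N$; no semi-classical uniformity in $\ell$ is needed, which is what distinguishes this fixed-$\ell$ statement from Theorem~\ref{thm2}. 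In the high band the potential is negligible against $\lambda^2$ and a Liouville--Green/WKB comparison reduces matters to the free one-dimensional kernels, which, combined with the weights and the $W^{1,1}$ data norm, already obey the claimed form. The accelerated, $\ell$-dependent decay comes entirely from the low band, and this is where the \emph{critical} $\langle x\rangle^{-2}$ decay of $V_{\ell,\sigma}$ at spatial infinity, with the precise centrifugal coefficient $\ell(\ell+1)=(\ell+\tfrac12)^2-\tfrac14$, enters.

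The key step is therefore the asymptotic description of $f_+(x,\lambda)$ as $\lambda\to0+$, uniformly for $x\ge1$. Setting $\nu:=\ell+\tfrac12$, the zero-energy solutions of $\calH f=0$ behave like $x^{\frac12\pm\nu}$ as $x\to+\infty$; one perturbs these in $\lambda$ on $1\le x\ll\lambda^{-1}$, and on $[\lambda^{-1+\eps},\infty)$ compares the outgoing Jost solution of $\calH$ to that of the inverse-square model $-\partial_x^2+(\nu^2-\tfrac14)x^{-2}$, whose outgoing Jost solution is the explicit Hankel expression $\sqrt{\pi x\lambda/2}\,e^{i(2\nu+1)\pi/4}H_\nu^{(+)}(x\lambda)$ (a Liouville--Green/Langer reduction in the spirit of Lemma~\ref{lem:langer}; in~\cite{DSS1} a Bessel normal form is used directly). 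Matching the two representations at $x\sim\lambda^{-1+\eps}$ and computing the relevant Wronskians, as in Proposition~\ref{prop:wronski}, one obtains $f_+(x,\lambda)=a_+(\lambda)u_0^+(x,\lambda)+b_+(\lambda)u_1^+(x,\lambda)$ with $a_+\sim c\lambda^{\frac12+\nu}$, $b_+\sim c'\lambda^{\frac12-\nu}$ and symbol-type $\lambda$-derivative bounds; the solution $f_-$ from the horizon side is easier, being analytic in $\lambda$ near $0$. Feeding these into the Wronskian and into $\Im[f_+f_-/W]$, and using that zero energy is regular for all admissible $(\ell,\sigma)$ — which must be checked: it fails exactly for the excluded triples $(0,0),(0,-3),(1,-3)$, while for the others the relevant Wronskian of subordinate solutions at $\pm\infty$ is nonzero, e.g.\ because $V_{0,1}>0$ or because the two asymptotic fundamental systems are independent — one reads off the behavior of the spectral density near $\lambda=0$, which carries extra vanishing powers tied to $\nu=\ell+\tfrac12$. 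A Watson-type argument (Lemma~\ref{lem:watson}), applied after smoothly cutting off the energy, then converts this flatness into the $t^{-\alpha}$ decay for $\cos$ and $t^{-\alpha+1}$ for $\sin/\sqrt{\calH}$, with $1\le\alpha\le 2\ell+3$, the weights $w_\alpha$ being precisely what is needed to absorb the $x$-growth of $u_0^+$ while retaining the full rate.

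I expect the main obstacle to be exactly this low-energy Jost analysis together with the weighted bookkeeping: the Regge--Wheeler potential, unlike the surfaces of revolution in Theorem~\ref{thm1}, is asymmetric (exponential at one end, critical inverse square at the other) and only asymptotically inverse square — with $O(x^{-3}\log x)$ corrections coming from the tortoise change of variables — so one must control the generalized eigenfunctions and the imaginary part of the resolvent kernel simultaneously for all small $\lambda>0$ and all $x$, tracking the symbol-type error terms through the matching region $x\sim\lambda^{-1+\eps}$, and then prove that the oscillatory integrals obey the sharp weighted bounds uniformly in $(x,x')$; a crude estimate that does not exploit the precise size of $\Im[f_+f_-/W]$ at $\lambda\to0$ misses the $\ell$-acceleration entirely. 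The verification of zero-energy regularity for the admissible $(\ell,\sigma)$ and the separate treatment of the exceptional case $\ell=0,\sigma=1$ (inverse-cube tail, hence a genuine one-dimensional short-range problem with the attendant logarithmic features as in~\cite{Sch1}) are lower-risk but not automatic. The summation over $\ell$, the barrier effects that matter only there, and the remaining power gap to the sharp Price rate $t^{-2\ell-3}$ — later closed by Hintz~\cite{Hintz} — all lie outside this fixed-$\ell$ statement and are addressed by the methods of Theorem~\ref{thm2} and~\cite{DSS2}.
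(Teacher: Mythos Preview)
Your proposal is correct and follows essentially the same route as the paper: spectral representation of the propagators as oscillatory integrals in~$\lambda$ with amplitude $\Im[f_+(x,\lambda)f_-(x',\lambda)/W(\lambda)]$, construction of the Jost solutions by perturbing around $\lambda=0$ for $|x\lambda|$ small and off the Hankel model for $|x\lambda|$ large, matching in an overlap region, and extraction of the decay from the precise $\lambda\to0$ asymptotics of the Green's function (the paper records $\Im[G_{\ell,\sigma}(0,0,\lambda)]=\lambda P_\ell(\lambda^2)+O(\lambda^{2\ell+1})$ with symbol-type errors). You have also correctly isolated the genuine difficulties---the asymmetric potential, the need for derivative bounds on all perturbative pieces, and the verification of zero-energy regularity outside the excluded triples---and rightly noted that the intermediate and high energy bands pose no obstacle for fixed~$\ell$.
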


The values of $(\sigma,\ell)$ which we exclude here are precisely those where the Regge-Wheeler potential
gives rise to zero energy resonances. Physically speaking, they correspond to a gauge invariance, such as changing the mass, and are therefore
irrelevant.

The proof of Theorem~\ref{thm:DSS1} is based on
representing the solution as an oscillatory
integral in the energy variable $\lambda$, schematically one may write
$$
\psi(t,x) = \int U(t,\lambda) \mathrm{Im} \left [
G_{\ell,\sigma}(x,x',\lambda) \right ] f(x')\,
dx'd\lambda
$$
where  $U(t,\lambda)$ is a combination of $\cos (t\lambda) $ and
$\sin (t\lambda)$ terms and $G_{\ell,\sigma}(x,x',\lambda)$
is the kernel (Green's function) of the
resolvent of the  operator $\mc{H}_{\ell,\sigma}$.  In analogy with Theorem~\ref{thm1},
$G_{\ell,\sigma}(x,x',\lambda)$ is constructed in terms of the
Jost solutions and we obtain these functions in various domains of
the $(x,\lambda) $ plane by perturbative arguments: for $|x \lambda|$ small
we perturb in $\lambda$ around $\lambda=0$, whereas for $|x \lambda|$ large we
perturb off of Hankel functions.
This is done in such a way that there remains a small window where the two
different
perturbative solutions can be glued together.
One of the main technical difficulties of the proof lies with the fact that we
need good estimates for arbitrary derivatives of the perturbative solutions.
This is necessary in order to control the oscillatory integrals.
The most important contributions come from $\lambda \sim 0$ and we therefore
need to derive the
exact asymptotics of the Green's function and its derivatives in the limit
$\lambda \to 0$.
For instance, we prove that
$$ \mathrm{Im}\left [G_{\ell,\sigma}(0,0,\lambda) \right
]=\lambda P_\ell(\lambda^2)+O(\lambda^{2\ell+1}) $$
as $\lambda \to 0+$ where $P_\ell$ is a polynomial of degree $\ell-1$ (we set
$P_0 \equiv 0$) and the
$O$--term satisfies $O^{(k)}(\lambda^{2\ell+1})=O(\lambda^{2\ell+1-k})$ for all
$k \in \mathbb{N}_0$.

As already noted before, for $\ell=0$ the Regge-Wheeler potential decays like an inverse cube as $x\to\infty$. This case
is covered by the following result of
Donninger and the first author~\cite{DS}.

\begin{thm}
\label{thm:DS}
Let $V \in C^{[\alpha]+1}(\mathbb{R})$ with
$V(x)=|x|^{-\alpha}[c_\pm+O(|x|^{-\beta})]$ as $x \to \pm \infty$ where
$2<\alpha \leq 4$, $\beta=\frac{1}{2}(\alpha-2)^2$,
$c_\pm \in \mathbb{R}$ and $|O^{(k)}(|x|^{-\beta})|\lesssim |x|^{-\beta-k}$ for
$k=1,2,\dots,[\alpha]+1$. Denote by $A$ the self--adjoint Schr\"odinger operator
$Af:=-f''+Vf$ in $L^2(\mathbb{R})$
and assume that $A$ has no bound states and no resonance at zero
energy.
Then the following decay bounds hold:
\begin{align*} 
\|\langle \cdot \rangle^{-\alpha-1}
\cos(t\sqrt{A})f\|_{L^\infty(\mathbb{R})}
& \lesssim \langle t \rangle^{-\alpha} \big (\|\langle \cdot
\rangle^{\alpha+1}f'\|_{L^1(\mathbb{R})} \\
&\qquad +
\|\langle \cdot
\rangle^{\alpha+1}f\|_{L^1(\mathbb{R})}\big ) 
\end{align*}
and
$$ \left \|\langle \cdot \rangle^{-\alpha-1} \frac{\sin(t\sqrt{A})}{\sqrt{A}}
f \right \|_{L^\infty(\mathbb{R})}
\lesssim \langle t \rangle^{-\alpha} \|\langle \cdot
\rangle^{\alpha+1}f\|_{L^1(\mathbb{R})} $$
for all $t \geq 0$.
\end{thm}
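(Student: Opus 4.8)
\section*{Proof proposal for Theorem~\ref{thm:DS}}

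The plan is to run the Green's function scheme used for Theorems~\ref{thm1} and~\ref{thm:DSS1}, specialized to the half-line energy integral governing the wave propagators. First I would pass, via the spectral theorem, from $\cos(t\sqrt A)$ and $\frac{\sin(t\sqrt A)}{\sqrt A}$ to oscillatory integrals in the energy $\lambda$: writing $G(x,x',\lambda)$ for the kernel of $(A-\lambda^2-i0)^{-1}$, one has
\[
\cos(t\sqrt A)(x,x') = \frac{2}{\pi}\int_0^\infty \cos(t\lambda)\,\Im G(x,x',\lambda)\,\lambda\,d\lambda,
\]
and the same with $\frac{\sin(t\lambda)}{\lambda}$ in place of $\cos(t\lambda)$. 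Under the hypotheses (no bound states, no zero energy resonance, $V\to0$) the operator $A$ has purely absolutely continuous spectrum $[0,\infty)$, so no continuous spectral projection is needed. The problem then splits into (i) constructing $G(x,x',\lambda)$ and controlling $\Im G$ together with all of its $\lambda$-derivatives, uniformly for $x,x'\in\Rd$ and $\lambda\in(0,\infty)$, with the right weights in $x,x'$; and (ii) bounding the resulting oscillatory integrals by repeated integration by parts and van der Corput/stationary phase estimates.

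For step (i), $G(x,x',\lambda) = W(\lambda)^{-1}f_+(x,\lambda)f_-(x',\lambda)$ for $x\ge x'$, where $f_\pm$ are the outgoing Jost solutions ($Af_\pm=\lambda^2 f_\pm$, $f_\pm\sim e^{\pm i\lambda x}$ as $x\to\pm\infty$) and $W(\lambda)=W(f_+,f_-)$ is their Wronskian. Since $V\in L^1(\Rd)$ (because $\alpha>1$), the Jost solutions exist globally by Volterra iteration; for $|x\lambda|\gtrsim1$ one perturbs directly off $e^{\pm i\lambda x}$, while for $|x\lambda|\lesssim1$ one perturbs in $\lambda$ about the zero energy solutions of $Af=0$, which in the non-resonant case are asymptotic at each end to $x$ and to a nonzero constant. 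The two representations are glued across the window $|x|\sim\lambda^{-1+\eps}$ as in the matching method of Section~\ref{sec:suminell}. The non-resonance assumption is precisely what makes $W(\lambda)$ nonvanishing and of symbol type near $\lambda=0$ with $W(0)\ne0$, so that $G$ and $\Im G$ inherit good bounds. The crucial output is an expansion, as $\lambda\to0$,
\[
\Im G(x,x',\lambda) = \lambda\,g_{\mathrm{reg}}(x,x',\lambda) + R(x,x',\lambda),
\]
where $g_{\mathrm{reg}}$ is a polynomial in $\lambda^2$ (hence even and smooth in $\lambda$) with coefficients of at most polynomial growth in $\langle x\rangle,\langle x'\rangle$, and $R$ obeys symbol bounds $|\partial_\lambda^k R|\lesssim \lambda^{\alpha-1-k}\langle x\rangle^{\alpha+1}\langle x'\rangle^{\alpha+1}$. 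The order $\alpha-1$ of the remainder and the number of honest $\lambda$-derivatives one can extract are controlled by the moment conditions $\int|x|^k|V(x)|\,dx<\infty$, valid for $k<\alpha-1$, which is why $2<\alpha\le4$, the auxiliary exponent $\beta=\tfrac12(\alpha-2)^2$, and the weights $\langle\cdot\rangle^{\alpha+1}$ enter the statement.

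For step (ii), insert the expansion into the $\lambda$-integral. The high-energy range $\lambda\gtrsim1$ is disposed of by repeated integration by parts, using the smoothness of $\Im G$ there, giving $O(t^{-N})$ (for $\cos(t\sqrt A)$ this step costs the one extra $x$-derivative on the data, compensating for the absence of the smoothing factor $\lambda^{-1}$ present in $\frac{\sin(t\lambda)}{\lambda}$). The contribution of $g_{\mathrm{reg}}$ is again negligible: $\lambda^2 g_{\mathrm{reg}}\chi(\lambda)$ is even, smooth, compactly supported, so $\int_0^\infty\cos(t\lambda)\lambda^2 g_{\mathrm{reg}}\chi\,d\lambda=O(t^{-N})$ since all odd $\lambda$-derivatives vanish at $0$; the $\frac{\sin(t\lambda)}{\lambda}$ case is identical after removing one factor of $\lambda$. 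Hence the decay comes entirely from the remainder: $\big|\int_0^\infty\cos(t\lambda)\,\lambda R\,\chi\,d\lambda\big|\lesssim t^{-\alpha}\langle x\rangle^{\alpha+1}\langle x'\rangle^{\alpha+1}$ by the standard oscillatory bound for symbols of order $\alpha-1$, and likewise with $\frac{\sin(t\lambda)}{\lambda}$. Integrating against $f(x')$ and applying the weight $\langle x\rangle^{-\alpha-1}$ yields the claimed bounds with $\|\langle\cdot\rangle^{\alpha+1}f\|_{L^1}$ on the right (and the additional $\|\langle\cdot\rangle^{\alpha+1}f'\|_{L^1}$ for $\cos(t\sqrt A)$).

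The main obstacle is step (i): producing the small-$\lambda$ asymptotics of the Jost solutions, the Wronskian, and $\Im G$ with uniform control of \emph{all} relevant $\lambda$-derivatives down to $\lambda=0$ when $V$ decays only like $|x|^{-\alpha}$ with $\alpha$ possibly close to the critical value $2$. There $xV(x)\sim|x|^{1-\alpha}$ is barely integrable, only finitely many moments of $V$ converge, and the delicate bookkeeping of how many clean powers of $\lambda$ (equivalently powers of $t^{-1}$) can be extracted before one hits the non-smooth remainder $R$ — together with the derivative-controlled matching of the small-$|x\lambda|$ and large-$|x\lambda|$ representations across the intermediate window — is the technical heart of the argument. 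It is exactly this analysis that pins down the admissible range $2<\alpha\le4$, the correction exponent $\beta=\tfrac12(\alpha-2)^2$, and the weight $\langle\cdot\rangle^{\alpha+1}$.
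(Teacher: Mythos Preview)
Your proposal is correct and follows essentially the same route the paper describes for this circle of results: the oscillatory-integral representation via the spectral resolution, construction of $G(x,x',\lambda)$ from the Jost solutions $f_\pm$, the matching of the small-$|x\lambda|$ (perturbation off zero-energy solutions) and large-$|x\lambda|$ (perturbation off $e^{\pm i\lambda x}$) regimes, and the reduction of the decay rate to the leading nonsmooth term in the small-$\lambda$ expansion of $\Im G$. This is exactly the scheme the paper outlines for Theorem~\ref{thm:DSS1} and is the content of~\cite{DS}, to which the paper defers for the actual proof of Theorem~\ref{thm:DS}.

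One minor remark worth making: the paper's in-text illustration of this mechanism (Proposition~\ref{prop:modeldecay}) is phrased via the Laplace transform and contour deformation, extracting the decay from the branching of the resolvent at $p=0$ by Watson's lemma, rather than via integration by parts on the real $\lambda$-axis as you do. The two presentations are equivalent, but the contour picture makes slightly more transparent why the \emph{analytic} part of the expansion (your $\lambda g_{\mathrm{reg}}$) contributes arbitrarily fast decay while only the branch-type remainder of order $\lambda^{\alpha-1}$ survives. Your identification of step~(i) --- derivative-controlled small-$\lambda$ asymptotics of $f_\pm$, $W$, and $\Im G$ when $V$ has only finitely many moments --- as the technical heart is exactly right, and is indeed what fixes the range $2<\alpha\le4$, the exponent $\beta$, and the weight $\langle\cdot\rangle^{\alpha+1}$.
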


In particular, this gives $t^{-3}$ for $\alpha=3$ which is the sharp form of Price's law for $\ell=0$.
It is important to realize that the decay of the waves in Theorems~\ref{thm:DSS1} and~\ref{thm:DS} is really a manifestation of transport
rather than of dispersion.  Indeed, d'Alembert's formula shows that any solution of
\[
\partial_{tt} u - \partial_{xx} u =0,\qquad u(0)=f,\; \partial_t u(0)=g
\]
with Schwartz data (say) and $\int g(x)\,dx=0$ satisfies
\[
\| \la x\rangle^{-\alpha} u(t)\|_\infty \le C(\alpha)\, t^{-\alpha}
\]
for {\em any} $\alpha\ge0$.
This vanishing mean condition can be attributed to the zero energy resonance for the free Laplacian in one dimension.
 Needless to say, the one-dimensional problem does not exhibit any sort of  dispersion but is governed by linear transport
which leads to this arbitrary local decay of the waves.
It is very interesting to note (but perhaps not immediately clear) that the sharp Huyghens principle in three dimensions is still visible in the local
decay law of Theorem~\ref{thm:DSS1}. In fact, we claim that the sharp $t^{-2\ell-3}$ Price law (at least for $\ell\ge1$) is a result of the correction term
of the form~$\frac{\log x}{x^3}$ in the Regge-Wheeler potential rather than the leading inverse square decay as $x\to+\infty$.

To clarify this point, we now present a simple model case from~\cite{CDST}.
With $a>0$,
\[
\calH:=-\del_x^2 + V, \quad V(x)=\left\{ \begin{array}{ll} 0 & \text{\ if\ } x\le -1\\
\frac{a^2-\frac14}{x^2} & \text{\ if\ }x\ge 1 \end{array}\right.
\]
Moreover, $V\in C^\infty(\Rd)$ is such that $\calH$ has {\em no zero energy resonance} which means
that there does not exist a globally subordinate (or recessive)
 solution $\calH f=0$ other than $f\equiv 0$.  Recall that this refers to solutions of the slowest
allowed growth at both ends, which means here that $f(x)=O(1)$ as $x\to-\infty$ and $f(x)=O(x^{\frac12-a})$ as $x\to+\infty$. 
Then one has the following local decay estimates
 for the wave equation with potential~$V$.

\begin{prop}\label{prop:modeldecay}
Under the above assumptions on~$\calH$,
\begin{align*}
\Big\| \la x\ra^{-\sigma} \frac{\sin(t\sqrt{\calH})}{\sqrt{\calH}}P_{(0,\infty)}(\calH) g\Big\|_\infty &\le C\la t\ra^{-2a-1} \big\| \la x\ra^\sigma g \big\|_1 \\
\Big\| \la x\ra^{-\sigma}  \cos(t\sqrt{\calH}) P_{(0,\infty)}(\calH) f\Big\|_\infty &\le C\la t\ra^{-2a-2} ( \big\| \la x\ra^\sigma f \big\|_1 \\
&\qquad +  \big\| \la x\ra^\sigma f' \big\|_1)
\end{align*}
where $\sigma>0$ is sufficiently large depending on~$a$. These decay rates are optimal provided $a\not\in \Z_0^++\frac12$. In the latter case, one obtains
decay $t^{-N}$ for any $N$ (provided $\sigma$ is taken sufficiently large depending on~$N$).
\end{prop}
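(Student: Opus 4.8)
\emph{Proof proposal.} The plan is to run the stationary-phase/Green's-function scheme of~\cite{DSS1,CDST}, exploiting the fact that for this model potential the Jost solutions are available in closed form outside the bounded window $|x|\le 1$, so that the general Liouville--Green analysis (Lemma~\ref{lem:langer}, Propositions~\ref{prop:AiryI}--\ref{prop:AiryII}) is not needed. From the spectral theorem, with $x'\le x$,
\[
\frac{\sin(t\sqrt{\calH})}{\sqrt{\calH}}P_{(0,\infty)}(\calH)(x,x') = \frac{2}{\pi}\int_0^\infty \sin(t\lambda)\,\Im\Big[\frac{f_+(x,\lambda)\,f_-(x',\lambda)}{W(\lambda)}\Big]\,d\lambda,
\]
and $\cos(t\sqrt{\calH})P_{(0,\infty)}(\calH)(x,x')$ equals the same integral with $\sin(t\lambda)$ replaced by $\lambda\cos(t\lambda)$; here $f_\pm(\cdot,\lambda)$ are the outgoing Jost solutions and $W(\lambda)=W(f_+,f_-)$. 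The operator $P_{(0,\infty)}(\calH)$ discards the finite-rank negative bound states, and there is no zero energy eigenvalue since a solution of $\calH f=0$ cannot lie in $L^2$ near $-\infty$. I would first dispose of the high-energy range $\lambda>\lambda_0$: the potential is bounded and borderline short range ($\asymp x^{-2}$ at $+\infty$), so the limiting-absorption resolvent bounds together with all their $\lambda$-derivatives hold there, and integrating by parts in $\lambda$ --- which transfers a derivative onto the datum and produces the $f'$-term in the cosine estimate --- yields decay $t^{-N}$ for every~$N$. It thus remains to analyze $0<\lambda<\lambda_0$.

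For small $\lambda$ one has the exact formulas $f_-(x,\lambda)=e^{-i\lambda x}$ on $x\le-1$ (entire in $\lambda$) and $f_+(x,\lambda)=\sqrt{\pi/2}\,e^{i(2a+1)\pi/4}\sqrt{x\lambda}\,H^{(+)}_a(x\lambda)$ on $x\ge1$, the Hankel solution of $-f''+\tfrac{a^2-1/4}{x^2}f=\lambda^2 f$ normalized to $e^{i\lambda x}$ at $+\infty$; on $[-1,1]$ the potential is smooth and both solutions extend with analytic dependence on $\lambda$. The small-argument-to-oscillatory transition of $\sqrt{x\lambda}\,H^{(+)}_a(x\lambda)$ as $x$ runs over $[1,\infty)$ is governed by the explicit asymptotics of Hankel functions, uniformly in small $\lambda$; in particular $f_-(x,\lambda)\to\phi_-(x)$ as $\lambda\to0$, where $\phi_-$ solves $\calH\phi_-=0$, $\phi_-\equiv1$ for $x\le-1$, and $\phi_-(x)=\alpha_0 x^{1/2+a}+\beta_0 x^{1/2-a}$ for $x\ge1$. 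Expanding $f_-$ in the Bessel basis $\sqrt{x}\,J_{\pm a}(\lambda x)$ and matching, a Wronskian computation gives
\[
W(\lambda)=c\,\alpha_0\,\lambda^{1/2-a}\big(1+O(\lambda^{2a})+O(\lambda^2)\big),\qquad \lambda\to0+,\quad c\neq0,
\]
with symbol-type remainders. Here $\alpha_0\neq0$ is \emph{exactly} the hypothesis that $\calH$ has no zero energy resonance: by the universal characterization recalled above, a resonance is a nonzero solution of $\calH f=0$ subordinate at both ends, i.e.\ bounded at $-\infty$ and $\asymp x^{1/2-a}$ at $+\infty$, which would force $\alpha_0=0$.

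Substituting these expansions into $f_+f_-/W$ and taking the imaginary part, I expect, for $0<\lambda<\lambda_0$,
\[
\Im\Big[\frac{f_+(x,\lambda)f_-(x',\lambda)}{W(\lambda)}\Big]=\lambda\,\phi_1(\lambda^2;x,x')+\lambda^{2a}\,\phi_2(\lambda;x,x'),
\]
where $\phi_1$ is smooth in $\lambda^2$, $\phi_2$ is smooth in $\lambda$ down to $0$, and both satisfy the weighted symbol bounds $\big|\partial_\lambda^k\big[(\la x\ra\la x'\ra)^{-\sigma}\phi_j(\lambda;x,x')\big]\big|\le C_k$ once $\sigma\gtrsim 2a+1$ (the $x^{2a}$-growth of $\phi_2$ comes from the $x^{1/2+a}$ branch of the Jost solution, the $\la x'\ra^k$-growth under differentiation from the factor $e^{-i\lambda x'}$). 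The decisive structural point --- the concrete signature of ``no resonance'' --- is that the smooth part $\lambda\phi_1(\lambda^2;x,x')$ is \emph{odd} in $\lambda$: in the non-resonant case the leading coefficient of $f_+/W$ is real, while its full $\lambda$-expansion pairs an even real part with an odd imaginary part, so that $\Im[(\Re E+i\Im E)e^{-i\lambda x'}]=-\Re E\,\sin(\lambda x')+\Im E\,\cos(\lambda x')$ carries no constant (indeed no even-power) term at $\lambda=0$. This is precisely what rules out the slow rate $t^{-1}$ and promotes the decay to $t^{-2a-1}$. For the exceptional values $a\in\Z_0^{+}+\tfrac12$ the Bessel solutions degenerate to elementary functions, $\sqrt{x\lambda}\,H^{(+)}_a(x\lambda)$ becomes globally smooth in $\lambda$, the $\lambda^{2a}$-branch disappears, and $\Im[f_+f_-/W]$ is a smooth function of $\lambda$ --- which is the source of the $t^{-N}$ alternative.

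The final step is the oscillatory-integral estimate. The smooth piece $\lambda\phi_1(\lambda^2)$ integrated against $\sin(t\lambda)$ (respectively $\lambda^2\phi_1(\lambda^2)$ against $\lambda\cos(t\lambda)$), being an odd, respectively even, smooth compactly supported integrand, reduces to the Fourier transform of a $C_0^\infty$ function and decays faster than any power of $t$. The branch piece $\lambda^{2a}\phi_2(\lambda)$ is the genuine contribution: integrating by parts $\lfloor 2a\rfloor+1$ times and invoking the elementary bound $\big|\int_0^\infty e^{it\lambda}\lambda^\beta\psi(\lambda)\,d\lambda\big|\les t^{-\beta-1}$ for $-1<\beta<0$, $\psi\in C_0^\infty$ --- the $e^{it\lambda}$ analogue of the $e^{it\lambda^2}$ stationary-phase bound used for Theorem~\ref{thm1} --- gives $t^{-2a-1}$ for the sine propagator and, with the extra factor $\lambda$, $t^{-2a-2}$ for the cosine propagator; in the degenerate case $a\in\Z^{+}$ one has instead a $\lambda^{2a}\log\lambda$ branch, which still produces the stated rate. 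Optimality for $a\notin\Z_0^{+}+\tfrac12$ follows since $\phi_2(0;x,x')\neq0$ for generic $x,x'$, so the sine transform of $\lambda^{2a}_{+}$ decays \emph{exactly} at rate $t^{-2a-1}$ and no faster. The main obstacle I anticipate is twofold and lies entirely in the small-$\lambda$ analysis: obtaining symbol-type control of all $\lambda$-derivatives of $f_\pm$, $W$, and hence of $\Im[f_+f_-/W]$, uniformly for $x,x'\in\Rd$ up to fixed polynomial weights and uniformly across the power-to-oscillatory transition of the Hankel function; and, more delicately, isolating the non-smooth part as the single explicit factor $\lambda^{2a}$ (or $\lambda^{2a}\log\lambda$) while proving that the complementary part is odd in $\lambda$. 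The latter is the algebraic heart of the argument --- it is the mechanism by which the hypothesis $\alpha_0\neq0$ turns the non-decaying behavior of the free one-dimensional wave into genuine local decay.
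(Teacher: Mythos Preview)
Your plan is sound and would succeed; it is the real-energy spectral route of~\cite{SSS2,DSS1}, whereas the paper's own proof runs on the Laplace side. Concretely, the paper sets $\hat\psi(p,\cdot)=(\calH+p^2)^{-1}g$ for $\Re p>0$, writes the resolvent kernel $G(p;x,y)$ via the exponentially decaying Jost solutions $f_-(x,p)=e^{px}$ on $x\le-1$ and $f_+(x,p)=\tfrac{\pi i}{2}\,e^{a\pi i/2}\sqrt{2px/\pi}\,H_a^{(1)}(ipx)$ on $x\ge1$, and expands $G(p;x,y)$ near $p=0$ into an analytic piece plus the branch $C(V)\,p^{2a}[1+\ldots]$ (with an additional $\log p$ if $a\in\Z$). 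The inverse-Laplace contour $\Re p=p_0>0$ is moved onto the imaginary axis; large $|p|$ is handled by integration by parts, the analytic part near $p=0$ contributes rapid decay since it is the Fourier transform of a smooth cutoff, and the branched term is evaluated by wrapping a contour around the cut $(-\infty,0]$, yielding $2\sin(2a\pi)\int_0^\infty e^{-tp}p^{2a}\,dp=2\sin(2a\pi)\,\Gamma(2a+1)\,t^{-2a-1}$. The exceptional case $a\in\Z_0^{+}+\tfrac12$ is then transparent: $p^{2a}$ is an odd integer power, hence analytic at $p=0$, and the contour passes straight through. What this buys over your approach is that the step you flag as the ``algebraic heart'' --- proving that the smooth part of the spectral density is $\lambda\phi_1(\lambda^2)$ so that its sine transform decays rapidly --- is replaced by a contour argument in which analyticity in~$p$ does the cancellation automatically, with no parity bookkeeping; conversely your route stays on the real axis and needs no complex analysis. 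One small imprecision: integrating by parts in~$\lambda$ at high energy produces polynomial weights in $x,x'$ (from differentiating $e^{\pm i\lambda x}$), not a derivative of the datum; the $f'$ on the right of the cosine estimate is rather a Sobolev-type requirement forced by the extra factor of~$\lambda$ in that propagator, which leaves the high-energy integrand only conditionally convergent and prevents a uniform $L^1\to L^\infty$ bound near $t=0$ without it.
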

\begin{proof}
We prove the first bound, the second one being very similar. Thus, let $\psi(t,x)$ be a solution of the problem
\[
\del_t^2 \psi-\del_x^2\psi+V\psi=0,\quad \psi(0,x)=0, \; \del_t\psi(0,x)=g
\]
where $g$ is Schwartz, say,  and set for $\Re(p)>0$
\[
\hat \psi(p,x) := \int_0^\infty e^{-tp} \psi(t,x)\, dt
\]
Then
\[
(\calH+p^2)\hat{\psi}(p,\cdot)=g
\]
which has a unique bounded solution
\begin{align*}
\hat{\psi}(p,x) &=  \int_{-\infty}^\infty G(p;x,y)\, g(y)\, dy \\
&= \int_{-\infty}^x \frac{f_+(x,p)f_-(y,p)}{W(p)}\, g(y)\, dy \\
&\qquad - \int_x^\infty \frac{f_+(y,p)f_-(x,p)}{W(p)}\, g(y)\, dy \\
\end{align*}
with constant Wronskian $W(p):= f_+(x,p)f_-'(x,p)-f_+'(x,p) f_-(x,p)$. Here $f_{\pm}(x,p)$ are the Jost solutions
\[
(\calH+p^2) f_{\pm}(\cdot,p)=0, \qquad f_{\pm}(x,p)\sim e^{\mp xp} \text{\ as\ }x\to \pm\infty
\]
The goal is now to obtain the expansion of $f_{\pm}(x,p)$ in small $p$, as this then yields the large time asymptotics of,
with arbitrary $p_0>0$,
\begin{equation}\label{eq:psiint}
\psi(t,x) =\frac{1}{2\pi i} \int_{-\infty}^\infty \int_{p_0-i\infty}^{p_0+i\infty} e^{tp}\;  G(p;x,y) \,dp\; g(y)\, dy
\end{equation}
via contour deformation and Watson's lemma.  By choice of potential $V$,
\begin{align*}
f_-(x,p) &= e^{px} \text{\ for\ }x\le -1\\
f_+(x,p) &= \frac{\pi i}{2} e^{a\pi i/2} H_a^{(1)}(ipx) \big(\frac{2px}{\pi}\big)^{\frac12} \text{\ for\ }x\ge 1
\end{align*}
One can continue $f_-(x,p)$ to the right of~$x=-1$ which yields an entire function in~$p$ for each fixed~$x$.
The nonresonance condition for $p=0$  means that $f_-(\cdot,0)$ and $x^{\frac12-a}$ are linearly independent at $x=1$.
Since $H_a^{(1)}=J_a+iY_a$ and -- up to constant factors --
\[
J_a(u)\sim u^a (1+O(u^2)),\quad Y_a(u) \sim u^{-a}(1+O(u^2))
\]
as $u\to0$ with analytic $O(u^2)$ (at least provided $a$ is not an integer), we conclude that
\[
W(p) = c(V)\, p^{\frac12-a} \big[ 1+O(p^2)+\tilde c(V)\, p^{2a}(1+O(p^2)) \big] \text{\ \ as\ }p\to0
\]
with $O(p^2)$ analytic in a neighborhood of $p=0$ and with $c(V)\ne0$.  This is obtained by computing $W(p)$ at $x=1$, say,
and by noting that  the most singular contribution to~$W(p)$ around $p=0$ is $c(V)\, p^{\frac12-a}$. By inspection, $c(V)=0$ is
the same as a zero energy resonance which is excluded.
If $a$ is a positive integer, then as $p\to0$
\[
W(p) = c(V)\, p^{\frac12-a} \big[ 1+O(p^2)+\tilde c(V)\, p^{2a}\log(p)(1+O(p^2)) \big] 
\]
For simplicity,
let us first freeze $x,y$, say $x=y=1$. Then one concludes from the preceding that
\begin{equation}\label{eq:p2a}
G(p;1,1)= C(V)\, p^{2a} \big[ 1+O(p^2)+\tilde c(V)\, p^{2a}(1+O(p^2)) \big]
\end{equation}
for small $p\in \C\setminus(-\infty,0]$, and analytic $O(p^2)$ around $p=0$, whereas for the case of $a\in\Z$,
\begin{align*}
G(p;1,1) &= C(V)\, p^{2a} \log(p) \big[ 1+O(p^2)\\
&\qquad +\tilde c(V)\, p^{2a} \log(p) (1+O(p^2)) \big]
\end{align*}
The stated decay law now follows  via Watson's lemma in a standard fashion. Note the special role of integer but odd $2a$ (which is the exceptional
case in the statement of the proposition): in that case
\eqref{eq:p2a} is analytic in small~$p$ whence one can push the contour in~\eqref{eq:psiint} through $p=0$ leading to exponential decay (at least as
far as the contribution of small~$p$ is concerned).

\noindent We now discuss the Watson lemma in more detail. First, we move the contour in~\eqref{eq:psiint} onto the imaginary axis:
\[
\psi(t,1) =\frac{1}{2\pi i}   \int_{ -i\infty}^{ i\infty} e^{tp}\;  G(p;1,1) \,dp
\]
The contribution due to $1-\chi(p)$ is shown via integration by parts to decay faster than any power of~$t$ (use that $G(iE;x,y) =   O(E^{-1})$
for large $E$, uniformly in $x,y$). On the other hand, for the contribution of~$\chi$
we retain only finitely many terms from~$G(p;1,1)$ with a remainder that is smooth enough around $p=0$ so as to yield the desired decay again by integration by parts.
Finally, the first remaining term is of the form (up to a constant factor $C(V)$)
\[
  \int_{-i\eps}^{i\eps} e^{tp}\;  p^{2a} \chi(p) \,dp
\]
We also have a $\log p$ factor if $a\in\Z$.  One now extends this to
\begin{equation}\label{eq:babyprice}
  \int_\gamma  e^{tp}\;  p^{2a}   \,dp
\end{equation}
where $\gamma$ is an curve which contains $[-i\eps,i\eps]$ and is asymptotic to $[0,e^{i\theta}\infty]$ and $[-e^{i\theta}\infty,0]$, respectively, and the ends.
Noting that the integrals we inserted here decrease like~$t^{-N}$  for any $N$ by integration by parts.
By Cauchy's theorem this is the same as
\[
2\sin(2a\pi) \int_0^\infty   e^{-tp}\;  p^{2a}  \,dp  = 2\sin(2a\pi) t^{-2a-1} \Gamma(2a+1)
\]
which is the decay rate stated in the proposition.
Note that if $a=\ell+\frac12$ then this term vanishes leading to the exceptional behavior stated above. On the other hand, if $a\in\Z$,
then this contribution does not vanish due to the $\log(p)$ factor.
Finally, we need to remove the restriction $x=y=1$. However, we have set up our argument in such a way that this modification is easy. First,
the contribution of $|p|>\eps$ is again shown to decay at an arbitrary rate via integration by parts.  Now this procedure brings down as many powers of $x,y$ as given by the
desired power of~$t^{-1}$.  Next, the contribution of the finitely many terms involving $p^{2a}$ etc.~is similar to before, and each one of these terms comes with a
corresponding weight in $x$ and~$y$. Finally, the remainder in~$G(p;x,y)$ after subtracting that initial segment is again sufficiently smooth in~$p$ and therefore
integration by parts yields the desired decay leading to another instance of requiring large~$\sigma$.
\end{proof}

The significance of this proposition lies with proximity of $V$ to the Regge-Wheeler potential. Indeed, we replaced the exponential tails on the left by zero,
and retained the inverse square tails on the right (ignoring the higher-order corrections). In case of the Regge-Wheeler potential one has  $a^2-\frac14=\ell(\ell+1)$ which implies that $a=\ell+\frac12$ which
is the {\em exceptional case of Proposition~\ref{prop:modeldecay}}. Formally speaking $2a+1=2\ell+2$ corresponds exactly to the decay rate of Theorem~\ref{thm:DSS1}, whereas the Price law $t^{-2\ell-3}$ is therefore seen to be a result of the $\frac{\log x}{x^3}$
correction to the far field in $V_{\ell,\sigma}$. In fact, it is shown in~\cite{CDST} that  the Price law is due to the nonanalytic term $p^{2a+1}\log p$ instead of~$p^{2a}$ in~\eqref{eq:babyprice}. To accomplish this, one derives an expansion of $f_+(x,p)$ in small~$p$ taking into account as many terms from~$V_{\ell,\sigma}$
as required for obtaining Price's law and the next few corrections to it. The route taken in~\cite{CDST} consists of a reduction of the Regge-Wheeler
potential to a normal form by means of a Liouville-Green transform. The normal form here consists of the potential without any corrections to the leading $\frac{\ell(\ell+1)}{x^2}$ decay. The branching around $p=0$ then results from the change of independent variable.  Arguing as in the previous proof then yields
the sharp $t^{-2\ell-3}$ Price law.

To conclude this survey, let us state the main local decay result from~\cite{DSS2}.

\begin{thm}
 \label{thm:DSS2}
The following decay estimates hold for solutions $\psi$ of~\eqref{eq_wavess} with data $\psi[0]=(\psi_0,\psi_1)$:
\begin{align}
 \| \la x\ra^{-\frac92-}  \psi(t) \|_{L^2} &\les \la t\ra^{-3} \| \la x\ra^{\frac92+} (\snabla^5 \del_x\psi_0, \snabla^5 \psi_0, \snabla^4\psi_1) \|_{L^2} \label{eq:decaywaveL2}  \\
\| \la x\ra^{-4}  \psi(t) \|_{L^\infty} &\les \la t\ra^{-3} \| \la x\ra^4 (\snabla^{10} \del_x\psi_0, \snabla^{10} \psi_0, \snabla^{9} \psi_1) \|_{L^1} \label{eq:decaywaveL1}
\end{align}
where $\snabla$ stands for the angular derivatives. The notation $a\pm$ stands for $a\pm\eps$ where $\eps>0$ is arbitrary (the choice determines the constants involved). Also, instead
of $(\snabla^{10},\snabla^9)$ in~\eqref{eq:decaywaveL1} one needs less, namely $(\snabla^{\sigma+1},\snabla^{\sigma})$ where $\sigma>8$ is arbitrary. Here $L^2:=L^2_x(\Rd;L^2(S^2))$, $L^1:=L^1_x(\Rd;L^1(S^2))$, and $L^\infty:=L^\infty_x(\Rd;L^\infty(S^2))$.
\end{thm}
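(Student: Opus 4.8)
The plan is to follow the blueprint of Theorem~\ref{thm2}, transplanted from surfaces of revolution to the Schwarzschild exterior: separate variables, prove a fiber estimate for each angular momentum with a constant growing only \emph{polynomially} in $\ell$, and then resum at the cost of finitely many angular derivatives. Concretely, I would expand $\psi_0,\psi_1$ into spherical harmonics $Y_{\ell,m}$, so that~\eqref{eq_wavess} decouples into the fiber equations governed by $\calH_{\ell,\sigma}=-\del_x^2+V_{\ell,\sigma}$ with $\sigma=1$, and $\psi(t)$ is the superposition of $\cos(t\sqrt{\calH_{\ell,\sigma}})\psi_{0,\ell,m}$ and $\frac{\sin(t\sqrt{\calH_{\ell,\sigma}})}{\sqrt{\calH_{\ell,\sigma}}}\psi_{1,\ell,m}$. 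Since $\sigma=1$ avoids the excluded triples $\{(0,0),(0,-3),(1,-3)\}$, Theorem~\ref{thm:DSS1} applies for every $\ell\ge1$ with the admissible exponent $\alpha=3$, while $\ell=0$ (cubic far field) is covered by Theorem~\ref{thm:DS} with $\alpha=3$, which gives exactly $t^{-3}$; this is why $\la t\ra^{-3}$ is the rate appearing in~\eqref{eq:decaywaveL2}--\eqref{eq:decaywaveL1}.

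Next I would upgrade the per-fiber \emph{pointwise} bounds to the weighted $L^2$ and $L^\infty$ fiber bounds needed for the resummation, of the schematic form
\[
\big\| \la x\ra^{-\sigma_0}\,\psi_{\ell,m}(t)\big\|_{L^2_x} \les \la t\ra^{-3}\,\la\ell\ra^{N}\,\big\| \la x\ra^{\sigma_0}\,(\psi_{0,\ell,m}',\psi_{0,\ell,m},\psi_{1,\ell,m})\big\|_{L^2_x},
\]
obtained from the oscillatory integral representation underlying Theorem~\ref{thm:DSS1} (or its $L^2$ variant). Summing in $m$ and using orthogonality of the $Y_{\ell,m}$ in $L^2(S^2)$, the factor $\la\ell\ra^{N}$ is absorbed into the angular-derivative weight, since $\snabla^k$ acts on $Y_{\ell,m}$ like $(\ell(\ell+1))^{k/2}$; bookkeeping of $N$ produces the $\snabla^5$ (and $\snabla^4$ on $\psi_1$) of~\eqref{eq:decaywaveL2}. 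For~\eqref{eq:decaywaveL1} one additionally needs a bound pointwise in the $S^2$ variable, recovered from the $L^2(S^2)$ bound by Sobolev embedding $H^{1+}(S^2)\hookrightarrow L^\infty(S^2)$ at the cost of $\sim 1$ more angular derivative, together with the $\la\ell\ra^{1+\eps}$ required for absolute convergence of the $\ell$-series; combined with the extra derivatives needed to pass from $L^2$ to $L^1\to L^\infty$ per fiber, this explains the $\snabla^{10}$ (reducible to $\snabla^{\sigma+1}$, $\sigma>8$) on the right of~\eqref{eq:decaywaveL1}.

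The crux is the polynomial-in-$\ell$ control of the fiber constants $C_{\ell,\alpha}$, for which Theorem~\ref{thm:DSS1} by itself only gives finiteness. Here I would rerun the fiber analysis semiclassically with $\hbar=(\ell+\tfrac12)^{-1}$, writing $\calH_{\ell,\sigma}=\hbar^{-2}(-\hbar^2\del_x^2+\hbar^2 V_{\ell,\sigma})$ so that the rescaled potential has a unique nondegenerate maximum at the photon sphere, of height $V_{\rm max}(0)+O(\hbar^2)$ — exactly the structure of Theorem~\ref{thm2}. Split the energy $E$ into low ($0<E<\eps$), intermediate non-barrier ($\eps<E<V_{\rm max}(0)-\eps$), barrier-top ($|E-V_{\rm max}(0)|<\eps$), and high ($E>100$). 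The high regime is essentially the free case via classical WKB. The low regime uses the two-parameter WKB of Theorem~\ref{thm:main} — the Langer transform of Lemma~\ref{lem:langer} and the Airy-comparison bases of Propositions~\ref{prop:AiryI}--\ref{prop:AiryII} — adapted to the inverse-square tail as $x\to+\infty$ and the (more favorable) exponential tail as $x\to-\infty$; here the $\tfrac{\hbar^2}{4}\la x\ra^{-2}$ correction is essential to keep the errors bounded as $E\to 0$. The intermediate non-barrier energies satisfy the classical nontrapping condition on their energy shells, hence a semiclassical Mourre estimate (Graf, Hislop--Nakamura) and, via the semiclassical analog of Theorem~\ref{thm:HSS}, the propagation bound~\eqref{eq:HSS}. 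At the barrier top the flow is classically trapping, but near a nondegenerate maximum $V(x)=1-\tfrac12\la Qx,x\ra+O(|x|^3)$ one still has $\{h,a\}=\xi^2+\la Qx,x\ra+O(|x|^3)\ge c>0$ by the uncertainty principle, so~\eqref{eq:1.6} survives (Briet--Combes--Duclos, Nakamura). Tracking the $\hbar$-powers through each regime — especially the Heisenberg-scale localization at the photon sphere — yields $C_{\ell,\alpha}\les\la\ell\ra^{N}$ with explicit $N$, which is what the resummation consumes.

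I expect the barrier-top regime to be the main obstacle: one needs a representation of the resolvent and spectral measure of $\calH(\hbar)$ that is uniform in small $\hbar$ \emph{and} valid for all energies down to $E=0$, so that integration by parts in the oscillatory integral loses only polynomially many powers of $\ell$; this means running the semiclassical Mourre plus Hunziker--Sigal--Soffer machinery with constants tracked explicitly in $\hbar$ and matching it smoothly to the low-energy WKB window, the overlap of the two perturbative regimes being precisely the delicate point (as in~\cite{DSS1,DSS2}). The sharper $t^{-2\ell-3}$ Price law, as opposed to the $t^{-2\ell-2}$ of Theorem~\ref{thm:DSS1}, would come from the $\tfrac{\log x}{x^3}$ correction in the Regge--Wheeler far field producing a $p^{2\ell+2}\log p$ branch (cf.\ Proposition~\ref{prop:modeldecay} and~\cite{CDST}), but this refinement is not needed for~\eqref{eq:decaywaveL2}--\eqref{eq:decaywaveL1}, which only require $t^{-3}$.
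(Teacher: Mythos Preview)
Your proposal is correct and follows essentially the same route as the paper: separation of variables, semiclassical fiber analysis with $\hbar\sim\ell^{-1}$, the four-way energy partition (low via two-parameter WKB, intermediate and barrier-top via semiclassical Mourre plus Hunziker--Sigal--Soffer, high via classical WKB), and resummation in~$\ell$ at the cost of angular derivatives. One point of emphasis is off: you call the exponential tail as $x\to-\infty$ ``more favorable,'' but the paper identifies precisely this asymmetry as the \emph{most significant complication}, since Theorem~\ref{thm:main} (from~\cite{CSST}) is tailored to inverse-square behavior at both ends and a separate WKB analysis for the exponentially decaying end had to be developed (this is the content of~\cite{CDST}); the issue is not the decay rate itself but that the Liouville--Green normal form and the turning-point structure are different, so the low-energy Jost solutions on the left must be built from scratch rather than read off from Propositions~\ref{prop:AiryI}--\ref{prop:AiryII}.
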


It is obtained by summation in~$\ell$ following the same line of reasoning that lead to Theorem~\ref{thm2} above. The most significant complication is due to
the asymmetry of the Regge-Wheeler potential: while the inverse square potential for $x\to\infty$ is covered by~\cite{CSST} as before, the exponentially decaying part
on the left requires another WKB analysis. We refer the reader to~\cite{DSS2} for the details.

\begin{acknowledgments}
This article is dedicated to the memory of Jean Bourgain. The author was partly supported by the National
Science Foundation grant DMS-1902691. The author thanks an anonymous referee for numerous helpful comments which improved the presentation. 
\end{acknowledgments}


\section*{Data Availability Statement}  Data sharing is not applicable to this article as no new data were created or analyzed in this study.

\end{document}